\documentclass[preprint,11pt,3p]{elsarticle}
\usepackage{amsmath,amsthm,amsfonts,amssymb,arydshln}
\usepackage{xcolor}
\usepackage{algorithm}
\usepackage{multirow}
\usepackage{mathdots}
\usepackage{multicol,graphicx}
\usepackage{mathrsfs}
\usepackage{booktabs}
\newcommand{\qprod}{\mathbin{\ast}}
\usepackage{comment}
\usepackage{subcaption}
\usepackage{threeparttable}
\usepackage{lipsum}
\usepackage{graphicx}
\usepackage{algpseudocode}
\usepackage{tikz-cd}
\algnewcommand{\Input}{\item[\textbf{Input:}]}
\algnewcommand{\Output}{\item[\textbf{Output:}]}

\usepackage{mathrsfs}
\usepackage{sectsty}
\definecolor{astral}{RGB}{46,116,181}
\sectionfont{\color{astral}}
\newtheorem{theorem}{Theorem}[section]
\newtheorem{lemma}[theorem]{Lemma}

\newtheorem{definition}[theorem]{Definition}
\newtheorem{example}[theorem]{Example}
\newtheorem{remark}[theorem]{Remark}
\usepackage{hyperref}
\usepackage{scalerel}
\usepackage{stackengine,wasysym}

\usepackage{tikz}
\pgfdeclarelayer{bg} 
\pgfsetlayers{bg,main}

\usepackage{tikz,xcolor}
\usetikzlibrary{positioning}
\definecolor{lime}{HTML}{A6CE39}
\definecolor{lightblue}{rgb}{0.0, 0.0, 0.5}
\DeclareRobustCommand{\orcidicon}{%
	\begin{tikzpicture}
	\draw[lime, fill=lime] (0,0)
	circle [radius=0.16]
	node[white] {{\fontfamily{qag}\selectfont \tiny ID}};
	\draw[white, fill=white] (-0.0625,0.095)
	circle [radius=0.007];
	\end{tikzpicture}
	\hspace{-2mm}
}
\foreach \x in {A, ..., Z}{%
	\expandafter\xdef\csname orcid\x\endcsname{\noexpand\href{https://orcid.org/\csname orcidauthor\x\endcsname}{\noexpand\orcidicon}}
}

\begin{document}
\begin{frontmatter}
\title{
A Fast Algorithm for Quaternion Tensor Singular Value Decomposition under the QT-Product with Applications to Color Video Processing}
\author{Daochang Zhang$^a$, Yue Zhao$^{b,*}$, Jingqian Li$^c$}
\address{
$^{a}$ College of Mathematics, Changchun Normal University, Changchun 130032, P.R. China.
\\\textit{E-mail}: \texttt{daochangzhang@126.com}\\

$^{b}$ Department of Math \& Stat, Georgia State University, Atlanta, GA 30303, USA.
\\\textit{E-mail}: \texttt{yuezhao0303@163.com}\\

$^{c}$ School of Sciences, Tianjin University of Technology, Tianjin, P.R. China.
\\\textit{E-mail}: \texttt{JingqianLi85@163.com}
}

\begin{abstract}
Based on the complex representation of quaternion matrices, this paper revisits the quaternion tensor singular value decomposition (QT-SVD) under the QT-product and establishes the corresponding $z$-block circulant matrix decomposition. We develop an efficient implementation of the QT-SVD. Numerical experiments show that this implementation is
substantially faster than QT-SVD based on the quaternion matrix SVD approach \cite[Theorem~2.3]{AMLzhang2022} while maintaining
high numerical accuracy. We further apply the QT-SVD to color video encryption and reconstruction. Experiments on five DAVIS video sequences show that the QT-SVD achieves higher reconstruction quality and shorter decomposition times than the quaternion T-SVD \cite{tsvdarxiv} under the same processing procedure. Both methods exhibit comparable ciphertext statistics and differential sensitivity, while the QT-SVD introduces less additional distortion under the tested input perturbations.
\end{abstract}

\begin{keyword}
Quaternion tensor singular value decomposition\sep QT-product\sep Quaternion tensors\sep Fast algorithm\sep Color video encryption and decryption

\MSC[2020] 65F30; 15A69; 15B33; 94A08 
\end{keyword}

\end{frontmatter}

\section{Introduction}
Quaternion algebra, introduced by Hamilton in 1843, is a four-dimensional associative and noncommutative extension of the complex
numbers. A quaternion can be written as
$q=a_0+a_1\mathbf{i}+a_2\mathbf{j}+a_3\mathbf{k},~ a_\ell\in\mathbb R\ (\ell=0,1,2,3)$.
This representation provides a unified description of multichannel data
and has been widely used in image processing, signal processing, and computer vision~\cite{MFBC2023,CHOU,SHOE,KUNZE}. In particular, the RGB channels of a color image can be encoded by the imaginary components of a
quaternion matrix, making QSVD a useful tool for color image processing \cite{Chenqsvd}.

For higher-order data such as video, tensor representations provide a
natural means of retaining correlations across multiple modes.
Classical tensor decompositions include the CP
decomposition~\cite{CRRROL1970}, Tucker decomposition~\cite{TUCKER1966},
HOSVD~\cite{Lathauwer2000}, and t-SVD~\cite{LAAKM2011}.
Quaternion tensor SVDs have subsequently been developed from different
algebraic viewpoints. Zheng et al.~\cite{ZMMAMLD} studied a quaternion
t-SVD using octonions, with the analysis mainly restricted to tensors
symmetric in the third mode. \cite{PanNg2024,tsvdarxiv} developed the quaternion tensor SVD under the T-product and computational algorithm. Qin et al.~\cite{AMLzhang2022} introduced the QT-product and established
the associated QT-SVD.

Although the QT-SVD provides a theoretical framework for quaternion tensor decomposition, its computational efficiency and performance in color video processing merit further
investigation. In particular, the use of direct quaternion matrix SVD may lead to substantial computational costs for large-scale tensor data.

Motivated by these considerations, the paper is organized as follows. Section~2 introduces the necessary preliminaries on quaternion tensors, the QT-product, and $z$-block circulant matrices.
Section~3 revisits the QT-SVD under the QT-product by combining the complex representation of quaternion matrices with the algebraic properties of $z$-block circulant matrices, and
establishes the corresponding $z$-block circulant matrix decomposition. It is worth noting that we remove an unnecessary requirement in the proof of \cite[Theorem~2.2]{AMLzhang2022}. We further develop a fast QT-SVD algorithm based on the complex representation of quaternion matrices and compare its computational efficiency and numerical accuracy with those of an implementation using quaternion matrix SVD \cite{AMLzhang2022}. Section~4 applies the QT-SVD to color video encryption and reconstruction and evaluates its performance against the quaternion T-SVD \cite{tsvdarxiv} in terms of reconstruction quality, ciphertext statistics, differential sensitivity, stability under input perturbations, and computational time.

%%%%%%%%%%%%%%%%%%%%%%%%%%%%%%%%%%%%%%%%%%%%%%%%%%
%%%%%%%%%%%%%%%%%%%%%%%%%%%%%%%%%%%%%%%%%%%%%%%%%%
\section{Preliminaries and key lemmas}
Let \(\mathbb R\), \(\mathbb C\), and \(\mathbb H\) denote the real field, the complex field, and the quaternion division ring, respectively. The quaternion algebra \(\mathbb{H}\) is a four-dimensional vector space over \(\mathbb{R}\) with basis \(\{1,\mathbf{i},\mathbf{j},\mathbf{k}\}\). Let \(\mathbb{H}^{n}\) denote the set of \(n\)-dimensional quaternion vectors, and let \(Q\in\mathbb{H}^{m\times n}\) denote an \(m\times n\) quaternion matrix. For a matrix $A$, the notation $A^*$ denotes its conjugate transpose.

Throughout the paper, \(\mathcal{A}\in\mathbb{H}^{n_1\times n_2\times n_3}\) denotes a third-order quaternion tensor, which is represented as
$
\mathcal{A}=\mathcal{A}_{\mathbf d}+\mathbf{j}\mathcal{A}_{\mathbf c},
$
where \(\mathcal{A}_{\mathbf d},\,\mathcal{A}_{\mathbf c}\in\mathbb{C}^{n_1\times n_2\times n_3}\). For a third-order tensor \(\mathcal{A}\), we adopt MATLAB-style indexing: \(\mathcal{A}(i,:,:)\), \(\mathcal{A}(:,j,:)\), and \(\mathcal{A}(:,:,k)\) denote the \(i\)-th horizontal, \(j\)-th lateral, and \(k\)-th frontal slices, respectively. For conciseness, the \(k\)-th frontal slice is denoted by \(\mathcal{A}^{(k)}:=\mathcal{A}(:,:,k)\). For a third-order quaternion tensor
$\mathcal A=(a_{ijk})\in\mathbb H^{n_1\times n_2\times n_3}$,
its Frobenius norm is defined by
\[
\|\mathcal A\|_F
=
\left(
\sum_{i=1}^{n_1}
\sum_{j=1}^{n_2}
\sum_{k=1}^{n_3}
|a_{ijk}|^2
\right)^{1/2},
\]
where $|a_{ijk}|$ denotes the modulus of the quaternion entry $a_{ijk}$.

For a quaternion matrix $A\in\mathbb H^{m\times n}$, write
$A=X+Y\mathbf j$, where $X,Y\in\mathbb C^{m\times n}$.
Following \cite{LAAzfzl1997}, the complex
representation of $A$ is defined by
\[
\chi(A)=
\begin{bmatrix}
X & Y\\
-\overline{Y} & \overline{X}
\end{bmatrix}
\in\mathbb C^{2m\times 2n},
\]
where the overline denotes complex conjugation.
The map $\chi$ is injective and satisfies
\[
\chi(AB)=\chi(A)\chi(B),
\qquad
\chi(A^*)=\chi(A)^*.
\]
Moreover, the singular values of $\chi(A)$
are precisely those of $A$, each with twice the multiplicity.

For the representation $A=A_d+\mathbf j A_c$ used above, we have $A=A_d+\overline{A_c}\mathbf j,$
since $\mathbf j z=\overline z\,\mathbf j$ for $z\in\mathbb C$. Thus, $X=A_d$ and $Y=\overline{A_c}$.

The block circulant matrix associated with \(\mathcal{A}\in \mathbb{H}^{n_1 \times n_2 \times n_3}\), denoted by $\mathtt{bcirc}({\mathcal{A})}$, has dimension \(n_1n_3\times n_2n_3\) and is defined as
\begin{align*}
    \mathtt{bcirc}(\mathcal{A}) :=
\begin{bmatrix}
\mathcal A^{(1)} &\mathcal A^{(n_3)} & \mathcal A^{(n_3-1)} & \cdots & \mathcal A^{(2)} \\
\mathcal A^{(2)} & \mathcal A^{(1)} & \mathcal A^{(n_3)} & \cdots & \mathcal A^{(3)} \\
\vdots & \vdots & \vdots & \ddots & \vdots \\
\mathcal A^{(n_3)} & \mathcal A^{(n_3-1)} & \mathcal A^{(n_3-2)} & \cdots & \mathcal A^{(1)}
\end{bmatrix}.
\end{align*}

The unfolding operator maps a third-order tensor
\(\mathcal{A}\in\mathbb{H}^{n_1\times n_2\times n_3}\)
to the first block column of
\(\mathtt{bcirc}(\mathcal{A})\). Its inverse operator reconstructs the original tensor from the unfolded representation. Specifically,
\[
\mathtt{unfold}(\mathcal A)
=
\begin{bmatrix}
\mathcal A^{(1)}\\
\mathcal A^{(2)}\\
\vdots\\
\mathcal A^{(n_3)}
\end{bmatrix},
\qquad
\mathtt{fold}(\mathtt{unfold}(\mathcal A))
=\mathcal A.
\]

Let $F_{n_3}\in\mathbb C^{n_3\times n_3}$ denote the
normalized discrete Fourier transform (DFT) matrix defined by
\[
F_{n_3}(i,j)
=n_3^{-1/2}\omega^{(i-1)(j-1)},
\qquad
\omega=e^{-2\pi\mathbf i/n_3},
\quad i,j=1,\ldots,n_3.
\]
For a quaternion tensor
$\mathcal A\in\mathbb H^{n_1\times n_2\times n_3}$,
its DFT along the third mode is defined by
\cite{AMLzhang2022}
\[
\widehat{\mathcal A}(i,j,:)
=
\sqrt{n_3}F_{n_3}\mathcal A(i,j,:),
\qquad
1\le i\le n_1,\quad 1\le j\le n_2.
\]
In MATLAB, the above transform is implemented by
\(fftq(\mathcal A,[~],3)\),
and its inverse by
\(ifftq(\hat{\mathcal A},[~],3)\).
Equivalently, the DFT and its inverse can be expressed in terms of the unfolding operator as
\begin{align}\label{zhao2.1}
\begin{aligned}
\mathtt{unfold}(\hat{\mathcal A})
&=
\sqrt{n_3}(F_{n_3}\otimes I_{n_1})
\mathtt{unfold}(\mathcal A),\\
\mathtt{unfold}(\mathcal A)
&=
\frac{1}{\sqrt{n_3}}
(F_{n_3}^{*}\otimes I_{n_1})
\mathtt{unfold}(\hat{\mathcal A}).
\end{aligned}
\end{align}

We introduce several basic definitions that will be used in the subsequent sections.
\begin{definition}\label{bcircz}
    Let $\mathcal A\in \mathbb H^{n_1\times r\times n_3}$, $\mathcal{A}_{\mathbf{d}}\in \mathbb C^{n_1\times r\times n_3}$, and $\mathcal{A}_{\mathbf{c}}\in \mathbb C^{n_1\times r\times n_3}$ satisfy that $\mathcal A=\mathcal{A}_{\mathbf{d}} + \mathbf{j} \mathcal{A}_{\mathbf{c}}\in \mathbb {H}^{n_1\times r\times n_3}.$
    Then the z-block circulant matrix of $\mathcal A$ is defined by $\mathtt{bcirc_z}(\mathcal A)$ as follows:
    \begin{align*}%\label{defbcirc_z}
    \mathtt{bcirc_z}(\mathcal A)=\mathtt{bcirc}(\mathcal{A}_{\mathbf{d}})+\mathbf{j}\mathtt{bcirc}(\mathcal{A}_{\mathbf{c}})(P_{n_3}\otimes I_r),
    \end{align*}
    where $P_{n_3}$ is the permutation matrix defined by \(P_{11}=1\) and
\begin{align}\label{permutationmatrix}
P_{ij}=
\begin{cases}
1,& i+j=n_3+2,\quad 2\le i,j\le n_3,\\
0,& \text{otherwise}.
\end{cases}
\end{align}
\end{definition}

From \cite{dczblock}, the $z$-block circulant matrix has the following form:
    \begin{align}\label{bcirczA}
        \mathtt{bcirc_z}(\mathcal A)=
        \begin{bmatrix}
        \mathcal{A}_{\mathbf{d}}^{(1)} + \mathbf{j}\mathcal{A}_{\mathbf{c}}^{(1)} & \mathcal{A}_{\mathbf{d}}^{(n_3)} +  \mathbf{j}\mathcal{A}_{\mathbf{c}}^{(2)} & \cdots & \mathcal{A}_{\mathbf{d}}^{(2)} + \mathbf{j}\mathcal{A}_{\mathbf{c}}^{(n_3)} \\
        \mathcal{A}_{\mathbf{d}}^{(2)} + \mathbf{j}\mathcal{A}_{\mathbf{c}}^{(2)} & \mathcal{A}_{\mathbf{d}}^{(1)} + \mathbf{j}\mathcal{A}_{\mathbf{c}}^{(3)} & \cdots & \mathcal{A}_{\mathbf{d}}^{(3)} + \mathbf{j}\mathcal{A}_{\mathbf{c}}^{(1)}  \\
        \vdots & \vdots & \ddots & \vdots \\
        \mathcal{A}_{\mathbf{d}}^{(n_3)} + \mathbf{j}\mathcal{A}_{\mathbf{c}}^{(n_3)} & \mathcal{A}_{\mathbf{d}}^{(n_3-1)} + \mathbf{j}\mathcal{A}_{\mathbf{c}}^{(1)} & \cdots & \mathcal{A}_{\mathbf{d}}^{(1)} + \mathbf{j}\mathcal{A}_{\mathbf{c}}^{(n_3-1)}
        \end{bmatrix}.
    \end{align}

In what follows, we present the definition of the QT-product.
\begin{definition}\cite{AMLzhang2022}\label{defAB}
Let $\mathcal{A}_{\mathbf{d}},~\mathcal{A}_{\mathbf{c}} \in \mathbb{C}^{n_1 \times r \times n_3}$, 
and consider tensors $\mathcal{A} = \mathcal{A}_{\mathbf{d}} + \mathbf{j}\mathcal{A}_{\mathbf{c}} 
\in \mathbb{H}^{n_1 \times r \times n_3}$ and 
$\mathcal{B} \in \mathbb{H}^{r \times n_2 \times n_3}$. 
Here, the symbol~$\otimes$ denotes the Kronecker product. We define
\begin{align*}
    \mathcal A*_Q\mathcal B\stackrel{.}{=}\mathtt{fold}\Big(\big(\mathtt{bcirc}(\mathcal A_{\mathbf{d}})+\mathbf j\mathtt{bcirc}(\mathcal A_{\mathbf{c}})\cdot (P_{n_3}\otimes I_r)\big)\cdot\mathtt{unfold}(\mathcal B)\Big)\in \mathbb{H}^{n_1 \times n_2 \times n_3}.
    \end{align*}
\end{definition}

We recall the conjugate transpose and the definitions of the identity, unitary, and f-diagonal quaternion tensors. For a complex tensor $\mathcal B\in\mathbb C^{n_1\times n_2\times n_3}$, its conjugate transpose $\mathcal B^*$ is defined by
\[
(\mathcal B^*)^{(1)}=(\mathcal B^{(1)})^*,
\qquad
(\mathcal B^*)^{(k)}
=(\mathcal B^{(n_3-k+2)})^*,
\quad k=2,\ldots,n_3,
\]
where the superscript $*$ on the right denotes the
conjugate transpose of a complex matrix.
\begin{definition}\cite{AMLzhang2022}
The conjugate transpose of a quaternion tensor 
$
\mathcal{A}=\mathcal A_{\mathbf{d}}+\mathbf{j}\mathcal A_{\mathbf{c}}
    \in\mathbb{H}^{n_1\times n_2\times n_3},
$
denoted by $\mathcal{A}^*\in\mathbb{H}^{\,n_2\times n_1\times n_3}$, is defined by
$
\mathtt{unfold}(\mathcal{A}^*)
    = \mathtt{unfold}(\mathcal A_{\mathbf{d}}^*)
      - (P_{n_3}\otimes I_{n_2})\,\mathtt{unfold}(\mathcal A_{\mathbf{c}}^*)\,\mathbf{j}.
$
\end{definition}

\begin{definition}\cite{AMLzhang2022}
The $n \times n \times n_3$ identity quaternion tensor $\mathcal{I}_{nnn_3}$ is the tensor whose first frontal slice is the identity matrix, and whose remaining frontal slices are zero matrices.
\end{definition}

\begin{definition}\cite{AMLzhang2022}\label{defunitary}
A quaternion tensor
$\mathcal U\in\mathbb H^{n\times n\times n_3}$ is called unitary if
$
\mathcal U^{*} *_Q \mathcal U
=
\mathcal U *_Q \mathcal U^{*}
=
\mathcal I_{n\times n\times n_3}.
$
\end{definition}

\begin{definition}\cite{AMLzhang2022,ZMMAMLD}\label{fdiagonal}
The tensor \(\mathcal{P} \in \mathbb{H}^{n_1 \times n_2 \times n_3}\) is called an \emph{f-diagonal} quaternion tensor if each of its frontal slices is a diagonal quaternion matrix.
\end{definition}
    
We first recall the Fourier block diagonalization of complex block circulant matrices. For
$\mathcal A\in\mathbb C^{n_1\times n_2\times n_3}$,
we have \cite{gloub}
\[
(F_{n_3}\otimes I_{n_1})
\mathtt{bcirc}(\mathcal A)
(F_{n_3}^{*}\otimes I_{n_2})
=
\operatorname{diag}
\bigl(
\widehat{\mathcal A}^{(1)},
\ldots,
\widehat{\mathcal A}^{(n_3)}
\bigr)
:=\operatorname{diag}(\widehat{\mathcal A}),
\]
where $\widehat{\mathcal A}^{(k)}$ denotes
the $k$-th frontal slice of $\widehat{\mathcal A}$.

The following lemmas give the corresponding Fourier block diagonalization and algebraic properties of quaternion $z$-block circulant matrices.

\begin{lemma}\cite{dczblock}\label{zdc}
Let $\mathcal{A}_{\mathbf{d}}$ and $\mathcal{A}_{\mathbf{c}}\in \mathbb C^{n_1\times n_2\times n_3}$ satisfy that $\mathcal A=\mathcal{A}_{\mathbf{d}} + \mathbf{j} \mathcal{A}_{\mathbf{c}}\in\mathbb{H}^{{n_1\times n_2\times n_3}}.$ Denote that $\hat{\mathcal A}$ is the DFT of $\mathcal{A}$. Then, 
$
    (F_{n_3}\otimes I_{n_1})\mathtt{bcirc_z}(\mathcal A)(F_{n_3}^*\otimes I_{n_2})=diag(\widehat{\mathcal A}).
$
\end{lemma}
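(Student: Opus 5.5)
The plan is to split $\mathtt{bcirc_z}(\mathcal A)$ into its complex part and its $\mathbf j$-part, as in Definition~\ref{bcircz}, and apply the classical Fourier block diagonalization of complex block circulant matrices to each part separately. The only subtlety is that $\mathbf j$ does not commute with complex scalars: $z\mathbf j=\mathbf j\overline z$. So moving $F_{n_3}\otimes I_{n_1}$ past $\mathbf j$ conjugates it. The permutation $P_{n_3}$ is there precisely to absorb this conjugation.

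\emph{First}, identify $\widehat{\mathcal A}$ in complex terms. For each tube,
\[
\sqrt{n_3}F_{n_3}\bigl(\mathcal A_{\mathbf d}(i,j,:)+\mathbf j\mathcal A_{\mathbf c}(i,j,:)\bigr)
=\sqrt{n_3}F_{n_3}\mathcal A_{\mathbf d}(i,j,:)+\mathbf j\,\sqrt{n_3}\,\overline{F_{n_3}}\,\mathcal A_{\mathbf c}(i,j,:).
\]
Hence $\widehat{\mathcal A}^{(k)}=\widehat{\mathcal A_{\mathbf d}}^{(k)}+\mathbf j\,\overline{\widehat{\overline{\mathcal A_{\mathbf c}}}^{(k)}}$, where the hat on a complex tensor is the usual DFT. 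Equivalently, $\operatorname{diag}(\widehat{\mathcal A})=\operatorname{diag}(\widehat{\mathcal A_{\mathbf d}})+\mathbf j\,\overline{\operatorname{diag}(\widehat{\overline{\mathcal A_{\mathbf c}}})}$.

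\emph{Second}, record the elementary identities for $F=F_{n_3}$. Since $\overline{\omega^{(i-1)(j-1)}}=\omega^{-(i-1)(j-1)}$ and indices are taken mod $n_3$, we have $\overline F=P_{n_3}F=FP_{n_3}$. Since $F$ is symmetric, $F^*=\overline F$. Together with $P_{n_3}^2=I$, these give $P_{n_3}F^*=F$.

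\emph{Third}, compute the two pieces.
\begin{itemize}
\item The complex part gives $(F\otimes I_{n_1})\mathtt{bcirc}(\mathcal A_{\mathbf d})(F^*\otimes I_{n_2})=\operatorname{diag}(\widehat{\mathcal A_{\mathbf d}})$ directly from the classical result.
\item For the $\mathbf j$-part, pulling $\mathbf j$ to the left gives
\[
(F\otimes I_{n_1})\,\mathbf j\,\mathtt{bcirc}(\mathcal A_{\mathbf c})(P_{n_3}\otimes I_{n_2})(F^*\otimes I_{n_2})
=\mathbf j\,(\overline F\otimes I_{n_1})\,\mathtt{bcirc}(\mathcal A_{\mathbf c})(F\otimes I_{n_2}),
\]
using $P_{n_3}F^*=F$ (the identity block is $I_r=I_{n_2}$ here). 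The right-hand factor equals
\[
\overline{(F\otimes I_{n_1})\,\mathtt{bcirc}(\overline{\mathcal A_{\mathbf c}})\,(F^*\otimes I_{n_2})}
=\overline{\operatorname{diag}(\widehat{\overline{\mathcal A_{\mathbf c}}})},
\]
because $\overline{\mathtt{bcirc}(\mathcal B)}=\mathtt{bcirc}(\overline{\mathcal B})$.
\end{itemize}

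Adding the two pieces and comparing with the first step yields $\operatorname{diag}(\widehat{\mathcal A})$. The main obstacle I expect is the bookkeeping of conjugations: the noncommutativity of $\mathbf j$, and the identities $\overline F=PF=FP$ and $F^*=\overline F$. I would verify these entrywise before assembling the argument.
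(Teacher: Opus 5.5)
Your proof is correct. The paper does not prove this lemma; it quotes it from \cite{dczblock}, just as it quotes the complex block diagonalization from \cite{gloub}. There is therefore no in-paper argument to compare against.

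Your route reduces the quaternion statement to that classical complex result, and each step holds. In the first step, the $\mathbf j$-component of $\widehat{\mathcal A}$ is transformed by $\overline{F_{n_3}}$, because $F_{n_3}$ acts on the left and $z\mathbf j=\mathbf j\overline z$; this gives $\overline{\widehat{\overline{\mathcal A_{\mathbf c}}}}$. In the $\mathbf j$-part of $\mathtt{bcirc_z}(\mathcal A)$:
\begin{itemize}
\item the left factor $F_{n_3}\otimes I_{n_1}$ passes through $\mathbf j$ as $\overline{F_{n_3}}\otimes I_{n_1}$;
\item the right factor $(P_{n_3}\otimes I_{n_2})(F_{n_3}^*\otimes I_{n_2})$ is not conjugated, since $\mathbf j$ sits to the left of the complex block, and it collapses to $F_{n_3}\otimes I_{n_2}$ via $P_{n_3}F_{n_3}^*=F_{n_3}$;
\item the result $(\overline{F_{n_3}}\otimes I_{n_1})\mathtt{bcirc}(\mathcal A_{\mathbf c})(F_{n_3}\otimes I_{n_2})$ is the conjugate of the classical diagonalization of $\mathtt{bcirc}(\overline{\mathcal A_{\mathbf c}})$, which matches the first step.
\end{itemize}
The identities $\overline{F_{n_3}}=P_{n_3}F_{n_3}=F_{n_3}P_{n_3}$ and $F_{n_3}^*=\overline{F_{n_3}}$ do hold under the paper's convention $P_{ij}=1$ for $i+j=n_3+2$. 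The classical diagonalization also holds with the normalization $\widehat{\mathcal A}(i,j,:)=\sqrt{n_3}F_{n_3}\mathcal A(i,j,:)$.

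A side benefit of your argument is that it shows exactly why the permutation $P_{n_3}$ appears in Definition~\ref{bcircz}: it absorbs the conjugation that $\mathbf j$ induces on the Fourier factor.
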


\begin{lemma}\cite{dczblock}\label{newrelation}
    Let $\mathcal A\in \mathbb H^{n_1\times r\times n_3},$ $\mathcal B\in \mathbb H^{r\times n_2\times n_3},$ and $\mathcal C\in \mathbb H^{n_1\times n_2\times n_3}.$  Then
    \begin{align*}
       \mathcal A*_Q\mathcal B=\mathcal C\Longleftrightarrow \mathtt{bcirc_z}(\mathcal A)\mathtt{bcirc_z}(\mathcal B)=\mathtt{bcirc_z}(\mathcal C).
    \end{align*}
    \end{lemma}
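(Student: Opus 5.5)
The plan is to use the Fourier block diagonalization of Lemma~\ref{zdc} to turn both sides of the equivalence into a statement about frontal slices in the Fourier domain. Two simple facts will be used throughout. First, by the explicit form \eqref{bcirczA}, the first block column of $\mathtt{bcirc_z}(\mathcal X)$ is exactly $\mathtt{unfold}(\mathcal X)$ for any quaternion tensor $\mathcal X$. In other words,
\[
\mathtt{bcirc_z}(\mathcal X)(e_1\otimes I)=\mathtt{unfold}(\mathcal X),
\]
where $e_1$ is the first standard basis vector of $\mathbb R^{n_3}$. Second, comparing with Definition~\ref{defAB}, we have $\mathcal A*_Q\mathcal B=\mathtt{fold}\big(\mathtt{bcirc_z}(\mathcal A)\,\mathtt{unfold}(\mathcal B)\big)$.

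\emph{($\Leftarrow$)} This direction is immediate. Suppose $\mathtt{bcirc_z}(\mathcal A)\mathtt{bcirc_z}(\mathcal B)=\mathtt{bcirc_z}(\mathcal C)$. Multiply both sides on the right by $e_1\otimes I_{n_2}$ and use the first fact twice. This gives $\mathtt{bcirc_z}(\mathcal A)\,\mathtt{unfold}(\mathcal B)=\mathtt{unfold}(\mathcal C)$. Applying $\mathtt{fold}$ then yields $\mathcal A*_Q\mathcal B=\mathcal C$.

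\emph{($\Rightarrow$)} Suppose $\mathtt{unfold}(\mathcal C)=\mathtt{bcirc_z}(\mathcal A)\mathtt{bcirc_z}(\mathcal B)(e_1\otimes I_{n_2})$. Multiply on the left by $F_{n_3}\otimes I_{n_1}$ and insert $(F_{n_3}^*\otimes I_r)(F_{n_3}\otimes I_r)=I$ between the two factors. Lemma~\ref{zdc} then gives
\[
(F_{n_3}\otimes I_{n_1})\mathtt{unfold}(\mathcal C)=\operatorname{diag}(\widehat{\mathcal A})\operatorname{diag}(\widehat{\mathcal B})(F_{n_3}\otimes I_{n_2})(e_1\otimes I_{n_2}).
\]
Since $F_{n_3}e_1=n_3^{-1/2}\mathbf 1$ has real entries, the last factor equals $n_3^{-1/2}(\mathbf 1\otimes I_{n_2})$. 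By \eqref{zhao2.1}, the left side equals $n_3^{-1/2}\mathtt{unfold}(\widehat{\mathcal C})$. Comparing blocks gives $\widehat{\mathcal C}^{(k)}=\widehat{\mathcal A}^{(k)}\widehat{\mathcal B}^{(k)}$ for every $k$. Applying Lemma~\ref{zdc} once more, now to $\mathcal C$, we obtain
\[
\mathtt{bcirc_z}(\mathcal C)=(F_{n_3}^*\otimes I_{n_1})\operatorname{diag}(\widehat{\mathcal A})\operatorname{diag}(\widehat{\mathcal B})(F_{n_3}\otimes I_{n_2}).
\]
By Lemma~\ref{zdc} for $\mathcal A$ and $\mathcal B$ separately, the right side is $\mathtt{bcirc_z}(\mathcal A)\mathtt{bcirc_z}(\mathcal B)$.

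The main obstacle is noncommutativity. The entries of $F_{n_3}$ are complex, and $\mathbf j z=\overline z\mathbf j$, so complex scalars cannot be moved freely past quaternion blocks. The plan avoids such moves in two ways. The matrices $F_{n_3}\otimes I$ only ever act through the identity of Lemma~\ref{zdc}, exactly as stated there. The only scalar that must pass a quaternion block, $F_{n_3}e_1$, is real. One should also check that $\mathrm{diag}(\widehat{\mathcal A})\mathrm{diag}(\widehat{\mathcal B})$ is multiplied blockwise as quaternion matrices, which requires only associativity. With these points verified, the argument goes through without further computation.
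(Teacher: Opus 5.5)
Your proof is correct. The paper gives no proof of this lemma; it is quoted from \cite{dczblock}. So there is no in-paper argument to compare against, and yours stands as a self-contained derivation from Definition~\ref{defAB}, \eqref{bcirczA}, \eqref{zhao2.1} and Lemma~\ref{zdc}. You found the places where care is needed. First, the first block column of $\mathtt{bcirc_z}(\mathcal X)$ is $\mathtt{unfold}(\mathcal X)$ because $P_{n_3}e_1=e_1$. Second, the factor $n_3^{-1/2}$ produced by $F_{n_3}e_1$ is real. That is what lets you identify the left scalar multiple $n_3^{-1/2}\widehat{\mathcal C}^{(k)}$ from \eqref{zhao2.1} with the right scalar multiple $\widehat{\mathcal A}^{(k)}\widehat{\mathcal B}^{(k)}n_3^{-1/2}$. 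Third, inverting Lemma~\ref{zdc} uses only associativity of matrix products over $\mathbb H$. You should also add a sentence saying that relying on Lemma~\ref{zdc} is not circular. That lemma follows directly from Definition~\ref{bcircz} and the complex Fourier diagonalization. It uses $(F_{n_3}\otimes I)\mathbf j=\mathbf j(\overline{F_{n_3}}\otimes I)$, $\overline{F_{n_3}}=F_{n_3}^*=P_{n_3}F_{n_3}$ and $F_{n_3}^2=P_{n_3}$, and makes no appeal to products of $z$-block circulant matrices.

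For comparison, a Fourier-free route also exists. The $(p,q)$ block of $\mathtt{bcirc_z}(\mathcal X)$ is $\mathcal X_{\mathbf d}^{(p-q+1)}+\mathbf j\,\mathcal X_{\mathbf c}^{(p+q-1)}$, with superscripts read cyclically modulo $n_3$. Use $X\mathbf j=\mathbf j\overline X$ for complex $X$ and reindex the sum over the inner block index. Then the complex part of the $(p,q)$ block of $\mathtt{bcirc_z}(\mathcal A)\mathtt{bcirc_z}(\mathcal B)$ depends only on $p-q$, and its $\mathbf j$-part only on $p+q$. Hence the product equals $\mathtt{bcirc_z}(\mathcal D)$ for some $\mathcal D$, and reading off the first block column gives $\mathcal D=\mathcal A*_Q\mathcal B$. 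This gives $\mathtt{bcirc_z}(\mathcal A)\mathtt{bcirc_z}(\mathcal B)=\mathtt{bcirc_z}(\mathcal A*_Q\mathcal B)$, and the equivalence follows from injectivity of $\mathtt{bcirc_z}$. That argument is purely combinatorial and needs no transform. Yours trades the index bookkeeping for Lemma~\ref{zdc}. As a by-product it yields the slice-wise identity $\widehat{\mathcal C}^{(k)}=\widehat{\mathcal A}^{(k)}\widehat{\mathcal B}^{(k)}$, which is the property underlying the slice-by-slice computation in Algorithm~\ref{alg:qt-fast}.
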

    
\begin{lemma}\cite{dczblock}\label{ct}
    Let $\mathcal A\in\mathbb H^{n\times n\times n_3}$ and the conjugate transpose of $\mathcal A$ is defined by $\mathcal A^*$. Then $\mathtt{bcirc_z}(\mathcal A^*)=\mathtt{bcirc_z}(\mathcal A)^*.$
    \end{lemma}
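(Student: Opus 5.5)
Lemma~\ref{ct} asks for $\mathtt{bcirc_z}(\mathcal A^*)=\mathtt{bcirc_z}(\mathcal A)^*$. I would prove it by working blockwise with the explicit form \eqref{bcirczA}. Set $n_1=n_2=n$ and write $C_k$ for the $k$-th frontal slice of $\mathcal A_{\mathbf c}$, with indices taken modulo $n_3$. Then the $(p,q)$ block of $\mathtt{bcirc_z}(\mathcal A)$ is
\[
\mathcal A_{\mathbf d}^{(p-q+1)}+\mathbf j\, C_{p+q-1}.
\]
This follows from the definition, since $\mathtt{bcirc}(\mathcal A_{\mathbf c})(P_{n_3}\otimes I)$ permutes block columns by $q\mapsto n_3+2-q$ (with $1\mapsto 1$). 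The $(p,q)$ block of $\mathtt{bcirc_z}(\mathcal A)^*$ is the quaternion conjugate transpose of the $(q,p)$ block, namely
\[
(\mathcal A_{\mathbf d}^{(q-p+1)})^*+(\mathbf j C_{q+p-1})^*.
\]

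The key identity is $(\mathbf j C)^*=C^*\bar{\mathbf j}=-C^*\mathbf j=-\mathbf j\,C^{T}$. This uses $z\mathbf j=\mathbf j\bar z$ for $z\in\mathbb C$, applied entrywise: $\overline{C}^{T}\mathbf j=\mathbf j\, C^{T}$. Hence the target block is
\[
(\mathcal A_{\mathbf d}^{(q-p+1)})^*-\mathbf j\,(C_{p+q-1})^{T}.
\]

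Next I compute $\mathcal A^*=\mathcal D+\mathbf j\,\mathcal E$ from its definition. The first part is $\mathcal D=\mathcal A_{\mathbf d}^*$ (complex tensor transpose), so $\mathcal D^{(k)}=(\mathcal A_{\mathbf d}^{(n_3-k+2)})^*$. For the second term, $(P\otimes I)\mathtt{unfold}(\mathcal A_{\mathbf c}^*)$ reverses the slice indices $k\mapsto n_3+2-k$. Its $k$-th slice is therefore $(\mathcal A_{\mathbf c}^*)^{(n_3+2-k)}=C_k^*$. Converting $-C_k^*\mathbf j$ to the $\mathbf j$-left form gives $-\mathbf j\,C_k^{T}$, so $\mathcal E^{(k)}=-C_k^{T}$. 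Substituting into \eqref{bcirczA} for $\mathcal A^*$, the $(p,q)$ block of $\mathtt{bcirc_z}(\mathcal A^*)$ is
\[
(\mathcal A_{\mathbf d}^{(n_3-(p-q+1)+2)})^*-\mathbf j\, C_{p+q-1}^{T}.
\]
Since $n_3-(p-q+1)+2\equiv q-p+1 \pmod{n_3}$, this matches the target block exactly.

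The main obstacle is purely bookkeeping. The index arithmetic of $P_{n_3}$ must be handled consistently: it fixes $1$ and reverses the rest, which is exactly negation modulo $n_3$ on the shifted index. The side of $\mathbf j$ also needs care, because the definition of $\mathcal A^*$ places $\mathbf j$ on the right while $\mathtt{bcirc_z}$ uses the left form. I would first verify the block formula for $\mathtt{bcirc_z}$ as a small claim, then the slice formula for $\mathcal A^*$, and finally compare blocks.

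As a cross-check, one could instead use Lemma~\ref{zdc}: conjugating by the unitary $F_{n_3}\otimes I$ reduces the claim to $\widehat{\mathcal A^*}^{(k)}=(\widehat{\mathcal A}^{(k)})^*$. However, noncommutativity of $\mathbf j$ with $F$ makes that route no simpler, so I would use the direct block comparison.
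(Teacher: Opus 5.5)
Your proof is correct. The paper gives no argument for this lemma; it is quoted from \cite{dczblock}. There is therefore no in-paper route to compare with, and your direct blockwise verification is a complete, self-contained proof.

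Each step checks out:
\begin{itemize}
\item The block formula $\mathcal A_{\mathbf d}^{(p-q+1)}+\mathbf j\,\mathcal A_{\mathbf c}^{(p+q-1)}$ reproduces \eqref{bcirczA}.
\item The $\mathbf j$-part is block anti-circulant: its $(p,q)$ block depends only on $p+q$. Conjugate transposition therefore keeps the slice index and only transposes blocks, using $(\mathbf j C)^*=-C^*\mathbf j=-\mathbf j\,C^{T}$.
\item The reversal coming from $P_{n_3}$ cancels the reversal built into the complex tensor transpose. Hence $(\mathcal A^*)_{\mathbf c}^{(k)}=-(\mathcal A_{\mathbf c}^{(k)})^{T}$ with no index shift.
\end{itemize}
The only tacit step is that the splitting $\mathbb H=\mathbb C\oplus\mathbf j\,\mathbb C$ is unique. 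This is what lets you read off $(\mathcal A^*)_{\mathbf d}$ and $(\mathcal A^*)_{\mathbf c}$ before applying $\mathtt{bcirc_z}$, and it is harmless. Nothing in your computation uses $n_1=n_2$, so the argument also proves the rectangular case, with $P_{n_3}\otimes I_{n_1}$ appearing in $\mathtt{bcirc_z}(\mathcal A^*)$.

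The alternative you mention goes through Lemma~\ref{zdc}. It reduces the claim to $\widehat{\mathcal A^*}^{(k)}=(\widehat{\mathcal A}^{(k)})^*$, but it then needs $F_{n_3}\mathbf j=\mathbf j\,\overline{F_{n_3}}$ and $\overline{F_{n_3}}=P_{n_3}F_{n_3}$, which is the same index reversal in another guise. Your route is more elementary and does not depend on the diagonalization lemma at all. The Fourier route's advantage is that it yields the slice-wise identity $\widehat{\mathcal A^*}^{(k)}=(\widehat{\mathcal A}^{(k)})^*$ as a by-product. That identity is convenient in arguments such as the QT-SVD proof, which work with $\operatorname{diag}(\widehat{\mathcal V})^*$ in the Fourier domain.
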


\begin{lemma}\cite{dczblock}\label{unitary}
        A quaternion tensor $\mathcal A\in\mathbb{H}^{n\times n\times n_3}$ is unitary if and only if $\mathtt{bcirc_z}(\mathcal A)$ is unitary.
    \end{lemma}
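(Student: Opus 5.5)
The plan is to turn both unitarity conditions into identities between $z$-block circulant matrices and then compare them using Lemma~\ref{newrelation} and Lemma~\ref{ct}. Before doing so, I need two small facts about the map $\mathcal A\mapsto \mathtt{bcirc_z}(\mathcal A)$.

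The first fact is the value of $\mathtt{bcirc_z}$ on the identity tensor. Write $\mathcal I_{nnn_3}=\mathcal I_{\mathbf d}+\mathbf j\,\mathcal I_{\mathbf c}$. Here $\mathcal I_{\mathbf d}$ is the complex identity tensor and $\mathcal I_{\mathbf c}=0$. Definition~\ref{bcircz} then gives
$\mathtt{bcirc_z}(\mathcal I_{nnn_3})=\mathtt{bcirc}(\mathcal I_{\mathbf d})=I_{nn_3}$,
since the block circulant matrix built from the slices $I_n,0,\ldots,0$ is the identity.

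The second fact is that $\mathtt{bcirc_z}$ is injective. By \eqref{bcirczA}, the first block column of $\mathtt{bcirc_z}(\mathcal A)$ consists of the blocks $\mathcal A_{\mathbf d}^{(k)}+\mathbf j\mathcal A_{\mathbf c}^{(k)}=\mathcal A^{(k)}$ for $k=1,\ldots,n_3$. This block column is exactly $\mathtt{unfold}(\mathcal A)$, so $\mathcal A=\mathtt{fold}$ of it and $\mathcal A$ is recovered from $\mathtt{bcirc_z}(\mathcal A)$. In fact this step is already contained in the equivalence of Lemma~\ref{newrelation}, so I will simply lean on that lemma.

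For the main argument, fix $\mathcal U\in\mathbb H^{n\times n\times n_3}$. By Lemma~\ref{newrelation} with $\mathcal C=\mathcal I_{nnn_3}$,
\[
\mathcal U^*\ast_Q\mathcal U=\mathcal I_{nnn_3}\iff \mathtt{bcirc_z}(\mathcal U^*)\,\mathtt{bcirc_z}(\mathcal U)=\mathtt{bcirc_z}(\mathcal I_{nnn_3})=I_{nn_3}.
\]
Lemma~\ref{ct} gives $\mathtt{bcirc_z}(\mathcal U^*)=\mathtt{bcirc_z}(\mathcal U)^*$, so the right-hand side reads $\mathtt{bcirc_z}(\mathcal U)^*\mathtt{bcirc_z}(\mathcal U)=I_{nn_3}$. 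In the same way,
\[
\mathcal U\ast_Q\mathcal U^*=\mathcal I_{nnn_3}\iff \mathtt{bcirc_z}(\mathcal U)\,\mathtt{bcirc_z}(\mathcal U)^*=I_{nn_3}.
\]
Taking both equivalences together, $\mathcal U$ is unitary in the sense of Definition~\ref{defunitary} if and only if the square quaternion matrix $\mathtt{bcirc_z}(\mathcal U)$ satisfies both $M^*M=I$ and $MM^*=I$, i.e.\ is unitary.

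There is no serious obstacle; the only point needing care is that both directions of Lemma~\ref{newrelation} are genuinely used. In the direction from the matrix to the tensor, the identity $I_{nn_3}$ must be recognised as $\mathtt{bcirc_z}(\mathcal I_{nnn_3})$, which is the computation above. The argument also does not need the fact that a one-sided inverse of a square quaternion matrix is two-sided, because both products are treated explicitly.
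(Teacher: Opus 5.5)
Your proof is correct: you compute $\mathtt{bcirc_z}(\mathcal I_{nnn_3})=I_{nn_3}$, then apply Lemma~\ref{newrelation} in both directions together with Lemma~\ref{ct} to each of $\mathcal U^**_Q\mathcal U$ and $\mathcal U*_Q\mathcal U^*$, and this yields exactly the two matrix identities that define unitarity of $\mathtt{bcirc_z}(\mathcal U)$. The paper gives no proof of its own and simply quotes the lemma from \cite{dczblock}; your argument is the natural derivation from the cited Lemmas~\ref{newrelation} and~\ref{ct}, so there is no different route to compare against.
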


%%%%%%%%%%%%%%%%%%%%%%%%%%%%%%%%%%%%%%%%%%%%%%%%%%%%%%%%%%%%%%%%%%%%%%%%%%%%%%%%%%%%%%%%%%%%%%%%%%%%
\section{Quaternion Tensor Singular Value Decomposition and Its Fast Implementation}
\subsection{QT-SVD under the QT-Product}
This subsection revisits the quaternion tensor singular value decomposition (QT-SVD) under the QT-product and establishes the corresponding $z$-block circulant matrix decomposition. The proof also eliminates an unnecessary assumption that $\operatorname{diag}(\widehat{\mathcal S})$
is diagonal in \cite[Theorem~2.2]{AMLzhang2022}.
\begin{theorem}[\textbf{QT-SVD}]
    Let $\mathcal{A}\in\mathbb{H}^{n_1\times n_2\times n_3}$. Then there exist unitary quaternion tensors $\mathcal U\in\mathbb{H}^{n_1\times n_1\times n_3}$, $\mathcal V\in\mathbb{H}^{n_2\times n_2\times n_3}$, and an f-diagonal tensor $\mathcal S\in\mathbb{H}^{n_1\times n_2\times n_3}$ such that $\mathcal{A}$ can be factored  as
    \begin{align*}
        \mathcal A=\mathcal U*_Q\mathcal S*_Q\mathcal V^*,
    \end{align*}
    where $\mathcal S$ is the singular-value tensor, referred to as the core tensor in the encryption procedure.
    Meanwhile, 
    \begin{align*}
        \mathtt{bcirc_z}(\mathcal A)&=\mathtt{bcirc_z}(\mathcal U)\mathtt{bcirc_z}(\mathcal S)\mathtt{bcirc_z}(\mathcal V)^*,
        \end{align*} 
        where $\mathtt{bcirc_z}(\mathcal U)$ and $\mathtt{bcirc_z}(\mathcal V)$ are both unitary and every block of $\mathtt{bcirc_z}(\mathcal S)$
is diagonal.
\end{theorem}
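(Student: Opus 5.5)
We work in the Fourier domain given by Lemma~\ref{zdc}, use the complex representation $\chi$ to compute slice-wise SVDs, and transfer the factors back to the tensor level with Lemmas~\ref{newrelation}, \ref{ct} and \ref{unitary}. By Lemma~\ref{zdc}, $(F_{n_3}\otimes I_{n_1})\mathtt{bcirc_z}(\mathcal A)(F_{n_3}^*\otimes I_{n_2})=\operatorname{diag}(\widehat{\mathcal A})$, where each $\widehat{\mathcal A}^{(k)}\in\mathbb H^{n_1\times n_2}$. For each $k$ we take a quaternion SVD $\widehat{\mathcal A}^{(k)}=U_kS_kV_k^*$. It can be obtained from the complex SVD of $\chi(\widehat{\mathcal A}^{(k)})$: the singular values of $\chi$ come in pairs, and the singular subspaces can be chosen in the structured form $\chi(U_k)$, $\chi(V_k)$. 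Here $U_k,V_k$ are quaternion unitary and $S_k$ is real, nonnegative and diagonal. Since $\chi$ is injective and multiplicative, this gives the quaternion factorization directly.

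Next we define $\mathcal U,\mathcal S,\mathcal V$ as the tensors whose DFTs have frontal slices $U_k,S_k,V_k$. Concretely, we set $\mathtt{unfold}(\mathcal U)=\frac{1}{\sqrt{n_3}}(F_{n_3}^*\otimes I_{n_1})\mathtt{unfold}(\widehat{\mathcal U})$ as in \eqref{zhao2.1}, and similarly for $\mathcal S$ and $\mathcal V$. Lemma~\ref{zdc}, applied to each of these tensors, gives $\mathtt{bcirc_z}(\mathcal U)=(F^*\otimes I)\operatorname{diag}(U_k)(F\otimes I)$, and likewise for $\mathcal S$ and $\mathcal V$. Multiplying, we get
\[
\mathtt{bcirc_z}(\mathcal U)\mathtt{bcirc_z}(\mathcal S)\mathtt{bcirc_z}(\mathcal V)^*=(F_{n_3}^*\otimes I_{n_1})\operatorname{diag}(U_kS_kV_k^*)(F_{n_3}\otimes I_{n_2})=\mathtt{bcirc_z}(\mathcal A).
\]
Using Lemma~\ref{ct} to write $\mathtt{bcirc_z}(\mathcal V)^*=\mathtt{bcirc_z}(\mathcal V^*)$ and applying Lemma~\ref{newrelation} twice, we obtain $\mathcal A=\mathcal U*_Q\mathcal S*_Q\mathcal V^*$. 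Unitarity follows because $\mathtt{bcirc_z}(\mathcal U)$ is a product of complex unitary matrices and a block-diagonal quaternion unitary matrix. The conjugate transpose of a product of complex and quaternion matrices reverses the order as usual, so $\mathtt{bcirc_z}(\mathcal U)$ is unitary, and Lemma~\ref{unitary} then shows that $\mathcal U$ is unitary. The same argument applies to $\mathcal V$.

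The delicate step is showing that $\mathcal S$ is f-diagonal, since this is where the earlier assumption from \cite[Theorem~2.2]{AMLzhang2022} is removed. The inverse DFT acts entrywise along tubes, $\mathcal S(i,j,:)=\frac{1}{\sqrt{n_3}}F_{n_3}^*\widehat{\mathcal S}(i,j,:)$. Each off-diagonal tube $(i\ne j)$ of $\widehat{\mathcal S}$ is zero, so its inverse DFT is zero. Hence every frontal slice of $\mathcal S$ is diagonal, and no diagonality of $\operatorname{diag}(\widehat{\mathcal S})$ beyond slice-wise diagonality is needed. The entries of $\mathcal S$ are complex, since $\widehat{\mathcal S}$ is real and $F^*$ is complex, which means $\mathcal S_{\mathbf c}=0$ and $\mathcal S=\mathcal S_{\mathbf d}$. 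Therefore $\mathtt{bcirc_z}(\mathcal S)=\mathtt{bcirc}(\mathcal S_{\mathbf d})$, and each of its blocks is some $\mathcal S^{(k)}$, which is diagonal.

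One point needs care: multiplying the complex matrices $F^*$ on the left of quaternion matrices must be consistent with how Lemma~\ref{zdc} is used. Because $\mathbf j$ does not commute with $\mathbb C$, we only multiply by $F$ and $F^*$ on the sides where Lemma~\ref{zdc} places them, never moving them past $\mathbf j$. We expect this bookkeeping to be the main technical check.
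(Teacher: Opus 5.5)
Your proposal is correct and follows essentially the same route as the paper: structured slice-wise SVDs of $\widehat{\mathcal A}^{(k)}$ via $\chi$, reassembly through Lemma~\ref{zdc}, unitarity of $\mathcal U$ and $\mathcal V$ via Lemma~\ref{unitary}, and the factorization via Lemmas~\ref{ct} and~\ref{newrelation}. Your tube-wise inverse-DFT argument for f-diagonality, including the observation $\mathcal S_{\mathbf c}=0$, spells out a step the paper only asserts, and the bookkeeping you flag about $F$, $F^*$ and $\mathbf j$ is settled by associativity of quaternion matrix multiplication.
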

    \begin{proof}
    For each $i=1,2,\ldots,n_3$, write the $i$-th frontal slice of
$\widehat{\mathcal A}$ as
$\widehat{\mathcal A}^{(i)}
=X^{(i)}+Y^{(i)}\mathbf j,~
X^{(i)},Y^{(i)}\in\mathbb C^{n_1\times n_2}.$
Its complex representation is
\[
C_i
=
\begin{bmatrix}
X^{(i)} & Y^{(i)}\\[1mm]
-\overline{Y^{(i)}} & \overline{X^{(i)}}
\end{bmatrix}
\in\mathbb C^{2n_1\times 2n_2},
\]
where
\[
\chi(X+Y\mathbf j)
=
\begin{bmatrix}
X & Y\\
-\overline{Y} & \overline{X}
\end{bmatrix}.
\]
Consider a complex singular value decomposition
$C_i=U_{c,i}\Sigma_{c,i}V_{c,i}^*$.
Since $C_i$ is the complex representation of a quaternion matrix, its singular values occur in pairs. Moreover, the singular vectors can be chosen to preserve the quaternion structure, so that
\[
U_{c,i}
=
\begin{bmatrix}
P_i & -\overline{Q_i}\\
Q_i & \overline{P_i}
\end{bmatrix},
\qquad
V_{c,i}
=
\begin{bmatrix}
R_i & -\overline{T_i}\\
T_i & \overline{R_i}
\end{bmatrix},
\]
and
\[
\Sigma_{c,i}
=
\begin{bmatrix}
\widehat S^{(i)} & 0\\
0 & \widehat S^{(i)}
\end{bmatrix},
\]
where $\widehat S^{(i)}\in\mathbb R^{n_1\times n_2}$ is diagonal with
nonnegative diagonal entries.

Define
$\widehat U^{(i)}=P_i-\overline{Q_i}\mathbf j,$ and $\widehat V^{(i)}=R_i-\overline{T_i}\mathbf j.$ Then
\[
U_{c,i}=\chi\bigl(\widehat U^{(i)}\bigr),
\qquad
V_{c,i}=\chi\bigl(\widehat V^{(i)}\bigr),
\qquad
\Sigma_{c,i}=\chi\bigl(\widehat S^{(i)}\bigr),
\]
Since $\chi$ is multiplicative and preserves the conjugate transpose,
\[
\begin{aligned}
\chi\bigl(\widehat{\mathcal A}^{(i)}\bigr)
=
C_i=U_{c,i}\Sigma_{c,i}V_{c,i}^*=
\chi\bigl(\widehat U^{(i)}\bigr)
\chi\bigl(\widehat S^{(i)}\bigr)
\chi\bigl(\widehat V^{(i)}\bigr)^*=
\chi\left(
\widehat U^{(i)}
\widehat S^{(i)}
\bigl(\widehat V^{(i)}\bigr)^*
\right).
\end{aligned}
\]
By the injectivity of $\chi$,
\[
\widehat{\mathcal A}^{(i)}
=
\widehat U^{(i)}
\widehat S^{(i)}
\bigl(\widehat V^{(i)}\bigr)^*,
\qquad
i=1,2,\ldots,n_3.
\]
Furthermore, since $U_{c,i}$ and $V_{c,i}$ are unitary,
\[
\chi\left(
\bigl(\widehat U^{(i)}\bigr)^{*}\widehat U^{(i)}
\right)
=
U_{c,i}^{*}U_{c,i}
=
I
=
\chi(I),
\]
and similarly,
$
\chi\left(\bigl(\widehat V^{(i)}\bigr)^{*}\widehat V^{(i)}
\right)=I.$
The injectivity of $\chi$ therefore gives
\[
\bigl(\widehat U^{(i)}\bigr)^{*}\widehat U^{(i)}=I,
\qquad
\bigl(\widehat V^{(i)}\bigr)^{*}\widehat V^{(i)}=I.
\]
Thus $\widehat U^{(i)}$ and $\widehat V^{(i)}$ are unitary quaternion
matrices, and the stated quaternion SVD follows for each frontal slice

Consequently,
\[
\operatorname{diag}(\widehat{\mathcal A})
=
\operatorname{diag}(\widehat{\mathcal U})
\operatorname{diag}(\widehat{\mathcal S})
\operatorname{diag}(\widehat{\mathcal V})^*.
\]
Since every frontal slice $\widehat S^{(i)}$ is diagonal with
nonnegative real diagonal entries, $\mathcal S$ is an $f$-diagonal
quaternion tensor. Moreover, by Lemma~\ref{zdc},
\[
\operatorname{bcirc}_z(\mathcal U)
=
(F_{n_3}^*\otimes I_{n_1})
\operatorname{diag}(\widehat{\mathcal U})
(F_{n_3}\otimes I_{n_1}),
\]
and hence $\operatorname{bcirc}_z(\mathcal U)$ is unitary. It follows from Lemma~\ref{unitary} that $\mathcal U$ is unitary. The same argument shows that $\mathcal V$
is unitary. Applying Lemma~\ref{zdc}, we obtain
\[
\operatorname{bcirc}_z(\mathcal A)
=
\operatorname{bcirc}_z(\mathcal U)
\operatorname{bcirc}_z(\mathcal S)
\operatorname{bcirc}_z(\mathcal V)^*.
\]
By Lemma~\ref{ct},
$
\operatorname{bcirc}_z(\mathcal V)^*
=
\operatorname{bcirc}_z(\mathcal V^*),
$ and therefore
\[
\operatorname{bcirc}_z(\mathcal A)
=
\operatorname{bcirc}_z(\mathcal U)
\operatorname{bcirc}_z(\mathcal S)
\operatorname{bcirc}_z(\mathcal V^*).
\]
Finally, Lemma~\ref{newrelation} yields
\[
\mathcal A
=
\mathcal U *_Q \mathcal S *_Q \mathcal V^*.
\]
On the other hand, observing the structure of \eqref{bcirczA}, it is obvious that every block of $\mathtt{bcirc_z}(\mathcal S)$
is diagonal.
\end{proof}

%%%%%%%%%%%%%%%%%%%%%%%%%%%%%%%%%%%%%%%%%%%%%%%%%%%%%%%%%%%%%%%%%%%%%%%%%%%%%%%%%%%%%%%%%%%%    
\begin{remark}
    It is worth noting that $\mathcal S$ is f-diagonal equivalent to $\hat{\mathcal S}$ being f-diagonal from \eqref{zhao2.1}. However, the proof in \cite[Theorem 2.2]{AMLzhang2022} mentions that $diag(\hat{\mathcal S})$ is diagonal. Actually, this extra limitation is not required. 
\end{remark}

In what follows, we provide a numerical example to illustrate the truth. 
% \begin{algorithm}[H] \caption{QT-SVD \cite{LiuAMC2026}}\label{QTSVD}
% \begin{algorithmic}[1]
% \smallskip
% \Input Third-order quaternion tensor $\mathcal{A} \in \mathbb{H}^{n_1 \times n_2 \times n_3}.$
% \Output  Third-order unitary quaternion tensors $\mathcal{U}\in \mathbb{H}^{n_1 \times n_1 \times n_3}$ and $\mathcal {V} \in \mathbb{H}^{n_2 \times n_2 \times n_3}$, third-order f-diagonal quaternion tensor $\mathcal{S} \in \mathbb{H}^{n_1 \times n_2 \times n_3}$.
% \State $\hat{\mathcal A}=fftq(\mathcal A,[\ ],3)$
% \For {$i=1,2,...,n_3$ }

% $[\hat{\mathcal{U}}(:,:,i),\hat{\mathcal{S}}(:,:,i),\hat{\mathcal{V}}(:,:,i)]=svd(\hat{\mathcal A}(:,:,i))$
% \EndFor
% \State 
% $\mathcal{U}=ifftq(\hat{\mathcal{U}},[\ ],3),\quad \mathcal{S}=ifftq(\hat{\mathcal{S}},[\ ],3), \quad \mathcal{V}=ifftq(\hat{\mathcal{V}},[\ ],3)$

% \end{algorithmic}
% \end{algorithm}

\begin{example}
Let $\mathcal A=\mathcal A_{\mathbf{d}}+\mathbf{j}\mathcal A_{\mathbf{c}}\in\mathbb{H}^{3\times 2\times 3}$ whose frontal slices are given below.
{\small
     \[\mathcal{A}_{\mathbf{d}}(:,:,1) = \begin{bmatrix} \ -0.4102-0.201\mathbf i & -0.6358-0.4573 \mathbf i\ \\ \ -0.5639+0.1595\mathbf i & 0.4203+0.0335\mathbf i\ \\ \ -0.6145-0.2805\mathbf i & 0.09-0.6297\mathbf i\ \end{bmatrix},~
        \mathcal{A}_{\mathbf{c}}(:,:,1) = \begin{bmatrix} \ -0.3436-0.8205\mathbf i & 0.169-0.5984\mathbf i\ \\ \ 0.1612-0.7941\mathbf i & 0.4122+0.8077\mathbf i\ \\ \ -0.4092+0.6133\mathbf i & -0.1043+0.7645\mathbf i\ \end{bmatrix},
   \]\small}{\small
     \[\mathcal{A}_{\mathbf{d}}(:,:,2) = \begin{bmatrix} \ -0.1818+0.4305\mathbf i & 0.3797-0.054\mathbf i\ \\ \ 0.1108+0.0062\mathbf i & -0.7078+0.1158\mathbf i\ \\ \ 0.4451+0.1169\mathbf i & 0.3098-0.1739\mathbf i\ \end{bmatrix},~\mathcal{A}_{\mathbf{c}}(:,:,2) = \begin{bmatrix} \ -0.7455-0.4753\mathbf i & -0.5206-0.7628\mathbf i\ \\ \ -0.7029+0.7026\mathbf i & -0.6927-0.0756\mathbf i\ \\ \ 0.8874+0.029\mathbf i & -0.3586+0.8632\mathbf i\ \end{bmatrix},
  \]\small}{\small
     \[~
     \mathcal{A}_{\mathbf{d}}(:,:,3) = \begin{bmatrix} \ -0.4924-0.6509\mathbf i & 0.9176-0.3561\mathbf i\ \\ \ 0.8159+0.3591\mathbf i & 0.4122+0.0498\mathbf i\ \\ \ 0.3714+0.6064\mathbf i & -0.4633+0.7219\mathbf i\ \end{bmatrix},~
     \mathcal{A}_{\mathbf{c}}(:,:,3) = \begin{bmatrix} \ -0.3479-0.4613\mathbf i & -0.12+0.1296\mathbf i\ \\ \ 0.334+0.3062\mathbf i & 0.4724-0.7775\mathbf i\ \\ \ 0.599+0.3682\mathbf i & 0.3624+0.3646\mathbf i\ \end{bmatrix}.\]
   In accordance with Algorithm 2 in \cite{LiuAMC2026}, applying the $fftq(\mathcal{A},[\ ],3)$ operation to \(\mathcal{A}\) along the third mode yields the transformed tensor \(\hat{\mathcal{A}}=\hat{\mathcal A}_{\mathbf{d}}+\mathbf{j}\hat{\mathcal A}_{\mathbf{c}},\) where
     {\small
     \[\hat{\mathcal{A}}_{\mathbf{d}}(:,:,1) = \begin{bmatrix} \ -1.0844-0.4214\mathbf i & 0.6615-0.8674\mathbf i\ \\ \ 0.3629+0.5248\mathbf i & 0.1247+0.1991\mathbf i\ \\ \ 0.2019+0.4428\mathbf i & -0.0635-0.0818\mathbf i\ \end{bmatrix},~
       \hat{\mathcal{A}}_{\mathbf{c}}(:,:,1) = \begin{bmatrix} \ -1.437-1.7572\mathbf i & -0.4715-1.2316\mathbf i\ \\ \ -0.2076+0.2147\mathbf i & 0.1918-0.0454\mathbf i\ \\ \ 1.0771+1.0106\mathbf i & -0.1006+1.9923\mathbf i\ \end{bmatrix},
   \]\small}{\small
     \[\hat{\mathcal{A}}_{\mathbf{d}}(:,:,2) = \begin{bmatrix} \ 0.8634-0.3598\mathbf i & -1.0228+0.2136\mathbf i\ \\ \ -1.3329+0.5874\mathbf i & 0.6253+0.9207\mathbf i\ \\ \ -1.4466-0.706\mathbf i & -0.609-1.5733\mathbf i\ \end{bmatrix},~
     \hat{\mathcal{A}}_{\mathbf{c}}(:,:,2) = \begin{bmatrix} \ 0.2152-0.6965\mathbf i & 1.2622-0.6287\mathbf i\ \\ \ 0.0023-2.1965\mathbf i & -0.0855+0.2252\mathbf i\ \\ \ -0.8586+0.6645\mathbf i & -0.538-0.4738\mathbf i\ \end{bmatrix},\]\small}{\small
     \[\hat{\mathcal{A}}_{\mathbf{d}}(:,:,3) = \begin{bmatrix} \ -1.0097+0.1783\mathbf i & -1.5461-0.7181\mathbf i\ \\ \ -0.7216-0.6338\mathbf i & 0.5109-1.0193\mathbf i\ \\ \ -0.5988-0.5784\mathbf i & 0.9425-0.2342\mathbf i\ \end{bmatrix},~
     \hat{\mathcal{A}}_{\mathbf{c}}(:,:,3) = \begin{bmatrix} \ 0.1909-0.008\mathbf i & -0.2836+0.0651\mathbf i\ \\ \ 0.689-0.4005\mathbf i & 1.1302+2.2432\mathbf i\ \\ \ -1.4462+0.1649\mathbf i & 0.3256+0.7749\mathbf i\ \end{bmatrix}.\]\small}
     
    Then performing the SVD on each frontal slice of $\hat{\mathcal{A}}$ 
     to obtain $\hat{\mathcal{U}}$, $\hat{\mathcal{S}}$, and $\hat{\mathcal{V}}$. The frontal slices of $\hat{\mathcal{U}}
= \hat{\mathcal{U}}_{\mathbf{d}}
+\mathbf{j}\hat{\mathcal{U}}_{\mathbf{c}}$ are listed below.
    \[\hat{\mathcal{U}}_{\mathbf{d}}(:,:,1) = \begin{bmatrix} \ -0.316-0.2197\mathbf i & 0.194-0.3172\mathbf i & -0.022+0.2968\mathbf i\ \\ \ 0.1119+0.1231\mathbf i & -0.0399-0.2149\mathbf i & -0.3006+0.2423\mathbf i\ \\ \ 0.1197-0.0027\mathbf i & 0.2103-0.613\mathbf i & 0.0389+0.1057\mathbf i\ \end{bmatrix},\]
     \[\hat{\mathcal{U}}_{\mathbf{d}}(:,:,2) = \begin{bmatrix} \ 0.0741-0.1092\mathbf i & -0.553-0.0001\mathbf i & -0.2588+0.1853\mathbf i\ \\ \ -0.2397+0.1718\mathbf i & 0.4848-0.0189\mathbf i & -0.3055+0.1774\mathbf i\ \\ \ -0.408-0.2385\mathbf i & 0.0908-0.0717\mathbf i & -0.3391+0.2349\mathbf i\ \end{bmatrix},\]
     \[\hat{\mathcal{U}}_{\mathbf{d}}(:,:,3) = \begin{bmatrix} \ -0.3258+0.2581\mathbf i & 0.2858+0.2151\mathbf i & 0.1633+0.026\mathbf i\ \\ \ -0.338-0.583\mathbf i & 0.0651-0.3218\mathbf i & -0.3582-0.3612\mathbf i\ \\ \ -0.1246-0.3518\mathbf i & 0.2604-0.055\mathbf i & -0.0401+0.3709\mathbf i\ \end{bmatrix},\]
     \[\hat{\mathcal{U}}_{\mathbf{c}}(:,:,1) = \begin{bmatrix} \ -0.4497-0.4981\mathbf i & 0.1318+0.3711\mathbf i & -0.1335+0.0429\mathbf i\ \\ \ -0.0402+0.019\mathbf i & 0.1195-0.215\mathbf i & -0.1106-0.8371\mathbf i\ \\ \ 0.4208+0.4248\mathbf i & 0.2405+0.3472\mathbf i & -0.0494+0.1207\mathbf i\ \end{bmatrix},\]
     \[\hat{\mathcal{U}}_{\mathbf{c}}(:,:,2) = \begin{bmatrix} \ -0.0157-0.3168\mathbf i & -0.2385-0.3107\mathbf i & -0.0244+0.5664\mathbf i\ \\ \ 0.0912-0.5875\mathbf i & 0.2666+0.2318\mathbf i & 0.0911+0.2573\mathbf i\ \\ \ -0.3572+0.3011\mathbf i & -0.2851+0.2932\mathbf i & -0.4536+0.0441\mathbf i\ \end{bmatrix},\]
     \[\hat{\mathcal{U}}_{\mathbf{c}}(:,:,3) = \begin{bmatrix} \ 0.1314-0.2113\mathbf i & 0.0384-0.2741\mathbf i & 0.3643+0.633\mathbf i\ \\ \ 0.2915-0.2155\mathbf i & -0.1034-0.0009\mathbf i & -0.0846+0.1732\mathbf i\ \\ \ -0.1881+0.071\mathbf i & 0.7782-0.0232\mathbf i & 0.0456-0.0452\mathbf i\ \end{bmatrix}.\]
Also, each frontal slice of $\hat{\mathcal{V}}
= \hat{\mathcal{V}}_{\mathbf{d}}+ \mathbf{j}\hat{\mathcal{V}}_{\mathbf{c}}$ is listed as follows.
     \[\hat{\mathcal{V}}_{\mathbf{d}}(:,:,1) = \begin{bmatrix} \ 0.7662 & 0.4201+0.4152\mathbf i\ \\ \ -0.6426 & 0.5009+0.4951\mathbf i\ \end{bmatrix},\hat{\mathcal{V}}_{\mathbf{c}}(:,:,1) = \begin{bmatrix} \ 0 & 0.1618+0.1945\mathbf i\ \\ \ 0 & 0.1929+0.232\mathbf i\ \end{bmatrix},\]
     \[\hat{\mathcal{V}}_{\mathbf{d}}(:,:,2) = \begin{bmatrix} \ 0.8947 & 0.1687+0.2127\mathbf i\ \\ \ -0.4466 & 0.3379+0.4262\mathbf i\ \end{bmatrix},\hat{\mathcal{V}}_{\mathbf{c}}(:,:,2) = \begin{bmatrix} \ 0 & -0.3418-0.0943\mathbf i\ \\ \ 0 & -0.6849-0.1889\mathbf i\ \end{bmatrix},\]
     \[\hat{\mathcal{V}}_{\mathbf{d}}(:,:,3) = \begin{bmatrix} \ 0.4792 & 0.1269+0.5947\mathbf i\ \\ \ -0.8777 & 0.0693+0.3247\mathbf i\ \end{bmatrix},\hat{\mathcal{V}}_{\mathbf{c}}(:,:,3) = \begin{bmatrix} \ 0 & 0.4678-0.4264\mathbf i\ \\ \ 0 & 0.2554-0.2328\mathbf i\ \end{bmatrix}.\]
It is noteworthy that
      \[diag(\hat{\mathcal{S}})(:,:) = \begin{bmatrix} \ 3.8889 & 0 & 0 & 0 & 0 & 0\ \\ \ 0 & 1.1447 & 0 & 0 & 0 & 0\ \\ \ 0 & 0 & 0 & 0 & 0 & 0\ \\ \ 0 & 0 & 3.6848 & 0 & 0 & 0\ \\ \ 0 & 0 & 0 & 2.5063 & 0 & 0\ \\ \ 0 & 0 & 0 & 0 & 0 & 0\ \\ \ 0 & 0 & 0 & 0 & 3.8902 & 0\ \\ \ 0 & 0 & 0 & 0 & 0 & 1.604\ \\ \ 0 & 0 & 0 & 0 & 0 & 0\ \end{bmatrix},\] which is not a diagonal matrix.
      
In what follows, we obtain two unitary tensors $\mathcal{U}$, $\mathcal{V}$, and an f-diagonal tensor $\mathcal{S}$. Specifically, the frontal slices of 
$\mathcal{U}=\mathcal{U}_{\mathbf{d}} + \mathbf{j}\mathcal{U}_{\mathbf{c}}$ are represented as

\[\mathcal{U}_{\mathbf{d}}(:,:,1) = \begin{bmatrix} \ -0.1892-0.0236\mathbf i & -0.0244-0.034\mathbf i & -0.0392+0.1694\mathbf i\ \\ \ -0.1552-0.0961\mathbf i & 0.17-0.1852\mathbf i & -0.3215+0.0195\mathbf i\ \\ \ -0.1376-0.1977\mathbf i & 0.1872-0.2466\mathbf i & -0.1134+0.2372\mathbf i\ \end{bmatrix},\]
     \[\mathcal{U}_{\mathbf{d}}(:,:,2) = \begin{bmatrix} \ 0.0426+0.0174\mathbf i & 0.1713-0.3837\mathbf i & -0.0374-0.0582\mathbf i\ \\ \ -0.0843+0.1379\mathbf i & -0.1924+0.1063\mathbf i & -0.1451+0.1266\mathbf i\ \\ \ 0.0959+0.0157\mathbf i & 0.0164-0.2322\mathbf i & 0.1154-0.152\mathbf i\ \end{bmatrix},\]
     \[\mathcal{U}_{\mathbf{d}}(:,:,3) = \begin{bmatrix} \ -0.1694-0.2135\mathbf i & 0.0471+0.1006\mathbf i & 0.0546+0.1856\mathbf i\ \\ \ 0.3515+0.0812\mathbf i & -0.0175-0.136\mathbf i & 0.1659+0.0962\mathbf i\ \\ \ 0.1614+0.1793\mathbf i & 0.0068-0.1343\mathbf i & 0.0369+0.0206\mathbf i\ \end{bmatrix},\]
     and
\[\mathcal{U}_{\mathbf{c}}(:,:,1) = \begin{bmatrix} \ -0.1113-0.3421\mathbf i & -0.0227-0.0713\mathbf i & 0.0688+0.4141\mathbf i\ \\ \ 0.1142-0.2614\mathbf i & 0.0942+0.0053\mathbf i & -0.0347-0.1355\mathbf i\ \\ \ -0.0415+0.2656\mathbf i & 0.2445+0.2057\mathbf i & -0.1525+0.0399\mathbf i\ \end{bmatrix},\]
     \[\mathcal{U}_{\mathbf{c}}(:,:,2) = \begin{bmatrix} \ -0.1996-0.0355\mathbf i & 0.0667+0.3011\mathbf i & -0.1204-0.0734\mathbf i\ \\ \ -0.1846+0.198\mathbf i & 0.0798-0.217\mathbf i & -0.0137-0.4015\mathbf i\ \\ \ 0.2976+0.1284\mathbf i & 0.0893+0.3777\mathbf i & 0.0773+0.1845\mathbf i\ \end{bmatrix},\]
     \[\mathcal{U}_{\mathbf{c}}(:,:,3) = \begin{bmatrix} \ -0.1388-0.1205\mathbf i & 0.0879+0.1413\mathbf i & -0.082-0.2978\mathbf i\ \\ \ 0.0302+0.0823\mathbf i & -0.0545-0.0034\mathbf i & -0.0622-0.3001\mathbf i\ \\ \ 0.1647+0.0308\mathbf i & -0.0934-0.2362\mathbf i & 0.0257-0.1037\mathbf i\ \end{bmatrix}.\]
  Then the frontal slices of 
$\mathcal{V}=\mathcal{V}_{\mathbf{d}} + \mathbf{j}\mathcal{V}_{\mathbf{c}}$ are given by 
     \[\mathcal{V}_{\mathbf{d}}(:,:,1) = \begin{bmatrix} \ 0.7134 & -0.6556\ \\ \ 0.2385-0.4075\mathbf i & 0.3027-0.4153\mathbf i\ \end{bmatrix},~
     \mathcal{V}_{\mathbf{c}}(:,:,1) = \begin{bmatrix} \ 0 & 0\ \\ \ -0.0959+0.1087\mathbf i & 0.0788+0.0632\mathbf i\ \end{bmatrix},
    \]
     \[ \mathcal{V}_{\mathbf{d}}(:,:,2) = \begin{bmatrix} \ 0.0264+0.12\mathbf i & 0.0065+0.1245\mathbf i\ \\ \ -0.0195+0.0082\mathbf i & 0.1284+0.0377\mathbf i\ \end{bmatrix},~\mathcal{V}_{\mathbf{c}}(:,:,2) = \begin{bmatrix} \ 0 & 0\ \\ \ 0.0629+0.0821\mathbf i & -0.1232+0.1238\mathbf i\ \end{bmatrix},
\]    
\[\mathcal{V}_{\mathbf{d}}(:,:,3) = \begin{bmatrix} \ 0.0264-0.12\mathbf i & 0.0065-0.1245\mathbf i\ \\ \ 0.201-0.0159\mathbf i & 0.0698-0.1174\mathbf i\ \end{bmatrix},~
     \mathcal{V}_{\mathbf{c}}(:,:,3) = \begin{bmatrix} \ 0 & 0\ \\ \ -0.1288-0.3853\mathbf i & -0.1486-0.419\mathbf i\ \end{bmatrix}.\]
Note that the frontal slices of $\mathcal{S}$ are diagonal matrices, as shown below.
\[\mathcal{S}(:,:,1) = \begin{bmatrix} \ 3.8213 & 0\ \\ \ 0 & 1.7517\ \\ \ 0 & 0\ \end{bmatrix}, \quad \mathcal{S}(:,:,2) = \begin{bmatrix} \ 0.0338 & 0\ \\ \ 0 & -0.3035\ \\ \ 0 & 0\ \end{bmatrix}, \quad \mathcal{S}(:,:,3) = \begin{bmatrix} \ 0.0338 & 0\ \\ \ 0 & -0.3035\ \\ \ 0 & 0\ \end{bmatrix}.\]
}

Above all, we observe that $\operatorname{diag}(\hat{\mathcal{S}})$ is not a diagonal matrix. Actually, $diag(\hat{\mathcal S})$ is also not required to be a diagonal matrix. In general, requiring $\operatorname{diag}(\hat{\mathcal S})$ to be diagonal \emph{and} each of its frontal slices to be diagonal leaves only one possibility: the first slice of $\operatorname{diag}(\hat{\mathcal S})$ is diagonal, while all remaining slices are zero. Such a requirement is evidently too restrictive.
\end{example}

% BEGIN ADDED ALGORITHMS
\subsection{Algorithms for QT-SVD}
The preceding theorem gives rise to two implementations of the QT-SVD.
Algorithm~\ref{alg:qt-explicit} is based on the explicit $z$-block circulant representation
and its block diagonalization, whereas Algorithm~\ref{alg:qt-fast} computes the decomposition directly from the frontal slices in the Fourier domain. Throughout, $\mathtt{fftq}$ and $\mathtt{ifftq}$ are used to implement the mode-$3$ DFT and inverse DFT, respectively, following the Fourier-transform convention in \cite{AMLzhang2022}. We denote by $\mathtt{qsvd}_{\mathbb C}$ the fast universal QSVD algorithm in \cite[Algorithm~1]{DCTAML2026}, which computes the quaternion matrix SVD via its complex representation and constructs
structure-preserving singular vectors. We use the complex-pair representation $\mathcal A=\mathcal A_{\mathbf d}+\mathbf j\mathcal A_{\mathbf c}.$
Equivalently, with $\mathbf j$ written on the right,
$\mathcal A=\mathcal A_{\mathbf d}
+\overline{\mathcal A_{\mathbf c}}\mathbf j.$
%%%%%%%%%%%%%%%%%%%%%%%%%%%%%%%%%%%%%
\begin{algorithm}[H]
\caption{QT-SVD using the explicit $z$-block circulant matrix}
\label{alg:qt-explicit}
\small
\begin{algorithmic}[1]
\Input $\mathcal A\in\mathbb H^{n_1\times n_2\times m}$.
\Output $\mathcal A=\mathcal U*_Q\mathcal S*_Q\mathcal V^*$,
with QT-unitary $\mathcal U,\mathcal V$ and an f-diagonal
tensor $\mathcal S$.

\State Compute
\[
D=(F_m\otimes I_{n_1})
\mathtt{bcirc}_z(\mathcal A)
(F_m^*\otimes I_{n_2})
=\operatorname{diag}(D_1,\ldots,D_m).
\]

\For{$t=1,\ldots,m$}
    \State $(U_t,\Sigma_t,V_t)
    \gets\operatorname{qsvd}_{\mathbb C}(D_t)$.
\EndFor

\State Set
$U_F=\operatorname{diag}(U_1,\ldots,U_m)$,
$S_F=\operatorname{diag}(\Sigma_1,\ldots,\Sigma_m)$, and
$V_F=\operatorname{diag}(V_1,\ldots,V_m)$.

\State Compute
\begin{align*}
\mathtt{bcirc}_z(\mathcal U)
&=(F_m^*\otimes I_{n_1})U_F(F_m\otimes I_{n_1}),\\
\mathtt{bcirc}_z(\mathcal S)
&=(F_m^*\otimes I_{n_1})S_F(F_m\otimes I_{n_2}),\\
\mathtt{bcirc}_z(\mathcal V)
&=(F_m^*\otimes I_{n_2})V_F(F_m\otimes I_{n_2}).
\end{align*}

\State Recover $\mathcal U,\mathcal S,\mathcal V$
by applying $\mathtt{fold}$ to the first block column
of each corresponding $z$-block circulant matrix.

\end{algorithmic}
\end{algorithm}
%%%%%%%%%%%%%%%%%%%%%%%%%%%%%%%%%%%%%%%%%%%%%%%%%%%%%%%%%%%%%%%%%%%%%%%%%%%%%%
Algorithm~\ref{alg:qt-explicit} implements the QT-SVD through the explicit
$z$-block circulant representation. By avoiding the explicit construction
of this large matrix and performing the decomposition directly on the
Fourier-domain frontal slices, the computational cost is reduced, leading
to the faster FFT-based implementation in Algorithm~\ref{alg:qt-fast}.
\begin{algorithm}[H]
\caption{Fast QT-SVD using $qsvd_\mathbb C$}
\label{alg:qt-fast}
\small
\begin{algorithmic}[1]
\Input $\mathcal A\in\mathbb H^{n_1\times n_2\times m}$.
\Output $\mathcal A=\mathcal U*_Q\mathcal S*_Q\mathcal V^*$,
with QT-unitary $\mathcal U,\mathcal V$ and an f-diagonal
tensor $\mathcal S$.

\State $\widehat{\mathcal A}
\gets\mathtt{fftq}(\mathcal A,[],3)$.

\For{$t=1,\ldots,m$}
    \State $(\widehat{\mathcal U}^{(t)},
             \widehat{\mathcal S}^{(t)},
             \widehat{\mathcal V}^{(t)})
    \gets\operatorname{qsvd}_{\mathbb C}
    (\widehat{\mathcal A}^{(t)})$.
\EndFor

\State $\mathcal U
\gets\mathtt{ifftq}(\widehat{\mathcal U},[],3)$,
$\mathcal S
\gets\mathtt{ifftq}(\widehat{\mathcal S},[],3)$,
$\mathcal V
\gets\mathtt{ifftq}(\widehat{\mathcal V},[],3)$.
\end{algorithmic}
\end{algorithm}
%%%%%%%%%%%%%%%%%%%%%%%%%%%%%%%%%%%%%%%%%%%%%%%%%%%

%%%%%%%%%%%%%%%%%%%%%%%%%%%%%%%%%%%%%%%%%%%%%%%%%%%%%%%%%%%%%%%%%%%%%%%%%%%%%%%
% END ADDED ALGORITHMS
\subsection{Comparison of Two QT-SVD Implementations}
\label{sec:qt-implementation-comparison}
We compare two QT-SVD implementations based on  quaternion SVD \cite[Theorem~2.3]{AMLzhang2022} and the complex representation used in Algorithm~\ref{alg:qt-fast}, respectively.
Both implementations use the same quaternion FFT,
inverse transform, and QT-product, they differ only in the matrix SVD computation
at each frequency.The direct implementation calls QTFM's quaternion  \texttt{svd}, whereas the complex implementation forms the $2n\times2n$ complex representation
of each $n\times n$ quaternion matrix and recovers quaternion matrices from its complex SVD.

To ensure a fair, reproducible, and statistically stable comparison of the two implementations, we adopt the following experimental protocol. For each size in Table~\ref{tab:qt-implementation-comparison}, we generate
three quaternion tensors whose four real components have independent
standard normal entries, using MATLAB seeds 20260918--20260920.
The basis is $\{\mathbf i,\mathbf j,\mathbf k\}$, and no truncation is
applied. All tested Fourier slices have positive, distinct quaternion
singular values. For each input, both
implementations are warmed up once and timed five times, with their
execution order alternated. The table reports the mean of the three
per-input median times. Timing includes the FFT, all matrix SVDs, and
the inverse transforms; error evaluation and file saving are excluded.

For this comparison, we use the relative reconstruction and unitarity errors
\begin{align*}
r_{\rm rec}
&=\frac{\|\mathcal A-\mathcal U*_Q\mathcal S*_Q\mathcal V^*\|_F}
        {\|\mathcal A\|_F},\\
r_{\rm u}
&=\max_{\mathcal W\in\{\mathcal U,\mathcal V\}}
\frac{\max\{\|\mathcal W^**_Q\mathcal W-\mathcal I\|_F,
             \|\mathcal W*_Q\mathcal W^*-\mathcal I\|_F\}}
     {\|\mathcal I\|_F}.
\end{align*}
The tabulated errors are the maxima over the three inputs of each size.

\begin{table}[htbp]
\centering
\caption{Direct quaternion and complex SVD implementations of QT-SVD.
Errors are relative; times are in seconds.}
\label{tab:qt-implementation-comparison}
\small
\begin{tabular}{llrrr}
\toprule
Tensor size & Matrix SVD & $r_{\rm rec}$ & $r_{\rm u}$ & Time (s)\\
\midrule
\multirow{2}{*}{$2\times 2\times 4$} & Quaternion & $1.046\times10^{-15}$ & $8.838\times10^{-16}$ & 0.011027 \\
 & Complex & $6.235\times10^{-16}$ & $6.625\times10^{-16}$ & 0.002508 \\
\addlinespace[2pt]
\multirow{2}{*}{$64\times 64\times 16$} & Quaternion & $3.267\times10^{-15}$ & $2.745\times10^{-15}$ & 0.977710 \\
 & Complex & $2.661\times10^{-14}$ & $2.183\times10^{-14}$ & 0.063538 \\
\addlinespace[2pt]
\multirow{2}{*}{$128\times 128\times 32$} & Quaternion & $4.714\times10^{-15}$ & $3.866\times10^{-15}$ & 6.398060 \\
 & Complex & $4.215\times10^{-14}$ & $3.365\times10^{-14}$ & 0.976450 \\
\addlinespace[2pt]
\multirow{2}{*}{$256\times 256\times 64$} & Quaternion & $7.735\times10^{-15}$ & $6.065\times10^{-15}$ & 67.537641 \\
 & Complex & $6.697\times10^{-14}$ & $5.239\times10^{-14}$ & 13.184397 \\
\bottomrule
\end{tabular}
\end{table}

Both implementations reconstruct the input and return numerically unitary
tensors. The largest relative reconstruction errors are $7.735\times10^{-15}$
and $6.697\times10^{-14}$ for the direct quaternion and complex implementations,
respectively. The relative difference between their Fourier-domain
singular values is at most $3.026\times10^{-15}$, and the relative difference
between their reconstructed tensors is at most $6.734\times10^{-14}$.
The complex implementation has somewhat larger errors for the three
larger sizes, while retaining high numerical accuracy. It is 4.40--15.39 times
faster in these tests. This comparison demonstrates an implementation
speed advantage for the tested inputs; both implementations compute
the same QT-product decomposition.
% END QT-SVD IMPLEMENTATION COMPARISON

\section{Applications to Color Video Processing}
We apply the QT-SVD to color video processing and compare it with the quaternion T-SVD based on the T-product~\cite{tsvdarxiv}. Both methods are tested on the same five DAVIS sequences~\cite{DAVIS2017,DAVIS2016} at $256\times256$ resolution under identical preprocessing, masking, permutation, and recovery procedures. We compare decomposition accuracy, reconstruction quality, ciphertext statistics and differential sensitivity, stability under pre-encryption perturbations, and decomposition time. The processing scheme is described first, followed by the experimental results.

\subsection{Video Encryption and Decryption Scheme}
\label{sec:scheme}
Let
$
\mathcal A
=
R\mathbf i+G\mathbf j+B\mathbf k
\in\mathbb H^{n\times n\times p}
$
represent a color video, where the RGB components are normalized to
$[0,1]$. Under the QT-product, we compute
\[
\mathcal A
=
\mathcal U\qprod_Q\mathcal S\qprod_Q\mathcal V^{*},
\]
where $\mathcal S$ is the core tensor used in the subsequent encryption
step. For comparison, the quaternion T-SVD is computed under the
T-product, with all tensor operations defined consistently with the
corresponding product. In both cases, the complete decomposition is
retained, with no rank truncation or compression. 
% In MATLAB, the QT-SVD
% branch is implemented using \texttt{qtsvd}, \texttt{qt\_product}, and
% \texttt{conjtran\_ten\_qtproduct}, whereas the T-SVD branch uses
% \texttt{tsvd}, \texttt{quaternion\_t\_product}, and
% \texttt{conjtran\_ten\_tproduct}.
The processing steps are
\[
\text{video}
\longrightarrow
\text{decomposition}
\longrightarrow
\text{core component scaling}
\longrightarrow
\text{XOR masking}
\longrightarrow
\text{Arnold permutation}.
\]
During recovery, the Arnold permutation is inverted first, followed by
XOR decoding and inverse scaling of the core components. The recovered
core is then used to reconstruct the video with the retained factors
under the QT-product and T-product, respectively. The visualizations shown below are obtained from the three imaginary components of the encrypted core tensor.

\subsubsection{Component Normalization and Logistic Keystream Masking}

For the \(l\)-th frontal slice and the \(c\)-th stored quaternion component, 
\(c\in\{1,2,3\}\), let \(P_{l,c}\) denote the corresponding 
real-valued core slice. It is normalized and quantized into an 
unsigned 8-bit representation by
\begin{equation}
\overline P_{l,c}=\operatorname{uint8}\!\left(
255\frac{P_{l,c}-m_{l,c}}{M_{l,c}-m_{l,c}}
\right),
\label{normalization}
\end{equation}
where \(m_{l,c}\) and \(M_{l,c}\) are the minimum and maximum values of \(P_{l,c}\), respectively. If \(M_{l,c}=m_{l,c}\), a dedicated constant-slice treatment is adopted to prevent division by zero. The corresponding extrema are preserved for subsequent inverse normalization. Since the conversion to \texttt{uint8} quantizes the scaled values to integers in \([0,255]\), inverse normalization may introduce a small recovery error.

The Logistic sequence \cite{Logistic} is generated independently for each frame and component by
\begin{equation*}
y_{t+1}=\mu y_t(1-y_t),\qquad \mu=3.97,
\end{equation*}
where the initial state is defined by
\begin{equation*}
y_0^{(l,c)}=\frac{1}{2}\left(
0.42+\frac{\sum S_w^{(l)}}{10^4}
+\frac{\operatorname{mod}(\sum \overline P_{l,c},256)}{255}
\right),
\end{equation*}
where $S_w^{(l)}$ denotes the real component of the corresponding processed quaternion slice. 
The resulting 8-bit keystream is defined as
\begin{equation*}
K_{l,c}(t)=\operatorname{mod}\!\left(\left\lfloor10^{15}y_t\right\rfloor,256\right).
\end{equation*}
Then the component is masked by the Logistic keystream:
\begin{equation*}
C_{l,c}^{(0)}(t)=\overline P_{l,c}(t)\oplus K_{l,c}(t),
\end{equation*}
where $\oplus$ denotes the bitwise XOR operation.

The Logistic keystream masking modifies the component values but does not permute their spatial locations. An invertible Arnold permutation is therefore subsequently applied to provide spatial scrambling.

\subsubsection{Arnold Permutation and Core Component Recovery}

For zero-based coordinates, let $\Pi_{a,b}$ denote the Arnold permutation
\[
\begin{pmatrix}
u'\\
v'
\end{pmatrix}
=
\begin{pmatrix}
1 & a\\
b & ab+1
\end{pmatrix}
\begin{pmatrix}
u\\
v
\end{pmatrix}
\pmod n,
\]
where $(u,v)$ and $(u',v')$ denote the zero-based coordinates before and after
permutation, respectively, and $a,b$ are integer parameters determining the
Arnold transformation.

In the present implementation, $a=2$, $b=1$, and 20 iterations are used. 
The transmitted component is given by
\[
C_{l,c}
=
\Pi_{2,1}^{20}\left(C_{l,c}^{(0)}\right).
\]

During decryption, the inverse permutation $\Pi_{2,1}^{-20}$ is applied first,
followed by XOR decoding with the stored keystream and the inverse normalization
in Eq.~\eqref{normalization}. The recovered components are then
assembled into the core tensor $\widetilde{\mathcal S}$. For the QT-SVD branch,
the quaternion tensor is reconstructed as
\[
\widetilde{\mathcal A}
=
\mathcal U *_Q \widetilde{\mathcal S} *_Q \mathcal V^{*},
\]
whereas the T-SVD branch is reconstructed analogously using the T-product and
the corresponding adjoint.

In the adopted quaternion representation, the RGB channels of the original video are encoded in the three imaginary components. In the present implementation, encryption is applied to the three imaginary components of the core tensor. Any confidential information contained in the real component of the core tensor or other auxiliary reconstruction data must likewise be encrypted or transmitted through a secure channel.

\subsection{Experimental Setup and Evaluation Metrics}

The experiments are conducted on five sequences from the DAVIS 2017 TrainVal
480p dataset: \textit{blackswan}, \textit{bmx-trees},
\textit{car-roundabout}, \textit{dance-twirl}, and \textit{dog}, containing
50, 80, 75, 90, and 60 frames, respectively, for a total of 355 frames.
All available frames are used. Each frame is converted to double precision,
normalized to $[0,1]$, and resized to $256\times256$.

The QT-SVD and T-SVD branches use the same input data, preprocessing procedure,
and encryption parameters. In both cases, the full decomposition is employed
without singular-value truncation. The main data and implementation settings
are summarized in Table~\ref{tab:settings}.

All reconstruction metrics are computed directly from the reconstructed RGB frames in double precision after clipping to \([0,1]\) and before any visualization or video encoding. Consequently, neither image quantization nor lossy video compression affects the reported PSNR and SSIM values.

The evaluation considers four aspects: numerical decomposition and
reconstruction accuracy, statistical properties of the ciphertext,
differential sensitivity, and stability under perturbations of the input
video. The corresponding metrics are introduced below.

\begin{table}[!htbp]
\centering\small
\caption{Data and implementation settings.}
\label{tab:settings}
\setlength{\tabcolsep}{4pt}
\begin{tabular}{lp{.64\linewidth}}
\toprule
Item & Setting \\
\midrule
Dataset & DAVIS 2017 TrainVal, 480p JPEG sequences \\
Sequences (frames) & blackswan (50), bmx-trees (80), car-roundabout (75), dance-twirl (90), dog (60); 355 frames in total \\
Input preprocessing & Each frame is resized to \(256\times256\); RGB values are represented in double precision and normalized to \([0,1]\) \\
Quaternion basis & $\mathbf i,\mathbf j,\mathbf k$; pure quaternion RGB input \\
Decompositions & Full QT-SVD and full T-SVD; no singular-value truncation \\
Component scaling & Slice-wise, component-wise min--max scaling to 8-bit unsigned integers (\texttt{uint8}) \\
Logistic-map masking & $\mu=3.97$, offset $0.42$, scale $10^{15}$; data-dependent initial states \\
Arnold permutation & $a=2$, $b=1$, 20 iterations \\
Visualization frames & Frames 1, 25, and 50; comparison of the two methods at frame 25 \\
Statistical evaluation & Frame 25; 256 histogram bins; 3,000 horizontally adjacent R-channel pixel pairs \\
Input perturbations & Salt-and-pepper density $0.01$; Gaussian mean 0, variance $0.01$; centered \(12\times12\) zero-valued occlusion \\
Environment and seed & MATLAB R2022b; random seed 20260716 (\texttt{twister}) for paired perturbation experiments \\
\bottomrule
\end{tabular}
\end{table}

\subsubsection{Decomposition Errors}

For each method, let $\mathcal A_{\rm dec}$ denote the tensor reconstructed
directly from the corresponding decomposition factors. We report the absolute
Frobenius error
\[
e_{\rm dec}
=
\|\mathcal A-\mathcal A_{\rm dec}\|_F.
\]
This tensor-level error is evaluated before conversion to the RGB domain and
is not normalized by $\|\mathcal A\|_F$. It measures the numerical accuracy
of the corresponding decomposition.

\subsubsection{Reconstruction Quality Metrics}

The mean squared error (MSE), peak signal-to-noise ratio (PSNR), and
structural similarity index measure (SSIM) are used to evaluate the
reconstruction quality. For frame $l$, let $X_{ijcl}$ and
$\widetilde{X}_{ijcl}$ denote the reference and reconstructed RGB values,
respectively, at spatial position $(i,j)$ and color channel
$c\in\{1,2,3\}$. For an $M\times N$ frame, the frame-level MSE is defined by
\[
\operatorname{MSE}_l
=
\frac{1}{3MN}
\sum_{c=1}^{3}
\sum_{i=1}^{M}
\sum_{j=1}^{N}
\left(
X_{ijcl}-\widetilde{X}_{ijcl}
\right)^2.
\]
Since the RGB values are evaluated in $[0,1]$, the corresponding
PSNR~\cite{PSNRSSIM} is
\[
\operatorname{PSNR}_l
=
10\log_{10}
\left(
\frac{1}{\operatorname{MSE}_l}
\right).
\]
Thus, the errors from the three RGB channels are pooled before the logarithm
is taken, rather than averaging three channel-wise PSNR values.

For two corresponding color-channel images $X$ and $\widetilde{X}$, the local
SSIM comparison~\cite{Wang2004} has the form
\[
\operatorname{SSIM}(X,\widetilde{X})
=
\frac{
(2\mu_X\mu_{\widetilde{X}}+C_1)
(2\sigma_{X\widetilde{X}}+C_2)
}{
(\mu_X^2+\mu_{\widetilde{X}}^2+C_1)
(\sigma_X^2+\sigma_{\widetilde{X}}^2+C_2)
}.
\]
With dynamic range $L=1$ and the standard choices
$K_1=0.01$ and $K_2=0.03$, the constants are
\[
C_1=(K_1L)^2=0.01^2,
\qquad
C_2=(K_2L)^2=0.03^2.
\]
SSIM is computed separately for the three RGB channels using MATLAB
\texttt{ssim} with \texttt{DynamicRange=1}, and the resulting channel-wise
values are averaged to obtain the frame-level SSIM.

For each video, the reported PSNR and SSIM values are obtained by averaging the corresponding frame-level scores over all frames. The PSNR gain is defined as the difference between the QT-SVD and T-SVD values. The five-video mean is then obtained by averaging the corresponding video-level scores with equal weight, irrespective of the number of frames in each video.

\subsubsection{Ciphertext Statistics and Differential Sensitivity}

Histograms are computed using 256 bins for frame 25. For the adjacent-pixel
correlation analysis, 3,000 horizontally adjacent pixel pairs are sampled
from the R channel of the same frame, and the Pearson correlation coefficient
is computed. The plaintext and ciphertext use the same sampled coordinates
within each run.

For the differential sensitivity test, only the R-channel value of the first
pixel in the first input frame is perturbed according to
\[
R_{111}^{\prime}
=
\operatorname{mod}(R_{111}+0.01,1).
\]
The decomposition and encryption procedures are then repeated using the
perturbed input. Thus, the perturbation is introduced at the input-video
level, rather than directly into the ciphertext or the core tensor.

For each RGB channel, let $C^{(1)}$ and $C^{(2)}$ denote the ciphertext arrays
obtained from the original and perturbed inputs, respectively. With
$N=n^2p$ denoting the total number of ciphertext elements in the considered
channel, define
\[
D(i,j,l)
=
\begin{cases}
1, & C^{(1)}(i,j,l)\neq C^{(2)}(i,j,l),\\
0, & C^{(1)}(i,j,l)=C^{(2)}(i,j,l).
\end{cases}
\]
The number of pixel change rate (NPCR) and unified average changing intensity
(UACI)~\cite{NCPRUACI} are given by
\[
\operatorname{NPCR}
=
\frac{100}{N}
\sum_{i,j,l}
D(i,j,l)\%,
\]
and
\[
\operatorname{UACI}
=
\frac{100}{255N}
\sum_{i,j,l}
\left|
C^{(1)}(i,j,l)-C^{(2)}(i,j,l)
\right|\%.
\]
The values are computed separately for the three RGB channels and then
averaged.

For two independent arrays uniformly distributed over
$\{0,\ldots,255\}$, the expected reference values are
\[
\operatorname{NPCR}_{\rm ref}
=
100\frac{255}{256}
\approx 99.6094\%,
\qquad
\operatorname{UACI}_{\rm ref}
=
100\frac{257}{768}
\approx 33.4635\%.
\]
These values are used as reference levels for interpreting the observed
differential sensitivity.

Since the initial states of the Logistic sequences depend on the
input-derived core tensor, perturbing the input may also alter the generated
keystreams. Hence, the reported NPCR and UACI characterize the sensitivity of
the complete implemented procedure.

\subsection{Reconstruction Accuracy and Visual Comparison}

\subsubsection{Decomposition Accuracy}
Table~\ref{tab:errors} reports the absolute Frobenius errors for the
QT-SVD and T-SVD decompositions. The QT-SVD errors range from
$1.459\times10^{-11}$ to $2.227\times10^{-11}$, while the T-SVD errors
range from $3.446\times10^{-12}$ to $1.054\times10^{-11}$.
Both decompositions therefore reproduce the input tensors with high
numerical accuracy.

\begin{table}[!htbp]
\centering\small
\caption{Absolute Frobenius errors of the decompositions.}
\label{tab:errors}
\setlength{\tabcolsep}{4pt}
\begin{tabular}{lrr}
\toprule
\multirow{2}{*}{Video}
& \multicolumn{2}{c}{$e_{\rm dec}$} \\
\cmidrule(lr){2-3}
& QT-SVD & T-SVD \\
\midrule
blackswan & $1.459\times10^{-11}$ & $8.149\times10^{-12}$ \\
bmx-trees & $1.474\times10^{-11}$ & $3.911\times10^{-12}$ \\
car-roundabout & $1.758\times10^{-11}$ & $3.446\times10^{-12}$ \\
dance-twirl & $1.735\times10^{-11}$ & $4.317\times10^{-12}$ \\
dog & $2.227\times10^{-11}$ & $1.054\times10^{-11}$ \\
\bottomrule
\end{tabular}
\end{table}

\subsubsection{Reconstruction Quality}

Table~\ref{tab:reconstruction} summarizes the video-averaged PSNR and SSIM
values obtained after core encryption and recovery.

\begin{table}[!htbp]
\centering\small
\caption{Reconstruction quality after core encryption and recovery.}
\label{tab:reconstruction}
\setlength{\tabcolsep}{4pt}
\begin{tabular}{lrrrrr}
\toprule
Video & \multicolumn{2}{c}{PSNR (dB)} & \multicolumn{2}{c}{SSIM} & Gain (dB) \\
\cmidrule(lr){2-3}
\cmidrule(lr){4-5}
\cmidrule(lr){6-6}
& QT-SVD & T-SVD & QT-SVD & T-SVD & $\Delta$PSNR \\
\midrule
blackswan & 67.4470 & 62.8259 & 0.999952 & 0.999864 & 4.6211 \\
bmx-trees & 57.3380 & 41.0254 & 0.999527 & 0.980869 & 16.3126 \\
car-roundabout & 59.3204 & 46.7411 & 0.999625 & 0.993376 & 12.5793 \\
dance-twirl & 68.3103 & 48.9359 & 0.999953 & 0.996055 & 19.3744 \\
dog & 72.5327 & 46.5558 & 0.999986 & 0.994785 & 25.9769 \\
\midrule
Five-video mean & 64.9897 & 49.2168 & 0.999809 & 0.992990 & 15.7730 \\
\bottomrule
\end{tabular}
\end{table}
QT-SVD yields higher PSNR and SSIM than T-SVD for all five test sequences.
The five-video mean PSNR is $64.9897$~dB for QT-SVD and $49.2168$~dB for
T-SVD, while the corresponding mean SSIM values are $0.999809$ and
$0.992990$, respectively. The PSNR difference ranges from $4.6211$~dB for
\textit{blackswan} to $25.9769$~dB for \textit{dog}, with a five-video mean
of $15.7730$~dB. Since the SSIM values for both methods are already close to
one, only the PSNR difference is included as an additional comparison measure
in Table~\ref{tab:reconstruction}.

\subsubsection{Visual Comparison}

Figures~\ref{fig:qt1}--\ref{fig:qt50} show the QT-SVD results at frames
1, 25, and 50. In each figure, the five columns correspond to
\textit{blackswan}, \textit{bmx-trees}, \textit{car-roundabout},
\textit{dance-twirl}, and \textit{dog}, while the three rows show the original
RGB frames, encrypted-core visualizations, and reconstructed RGB frames. The reconstructed frames preserve the scene content, whereas the encrypted
cores exhibit noise-like patterns.

Figure~\ref{fig:comparison} compares the QT-SVD and T-SVD results at frame~25 for all five sequences. Both methods preserve the visual content of the original frames, while their reconstruction differences are small at the displayed scale. Therefore, Table~\ref{tab:reconstruction} reports the quantitative comparisons computed from the double-precision reconstructed RGB frames.

\begin{figure}[!htbp]
\centering
\begingroup\setlength{\tabcolsep}{0pt}\renewcommand{\arraystretch}{0}
\begin{tabular}{@{}ccccc@{}}
\makebox[.17\linewidth]{\scriptsize blackswan} & \makebox[.17\linewidth]{\scriptsize bmx-trees} & \makebox[.17\linewidth]{\scriptsize car-roundabout} & \makebox[.17\linewidth]{\scriptsize dance-twirl} & \makebox[.17\linewidth]{\scriptsize dog} \\[3pt]
\includegraphics[width=.18\linewidth]{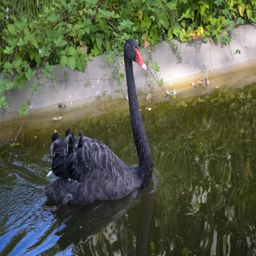} & \includegraphics[width=.18\linewidth]{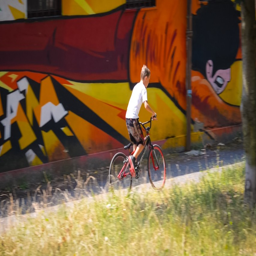} & \includegraphics[width=.18\linewidth]{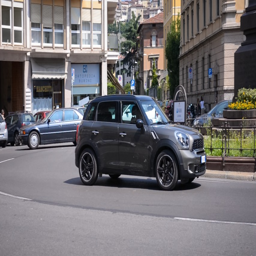} & \includegraphics[width=.18\linewidth]{} & \includegraphics[width=.18\linewidth]{} \\[-.2pt]
\includegraphics[width=.18\linewidth]{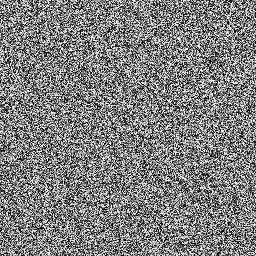} & \includegraphics[width=.18\linewidth]{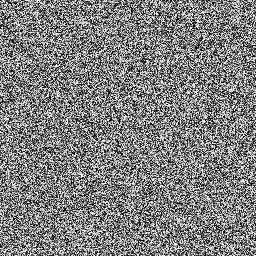} & \includegraphics[width=.18\linewidth]{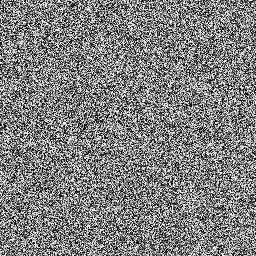} & \includegraphics[width=.18\linewidth]{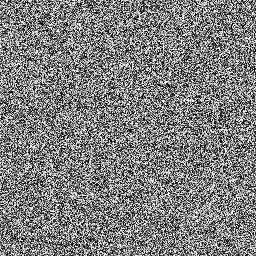} & \includegraphics[width=.18\linewidth]{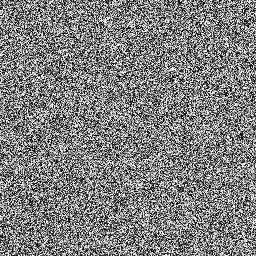} \\[-.2pt]
\includegraphics[width=.18\linewidth]{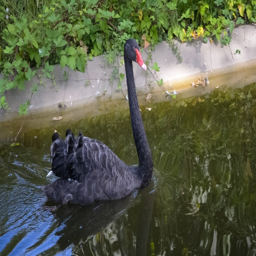} & \includegraphics[width=.18\linewidth]{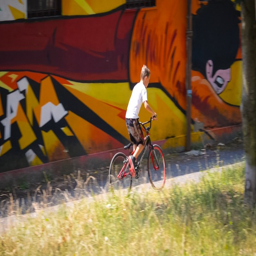} & \includegraphics[width=.18\linewidth]{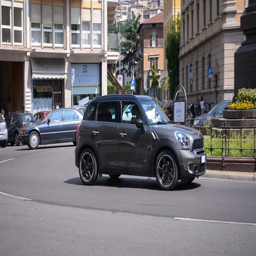} & \includegraphics[width=.18\linewidth]{} & \includegraphics[width=.18\linewidth]{} \\[-.2pt]
\end{tabular}\endgroup
\caption{QT-SVD results at frame 1.}
\label{fig:qt1}
\end{figure}
\begin{figure}[!htbp]
\centering
\begingroup\setlength{\tabcolsep}{0pt}\renewcommand{\arraystretch}{0}
\begin{tabular}{@{}ccccc@{}}
\makebox[.17\linewidth]{\scriptsize blackswan} & \makebox[.17\linewidth]{\scriptsize bmx-trees} & \makebox[.17\linewidth]{\scriptsize car-roundabout} & \makebox[.17\linewidth]{\scriptsize dance-twirl} & \makebox[.17\linewidth]{\scriptsize dog} \\[3pt]
\includegraphics[width=.18\linewidth]{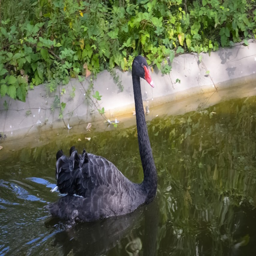} & \includegraphics[width=.18\linewidth]{} & \includegraphics[width=.18\linewidth]{} & \includegraphics[width=.18\linewidth]{} & \includegraphics[width=.18\linewidth]{} \\[-.2pt]
\includegraphics[width=.18\linewidth]{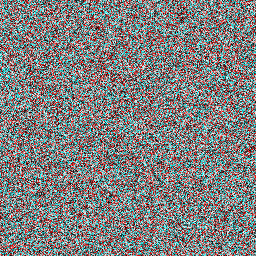} & \includegraphics[width=.18\linewidth]{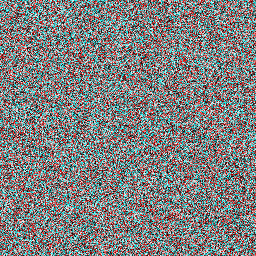} & \includegraphics[width=.18\linewidth]{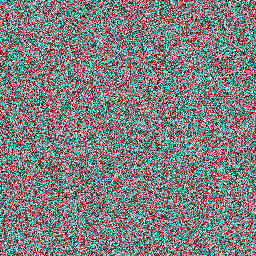} & \includegraphics[width=.18\linewidth]{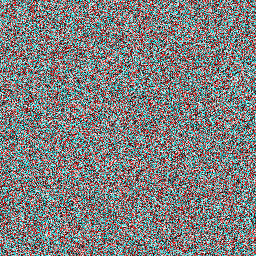} & \includegraphics[width=.18\linewidth]{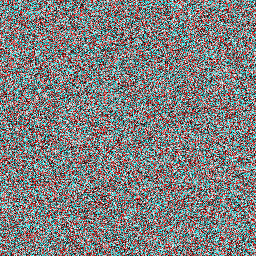} \\[-.2pt]
\includegraphics[width=.18\linewidth]{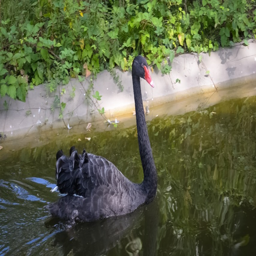} & \includegraphics[width=.18\linewidth]{} & \includegraphics[width=.18\linewidth]{} & \includegraphics[width=.18\linewidth]{} & \includegraphics[width=.18\linewidth]{} \\[-.2pt]
\end{tabular}\endgroup
\caption{QT-SVD results at frame 25.}
\label{fig:qt25}
\end{figure}
\begin{figure}[!htbp]
\centering
\begingroup\setlength{\tabcolsep}{0pt}\renewcommand{\arraystretch}{0}
\begin{tabular}{@{}ccccc@{}}
\makebox[.17\linewidth]{\scriptsize blackswan} & \makebox[.17\linewidth]{\scriptsize bmx-trees} & \makebox[.17\linewidth]{\scriptsize car-roundabout} & \makebox[.17\linewidth]{\scriptsize dance-twirl} & \makebox[.17\linewidth]{\scriptsize dog} \\[3pt]
\includegraphics[width=.18\linewidth]{} & \includegraphics[width=.18\linewidth]{} & \includegraphics[width=.18\linewidth]{} & \includegraphics[width=.18\linewidth]{} & \includegraphics[width=.18\linewidth]{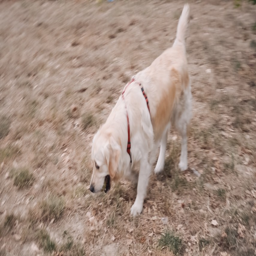} \\[-.2pt]
\includegraphics[width=.18\linewidth]{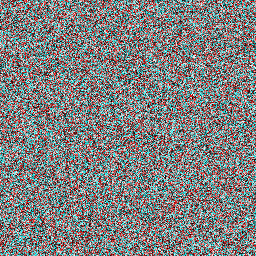} & \includegraphics[width=.18\linewidth]{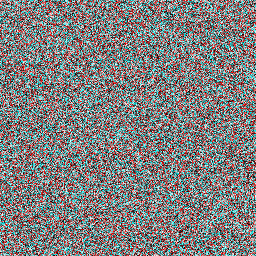} & \includegraphics[width=.18\linewidth]{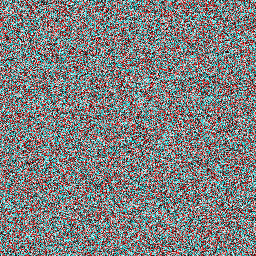} & \includegraphics[width=.18\linewidth]{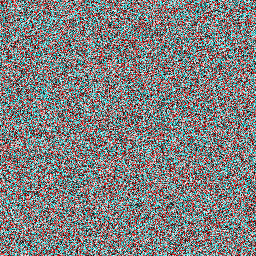} & \includegraphics[width=.18\linewidth]{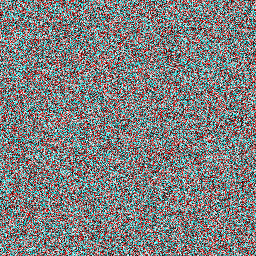} \\[-.2pt]
\includegraphics[width=.18\linewidth]{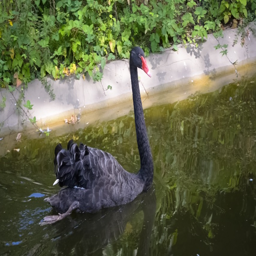} & \includegraphics[width=.18\linewidth]{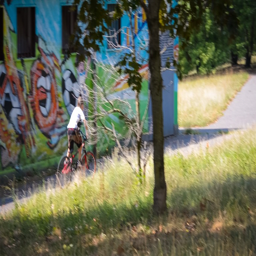} & \includegraphics[width=.18\linewidth]{} & \includegraphics[width=.18\linewidth]{} & \includegraphics[width=.18\linewidth]{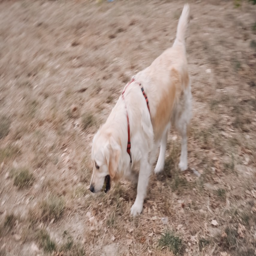} \\[-.2pt]
\end{tabular}\endgroup
\caption{QT-SVD results at frame 50.}
\label{fig:qt50}
\end{figure}

\clearpage
\begin{figure}[!htbp]
\centering
\begingroup\setlength{\tabcolsep}{0pt}\renewcommand{\arraystretch}{0}
\begin{tabular}{@{}ccccc@{}}
\makebox[.18\linewidth]{\scriptsize blackswan} & \makebox[.18\linewidth]{\scriptsize bmx-trees} & \makebox[.18\linewidth]{\scriptsize car-roundabout} & \makebox[.18\linewidth]{\scriptsize dance-twirl} & \makebox[.18\linewidth]{\scriptsize dog} \\[3pt]
\includegraphics[width=.18\linewidth]{figures/QTSVD/blackswan_plain_25.png} & \includegraphics[width=.18\linewidth]{figures/QTSVD/bmx-trees_plain_25.png} & \includegraphics[width=.18\linewidth]{figures/QTSVD/car-roundabout_plain_25.png} & \includegraphics[width=.18\linewidth]{figures/QTSVD/dance-twirl_plain_25.png} & \includegraphics[width=.18\linewidth]{figures/QTSVD/dog_plain_25.png} \\[-.2pt]
\includegraphics[width=.18\linewidth]{figures/QTSVD/blackswan_cipher_25.png} & \includegraphics[width=.18\linewidth]{figures/QTSVD/bmx-trees_cipher_25.png} & \includegraphics[width=.18\linewidth]{figures/QTSVD/car-roundabout_cipher_25.png} & \includegraphics[width=.18\linewidth]{figures/QTSVD/dance-twirl_cipher_25.png} & \includegraphics[width=.18\linewidth]{figures/QTSVD/dog_cipher_25.png} \\[-.2pt]
\includegraphics[width=.18\linewidth]{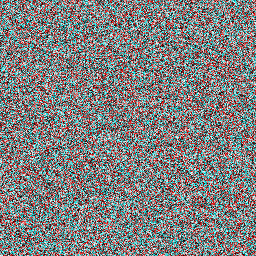} & \includegraphics[width=.18\linewidth]{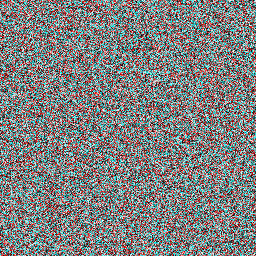} & \includegraphics[width=.18\linewidth]{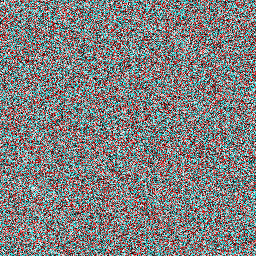} & \includegraphics[width=.18\linewidth]{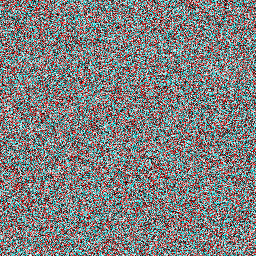} & \includegraphics[width=.18\linewidth]{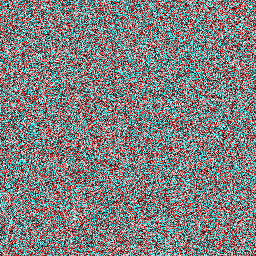} \\[-.2pt]
\includegraphics[width=.18\linewidth]{figures/QTSVD/blackswan_decrypted_25.png} & \includegraphics[width=.18\linewidth]{figures/QTSVD/bmx-trees_decrypted_25.png} & \includegraphics[width=.18\linewidth]{figures/QTSVD/car-roundabout_decrypted_25.png} & \includegraphics[width=.18\linewidth]{figures/QTSVD/dance-twirl_decrypted_25.png} & \includegraphics[width=.18\linewidth]{figures/QTSVD/dog_decrypted_25.png} \\[-.2pt]
\includegraphics[width=.18\linewidth]{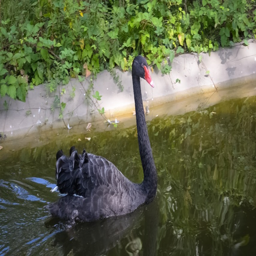} & \includegraphics[width=.18\linewidth]{} & \includegraphics[width=.18\linewidth]{} & \includegraphics[width=.18\linewidth]{} & \includegraphics[width=.18\linewidth]{} \\[-.2pt]
\end{tabular}\endgroup
\caption{QT-SVD and T-SVD results at frame~25. From top to bottom:
original RGB, QT-SVD encrypted core, T-SVD encrypted core,
QT-SVD reconstruction, and T-SVD reconstruction.}
\label{fig:comparison}
\end{figure}

\subsection{Statistical Analysis and Differential Sensitivity of the Encrypted Core}
\subsubsection{Histograms and Adjacent-Pixel Correlation}

Figure~\ref{fig:histograms} shows the histograms of the original RGB frames and the three encrypted-core components at frame~25 for the five test videos.
The original histograms vary with scene content, whereas the encrypted-core components exhibit substantially flatter intensity distributions. These histograms are used for qualitative comparison of the distribution shapes rather than as evidence of exact uniformity.

Figure~\ref{fig:correlations} compares the horizontal adjacent-pixel correlations in the R channel at frame~25. The correlation coefficients of
the original frames range from $0.920005$ to $0.980894$, indicating strong local dependence. After encryption, the corresponding QT-SVD coefficients
range from $-0.013116$ to $0.015532$, and the scatter plots become broadly dispersed, indicating a substantial reduction in horizontal adjacent-pixel correlation.

\begin{figure}[!htbp]
\centering

% ---------- First row: three videos ----------
\begin{minipage}[t]{0.31\linewidth}
\centering
{\scriptsize blackswan}\\[3pt]
\includegraphics[width=\linewidth]{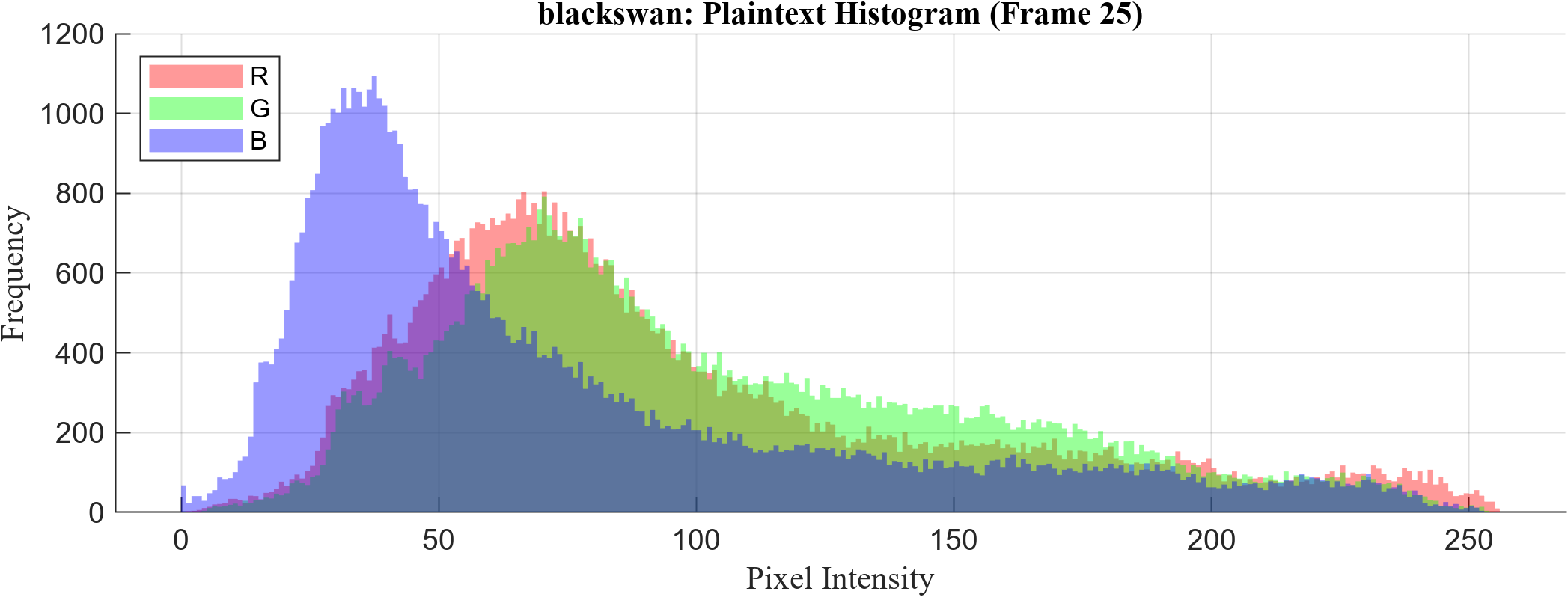}\\[-1pt]
\includegraphics[width=\linewidth]{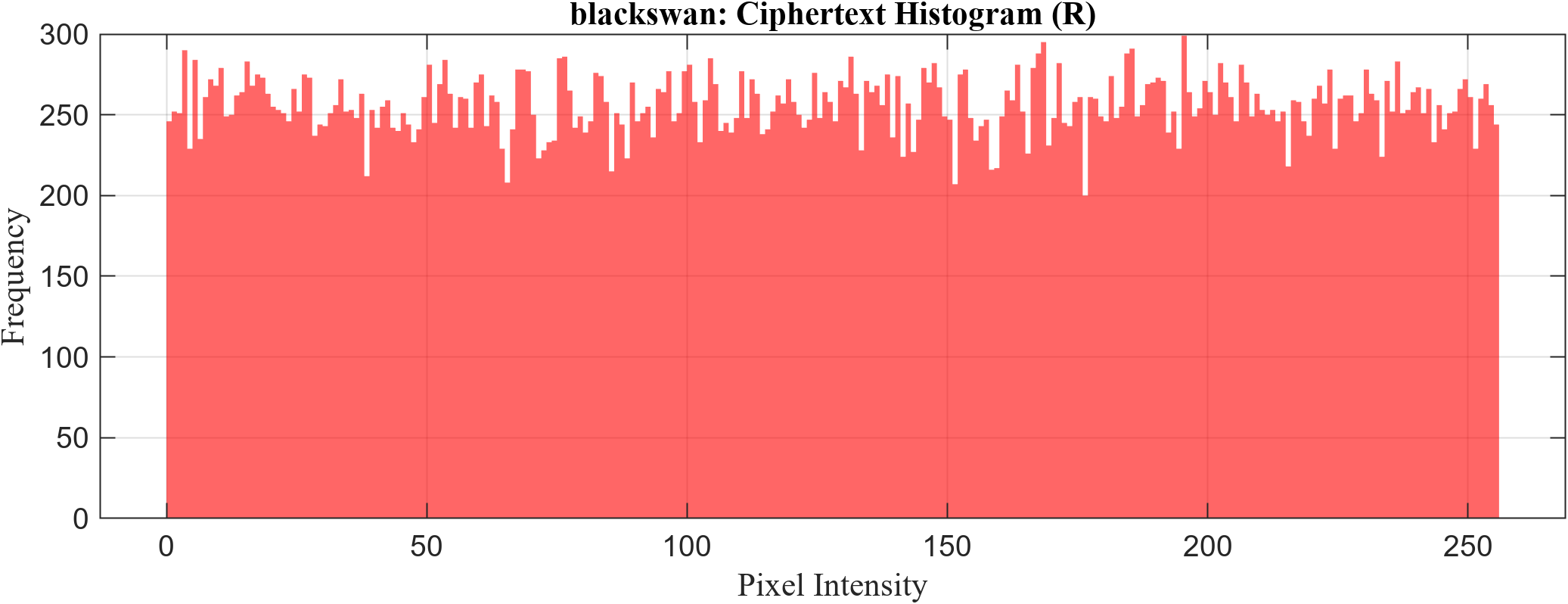}\\[-1pt]
\includegraphics[width=\linewidth]{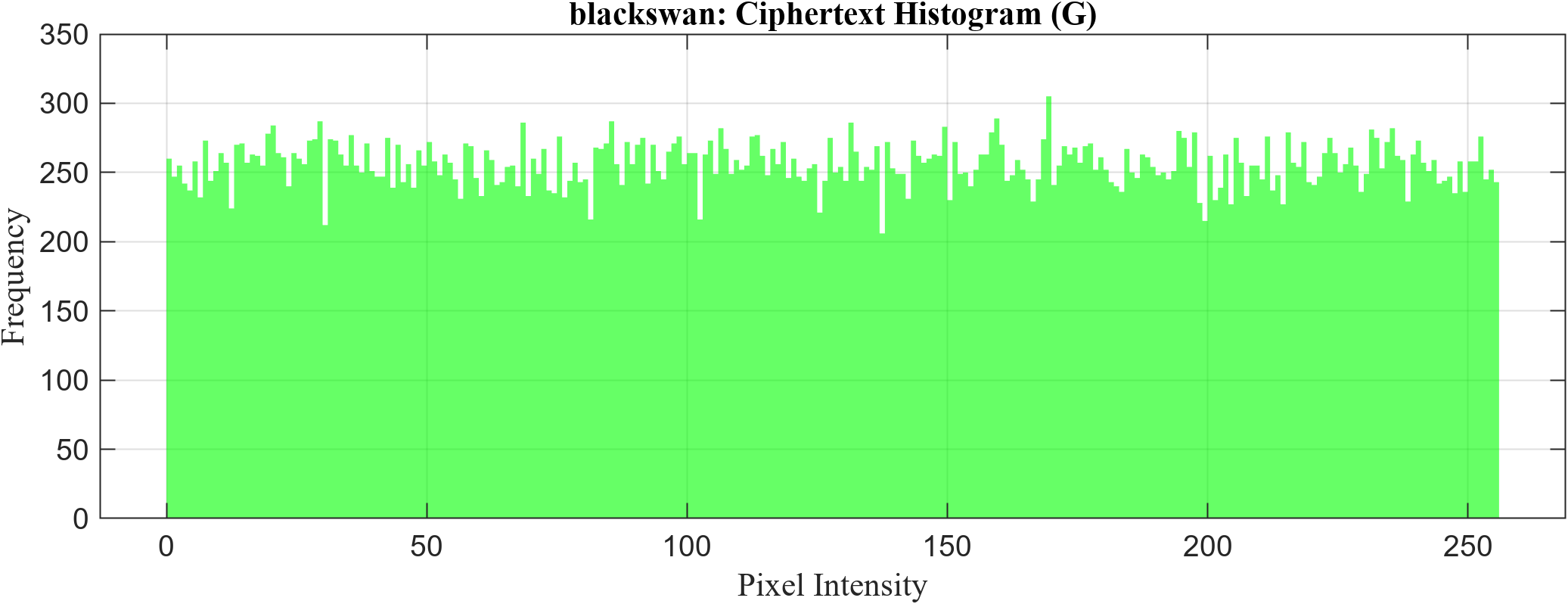}\\[-1pt]
\includegraphics[width=\linewidth]{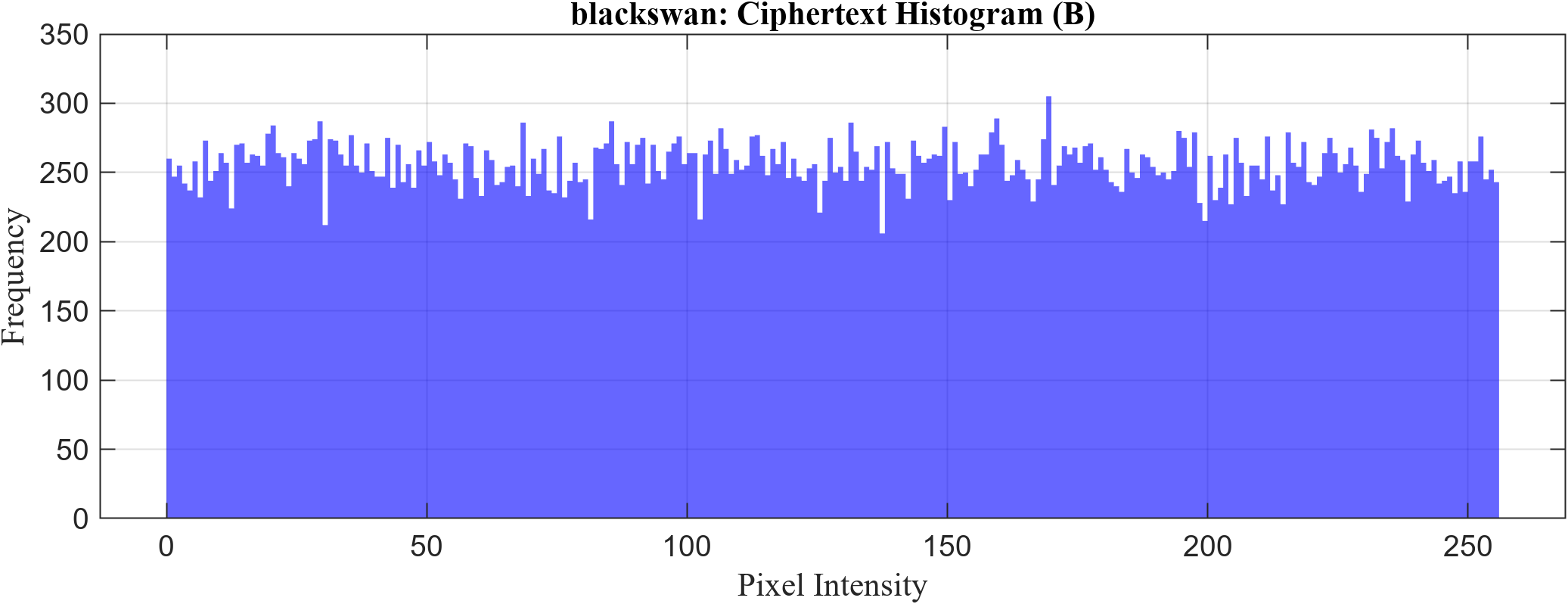}
\end{minipage}
\hfill
\begin{minipage}[t]{0.31\linewidth}
\centering
{\scriptsize bmx-trees}\\[3pt]
\includegraphics[width=\linewidth]{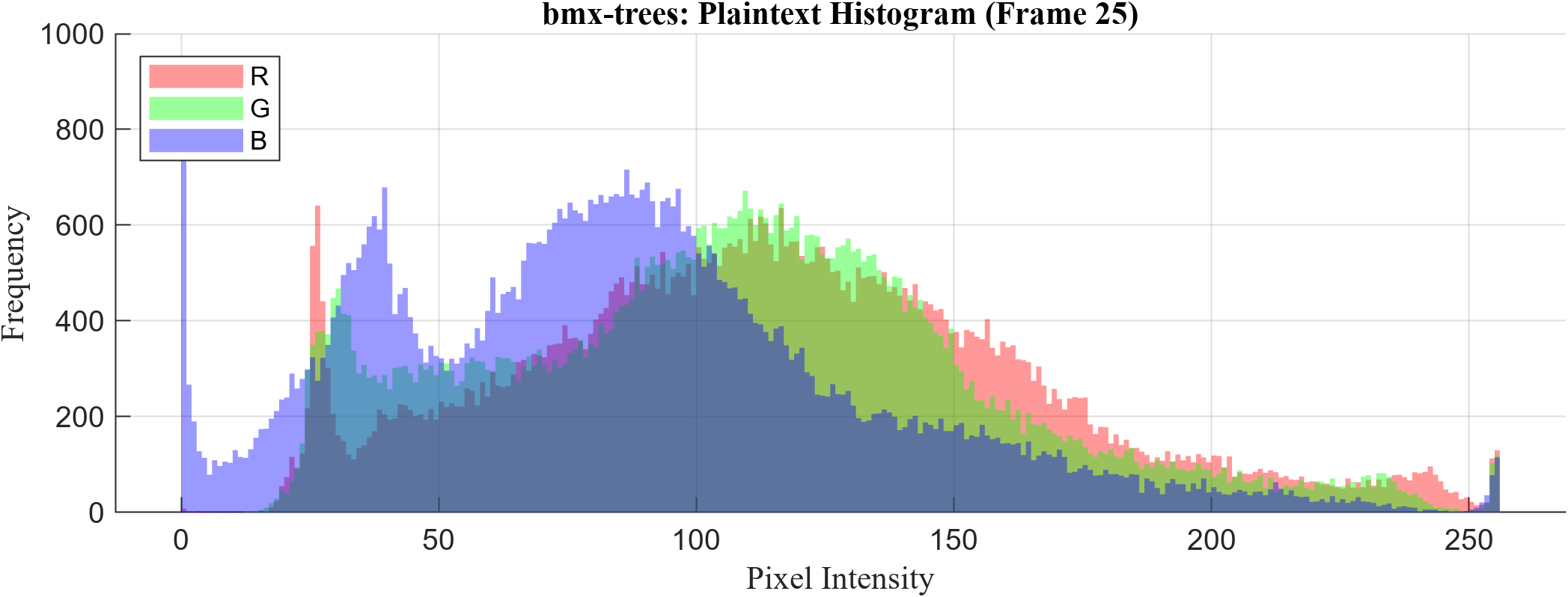}\\[-1pt]
\includegraphics[width=\linewidth]{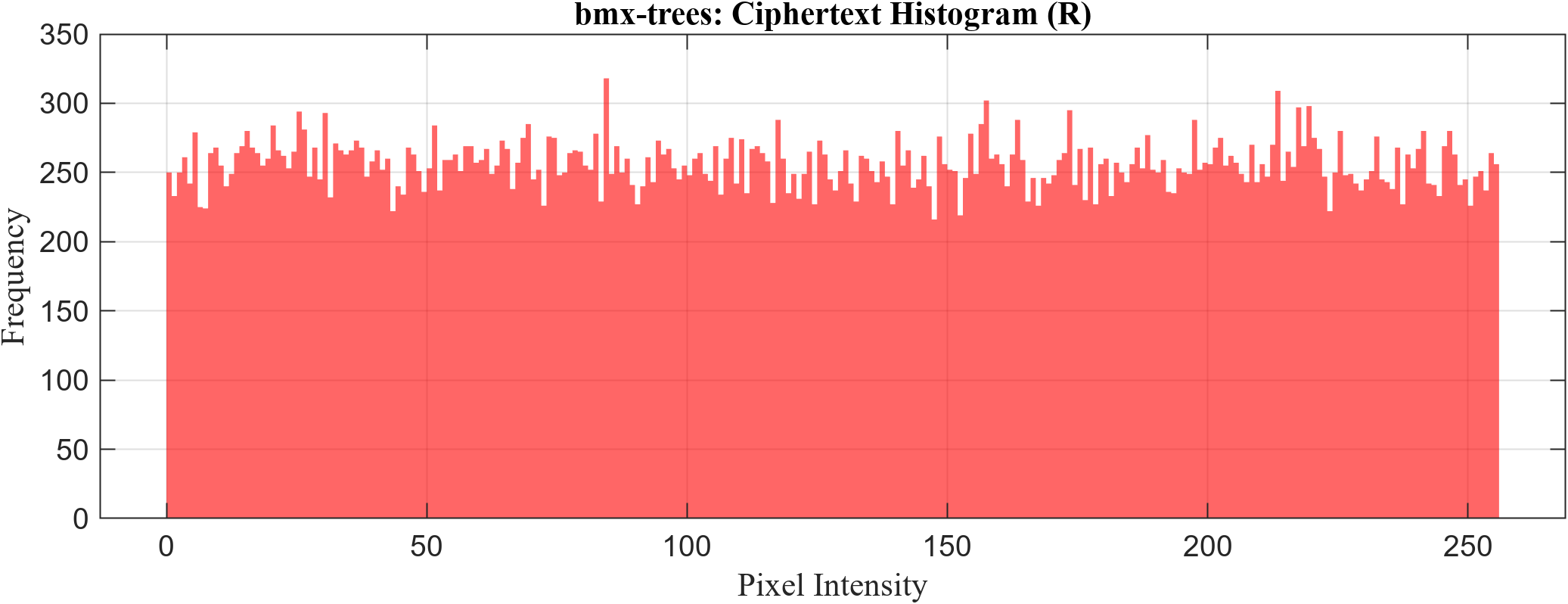}\\[-1pt]
\includegraphics[width=\linewidth]{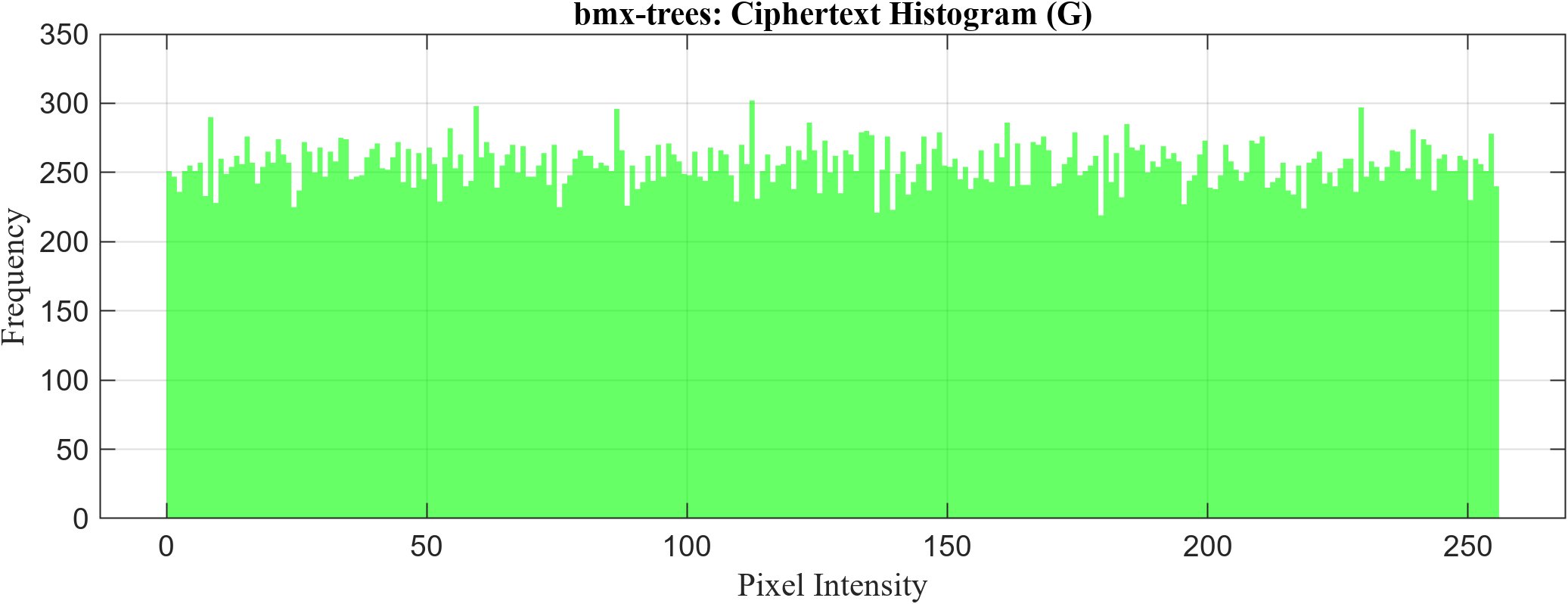}\\[-1pt]
\includegraphics[width=\linewidth]{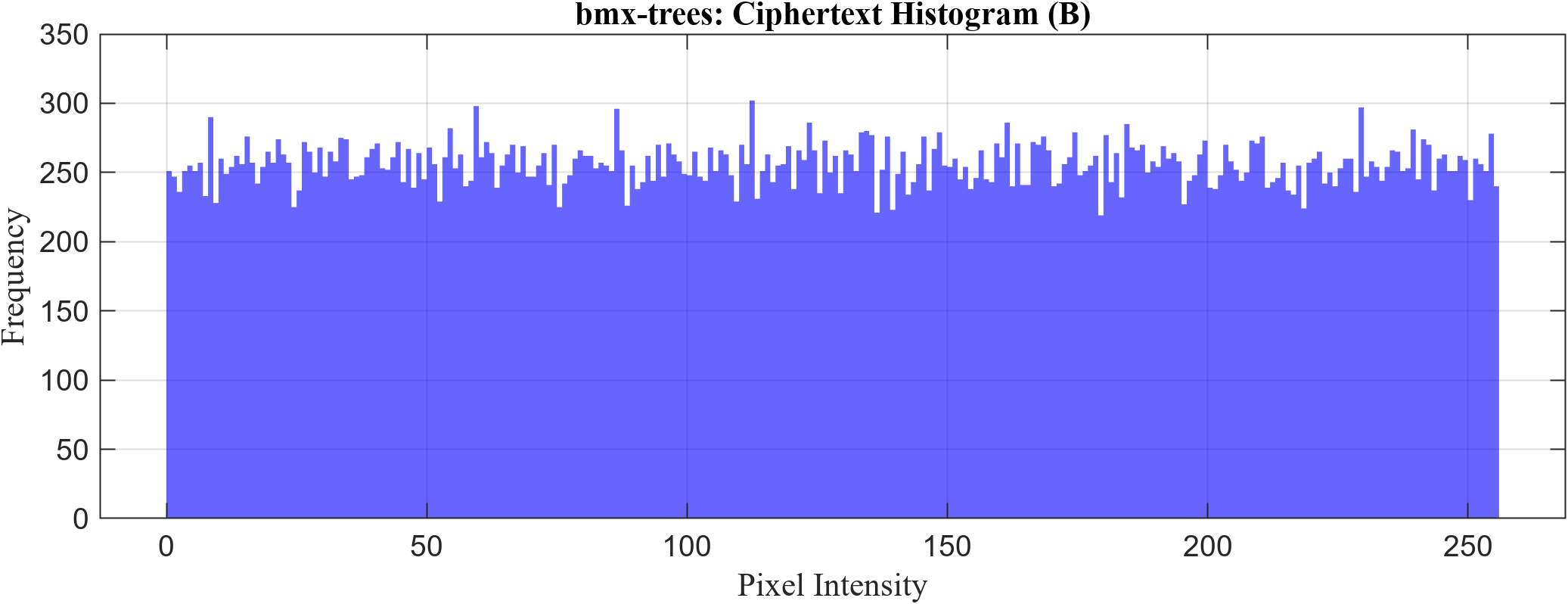}
\end{minipage}
\hfill
\begin{minipage}[t]{0.31\linewidth}
\centering
{\scriptsize car-roundabout}\\[3pt]
\includegraphics[width=\linewidth]{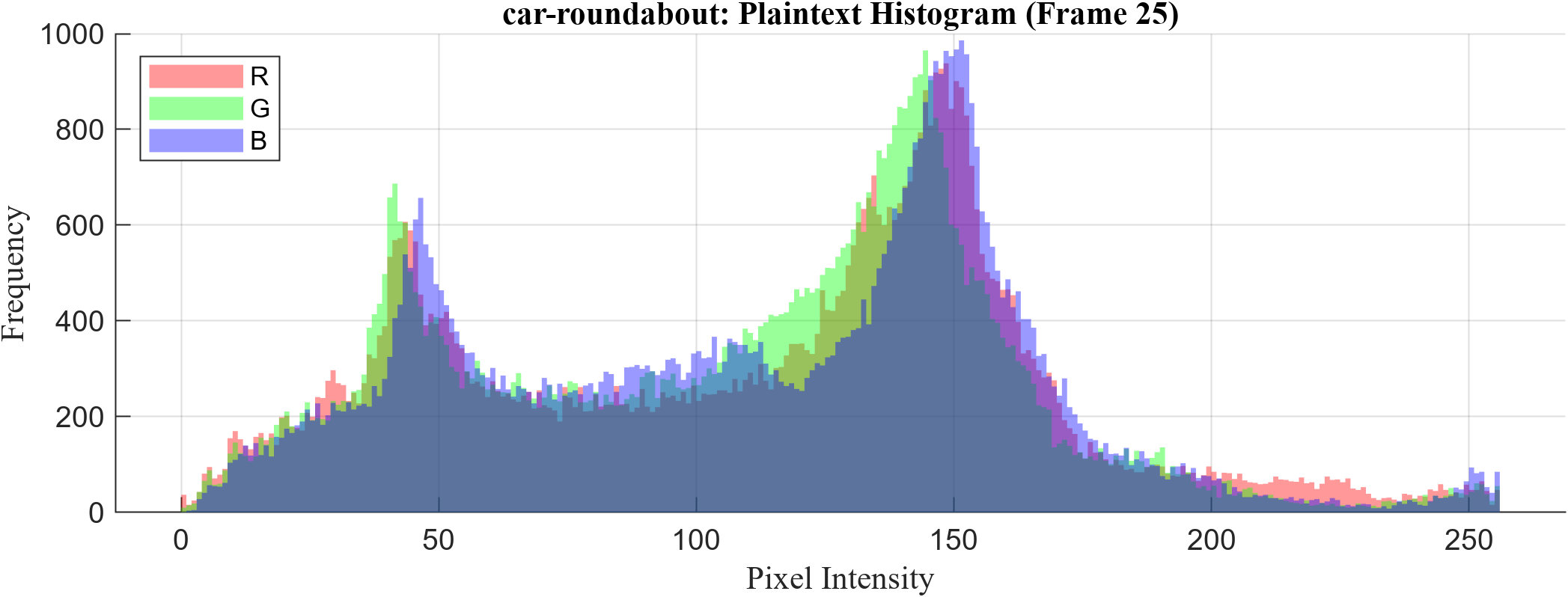}\\[-1pt]
\includegraphics[width=\linewidth]{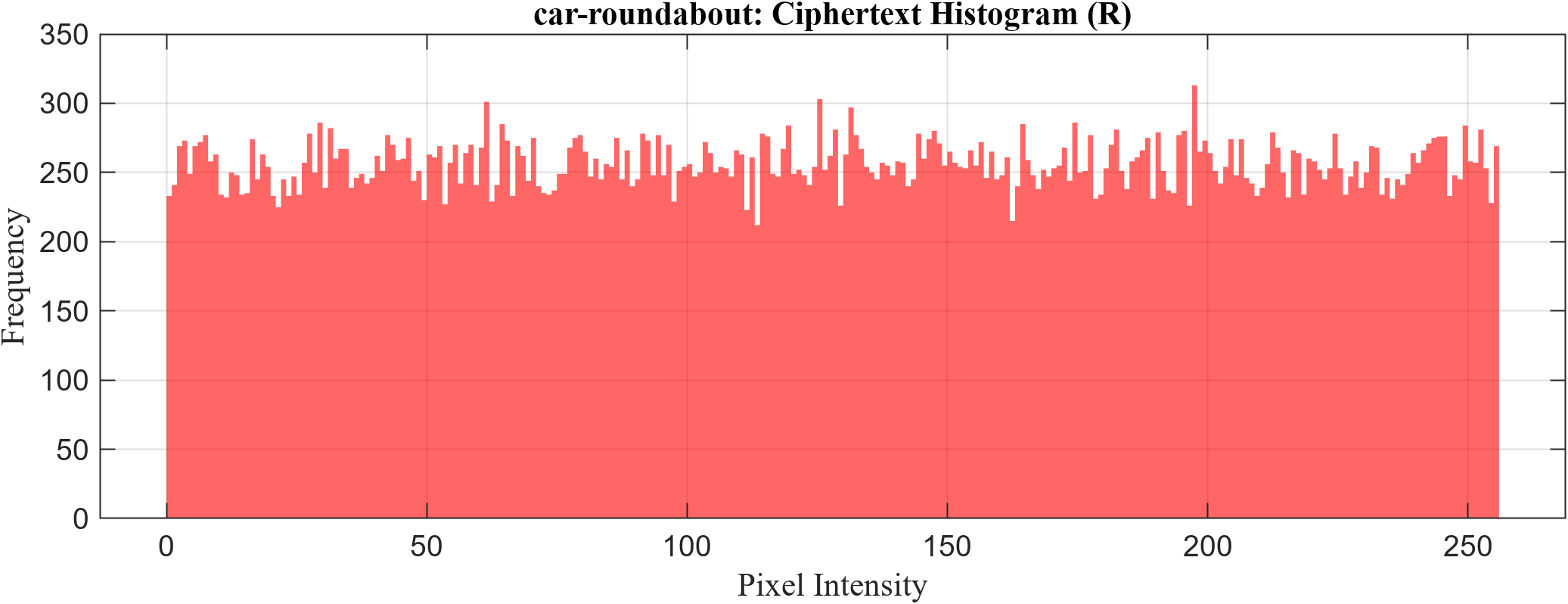}\\[-1pt]
\includegraphics[width=\linewidth]{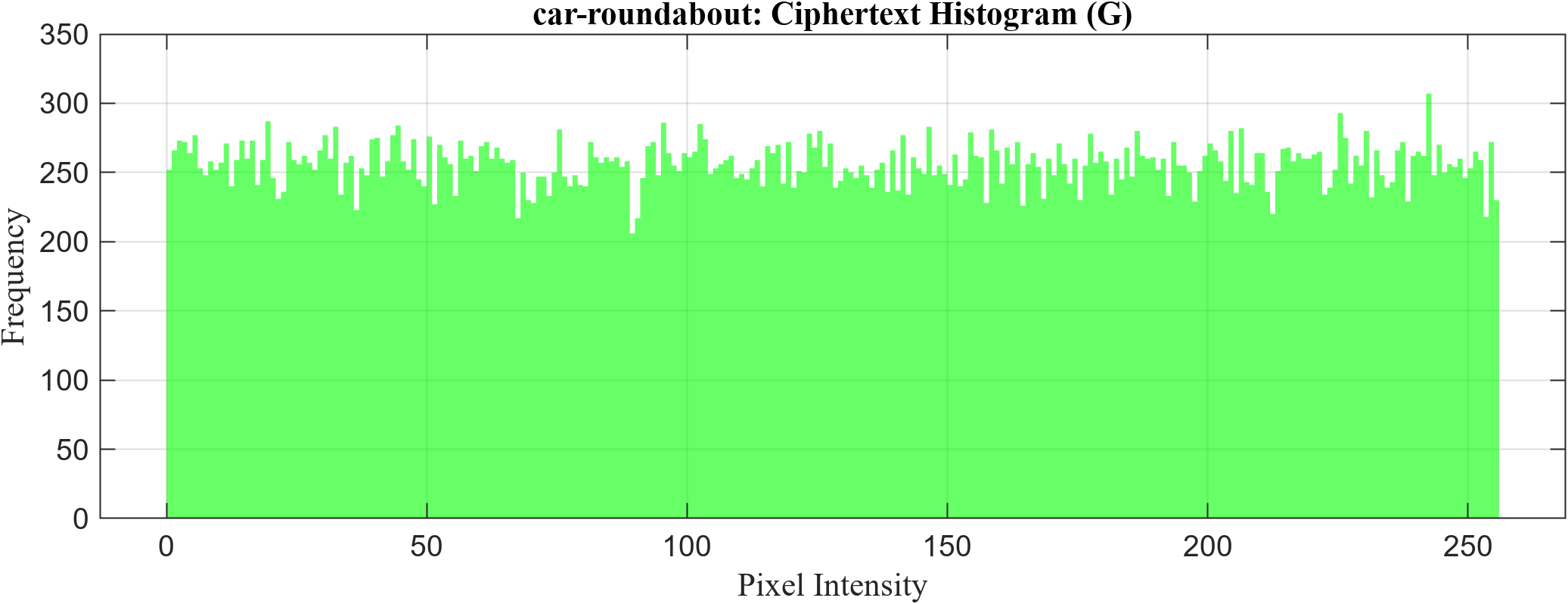}\\[-1pt]
\includegraphics[width=\linewidth]{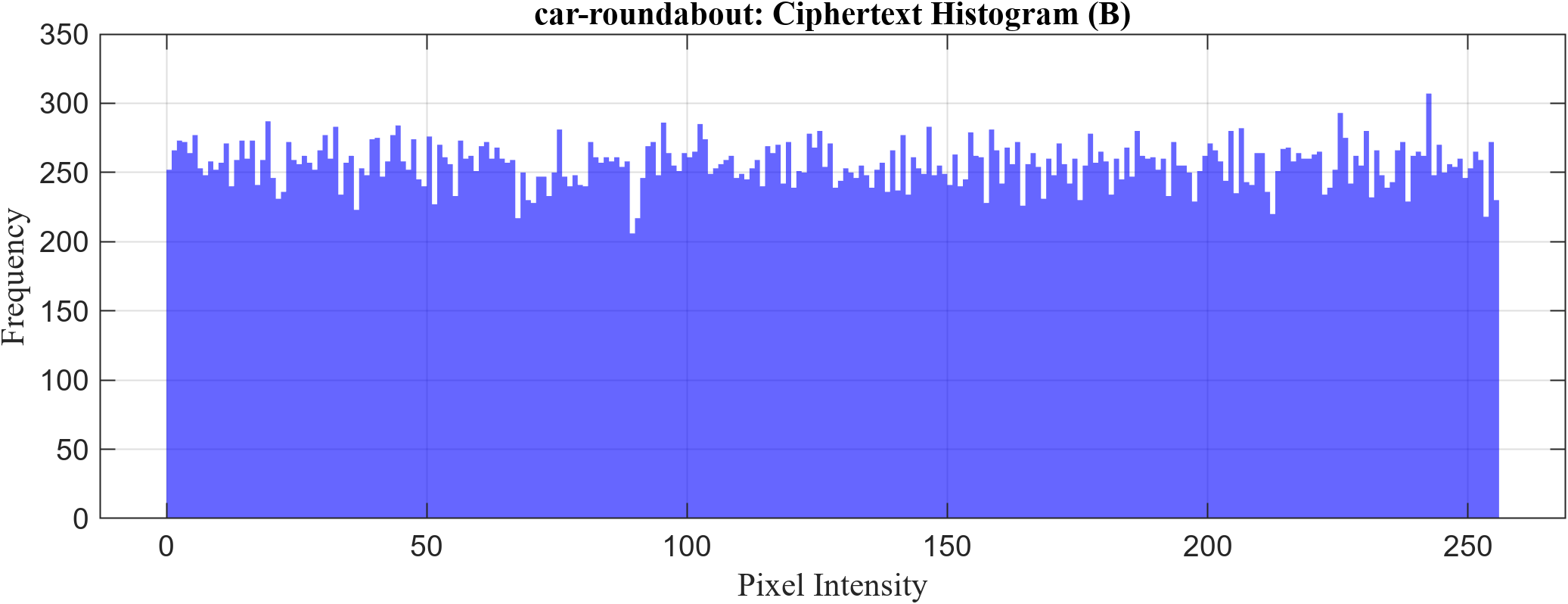}
\end{minipage}

\vspace{6pt}

% ---------- Second row: two videos ----------
\makebox[\linewidth][c]{%
\begin{minipage}[t]{0.31\linewidth}
\centering
{\scriptsize dance-twirl}\\[3pt]
\includegraphics[width=\linewidth]{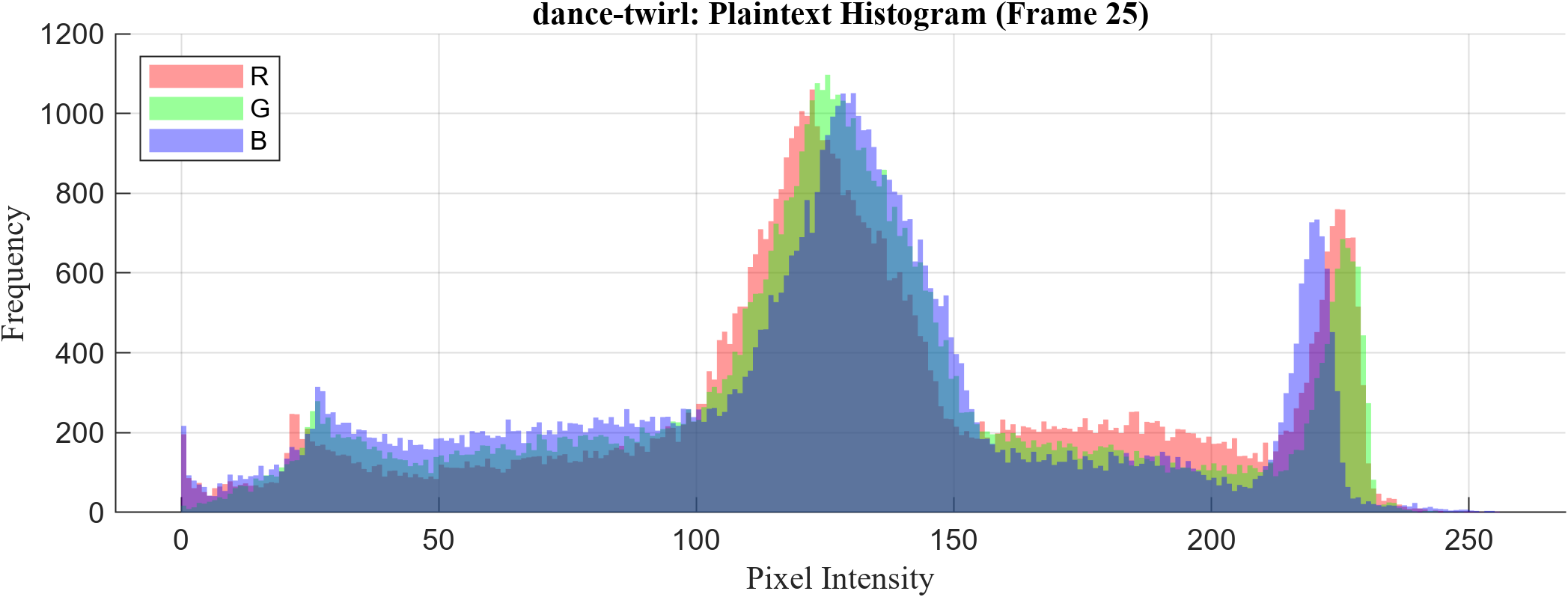}\\[-1pt]
\includegraphics[width=\linewidth]{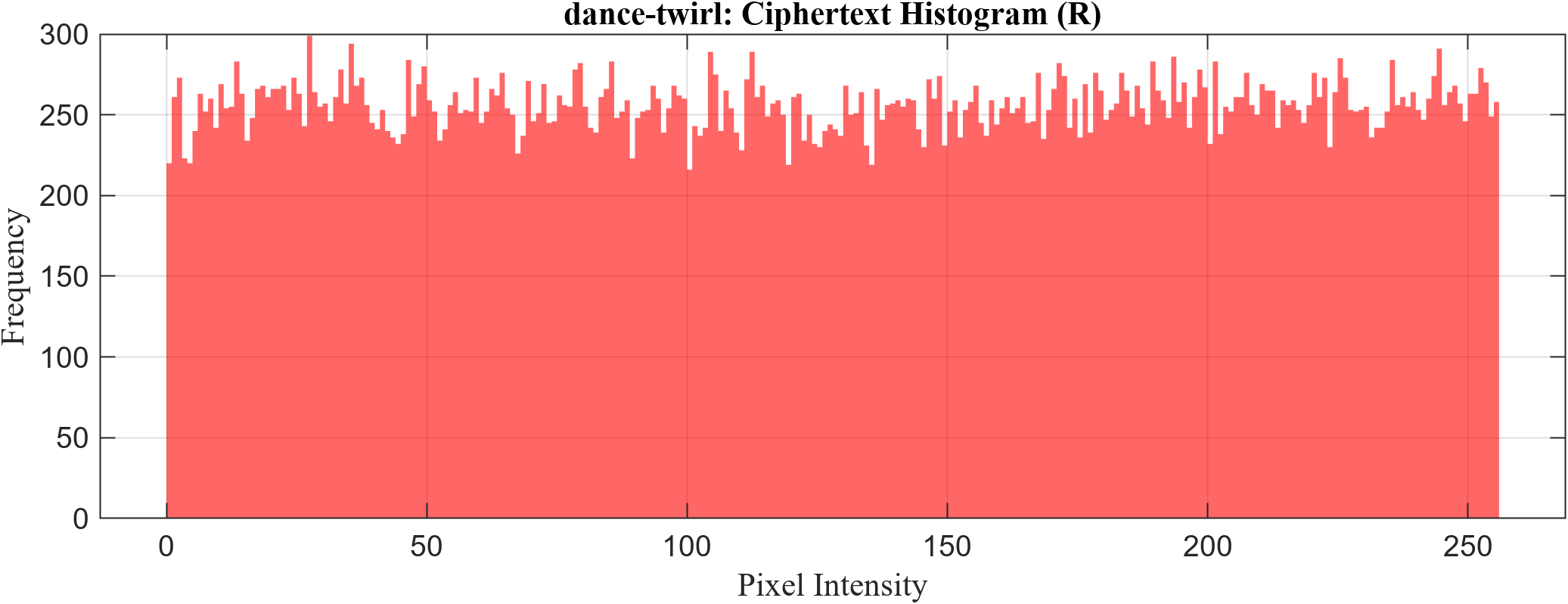}\\[-1pt]
\includegraphics[width=\linewidth]{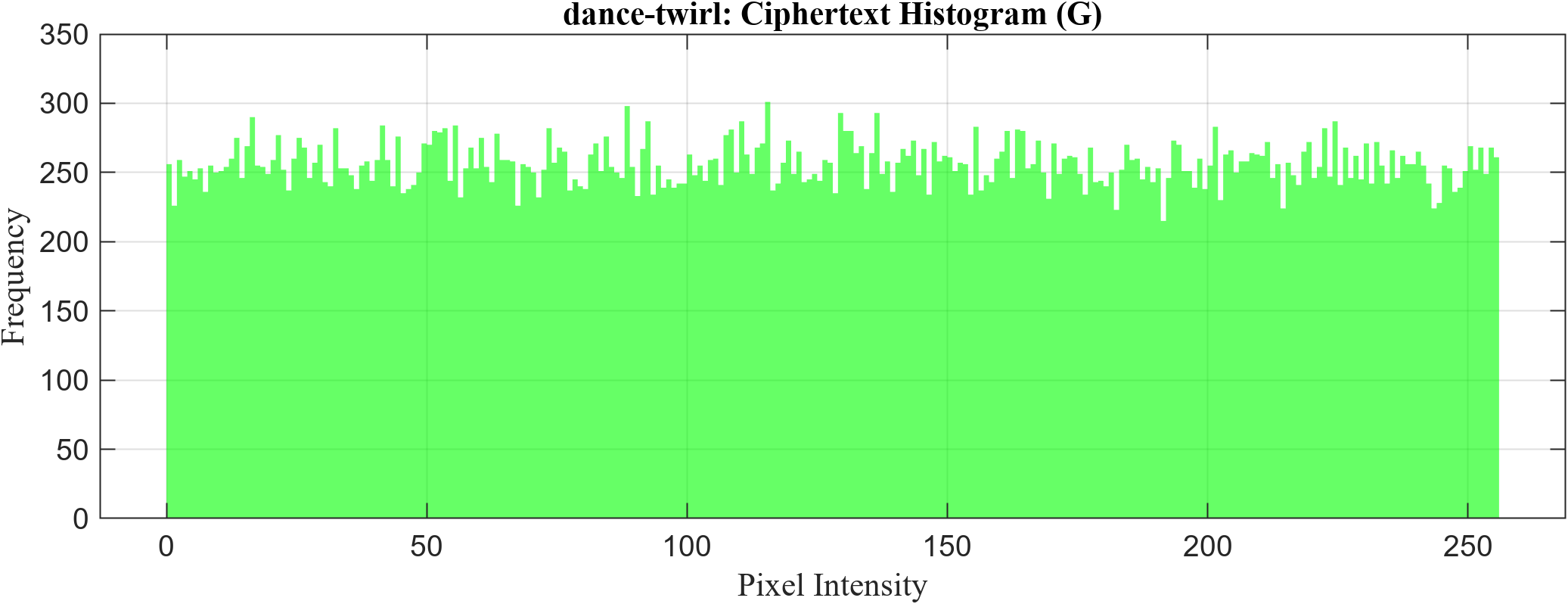}\\[-1pt]
\includegraphics[width=\linewidth]{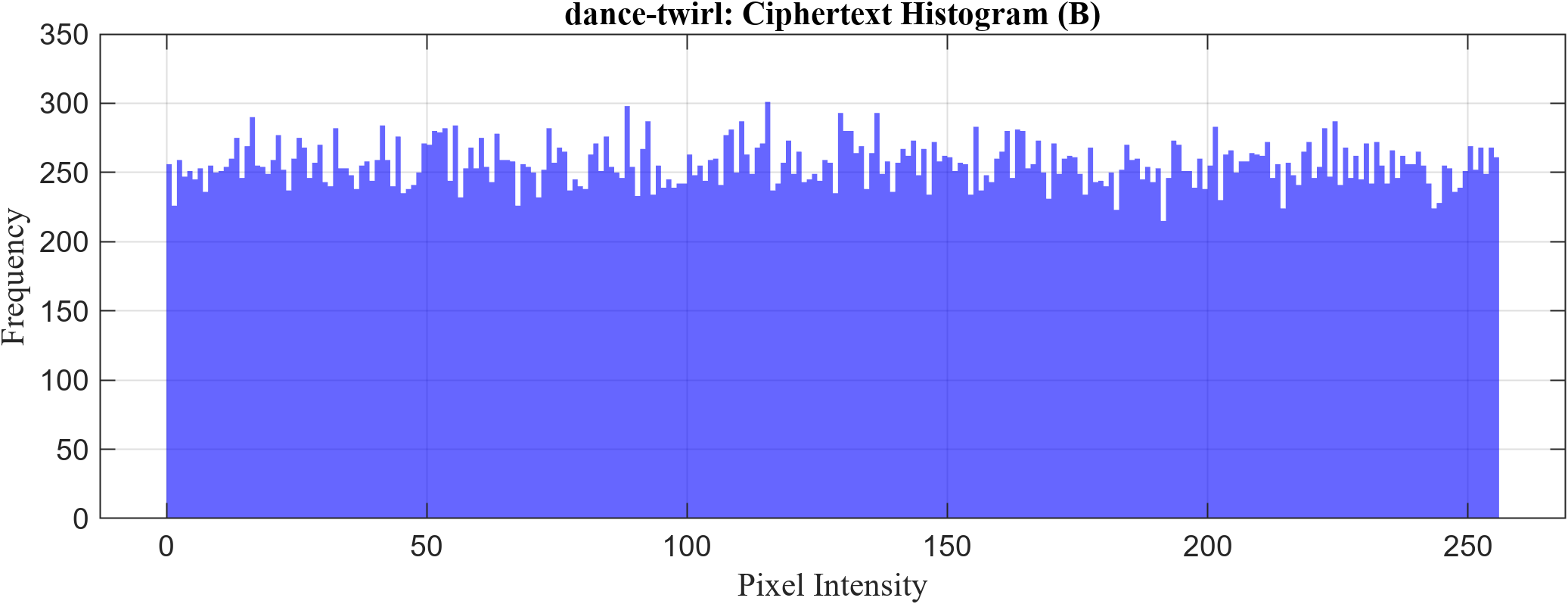}
\end{minipage}
\hspace{0.03\linewidth}
\begin{minipage}[t]{0.31\linewidth}
\centering
{\scriptsize dog}\\[3pt]
\includegraphics[width=\linewidth]{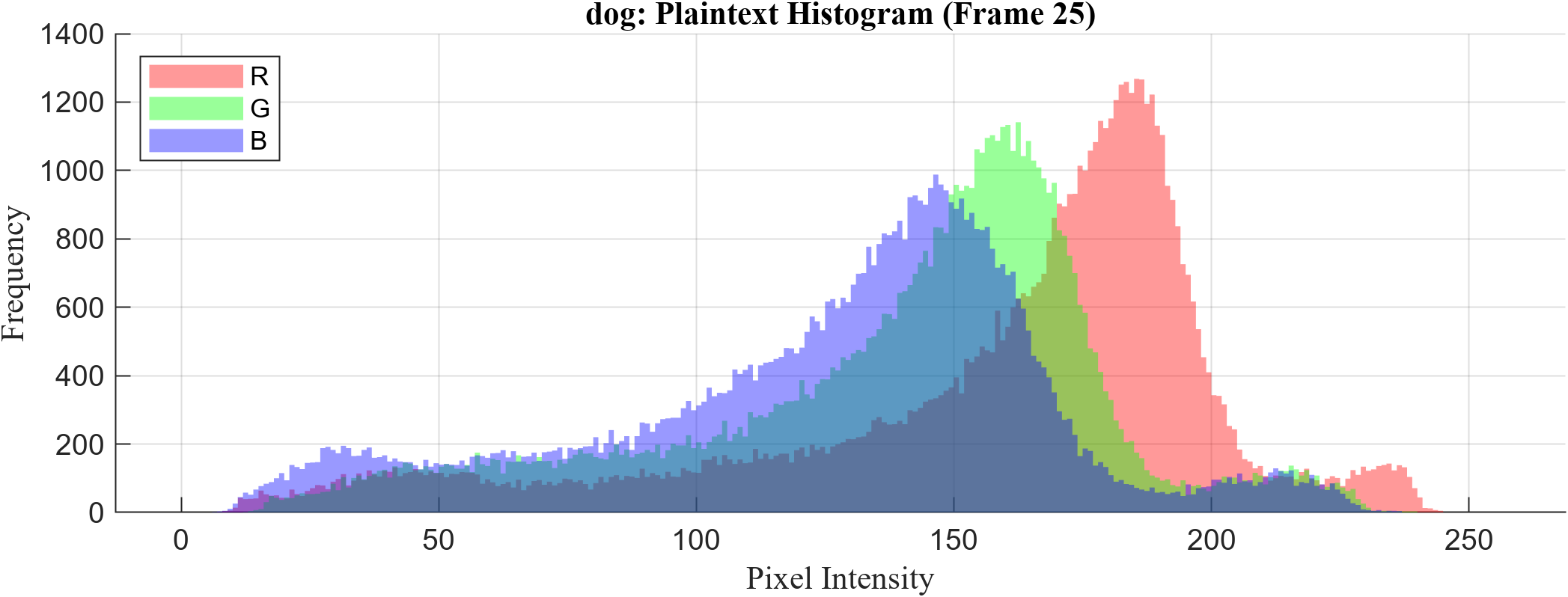}\\[-1pt]
\includegraphics[width=\linewidth]{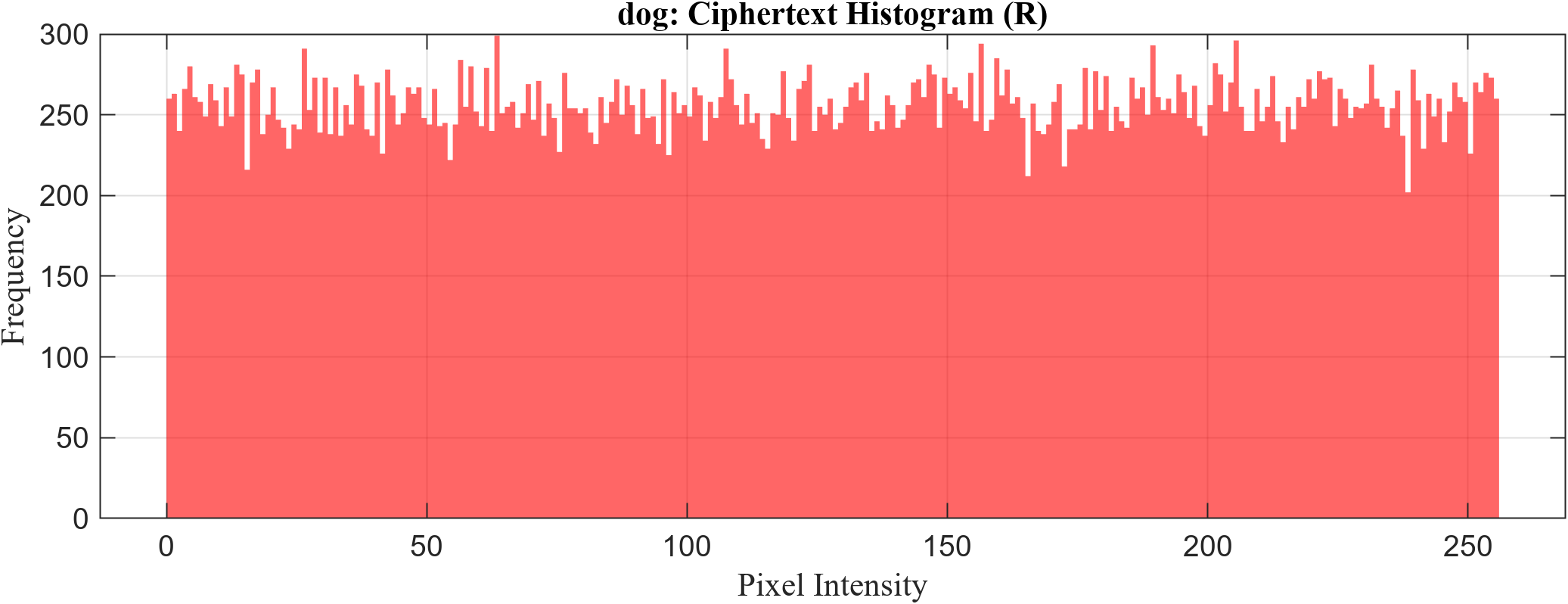}\\[-1pt]
\includegraphics[width=\linewidth]{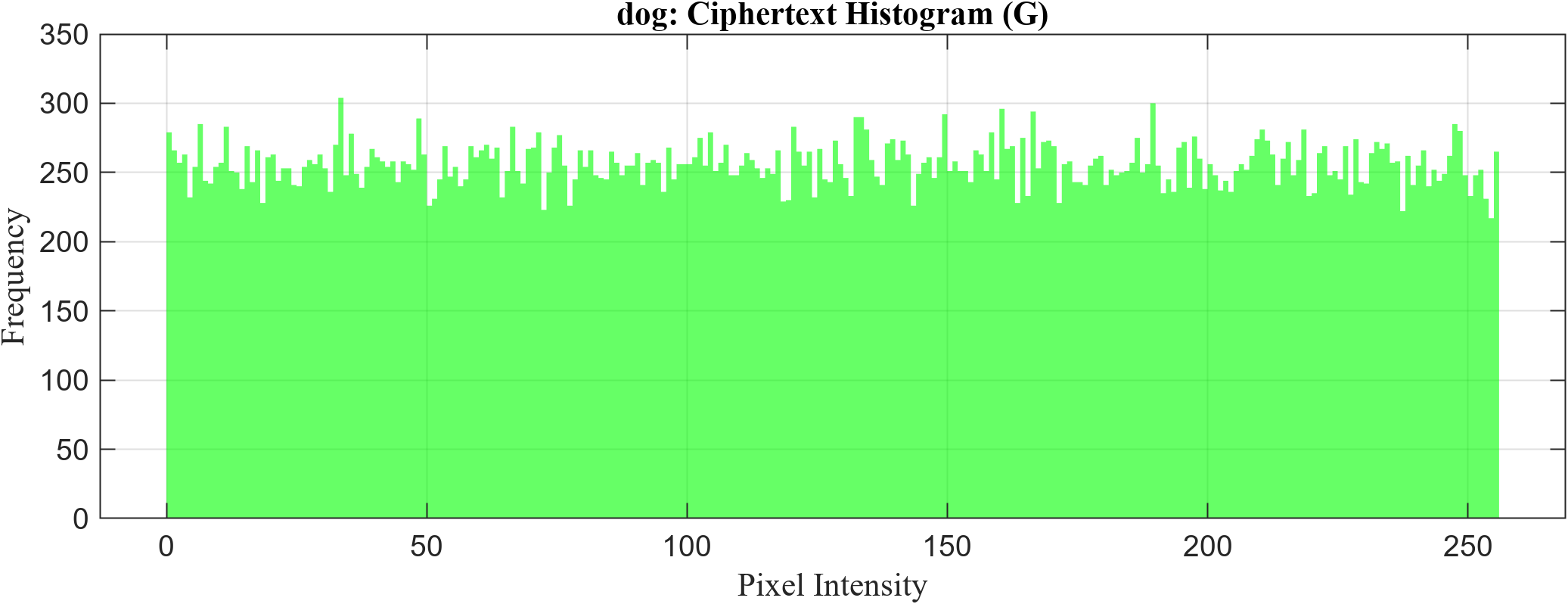}\\[-1pt]
\includegraphics[width=\linewidth]{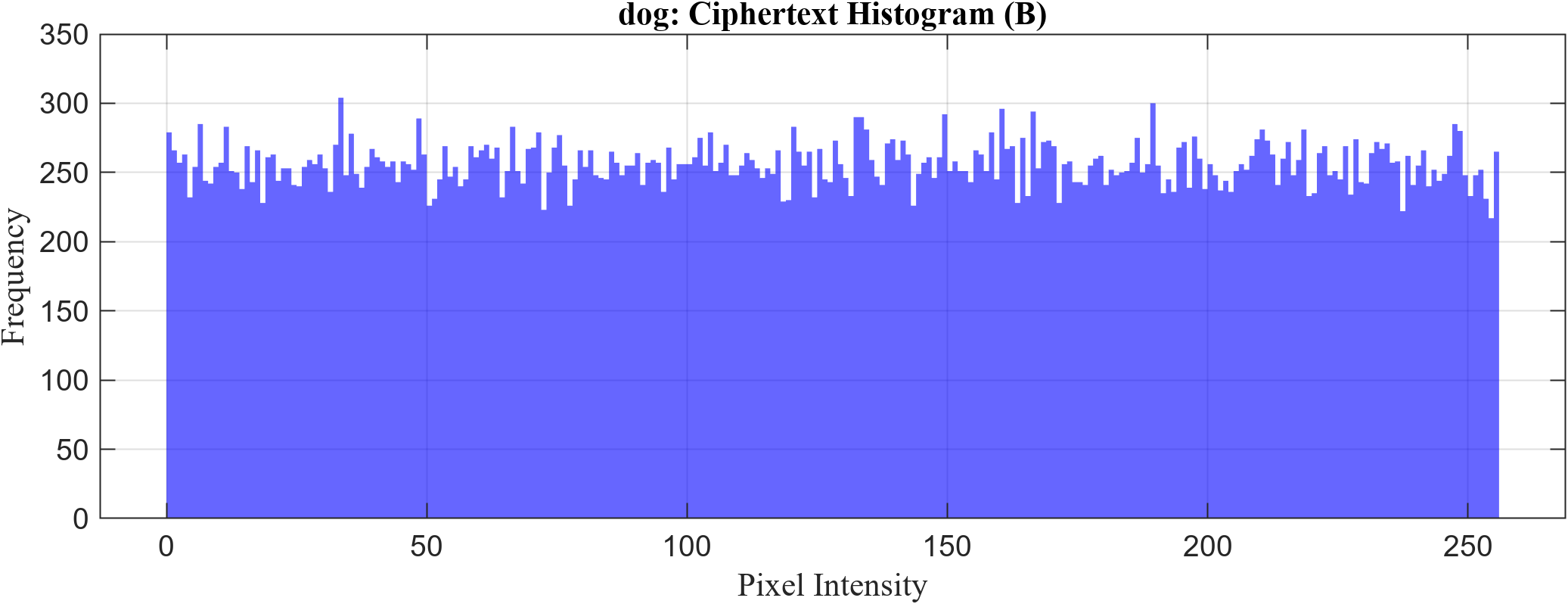}
\end{minipage}
}

\caption{QT-SVD histogram results at frame~25. For each video, the panels
from top to bottom show the overlaid histograms of the original RGB channels,
followed by the histograms of the encrypted-core R, G, and B components. Each histogram uses 256 intensity bins.}
\label{fig:histograms}
\end{figure}

\begin{figure}[!htbp]
\centering

% ---------- First row: three videos ----------
\begin{minipage}[t]{0.3\linewidth}
\centering
{\scriptsize blackswan}\\[3pt]
\includegraphics[width=\linewidth]{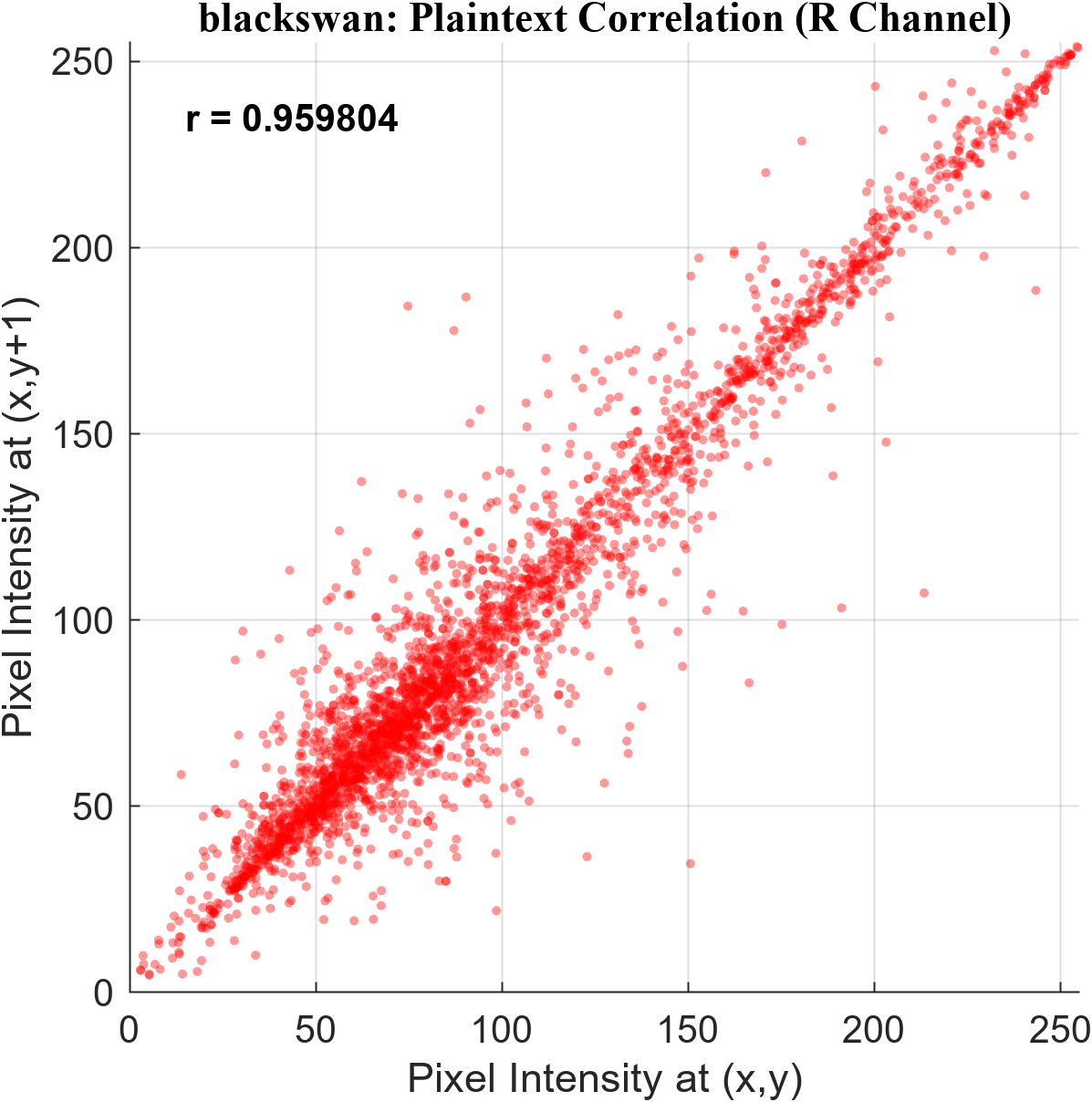}\\[-1pt]
\includegraphics[width=\linewidth]{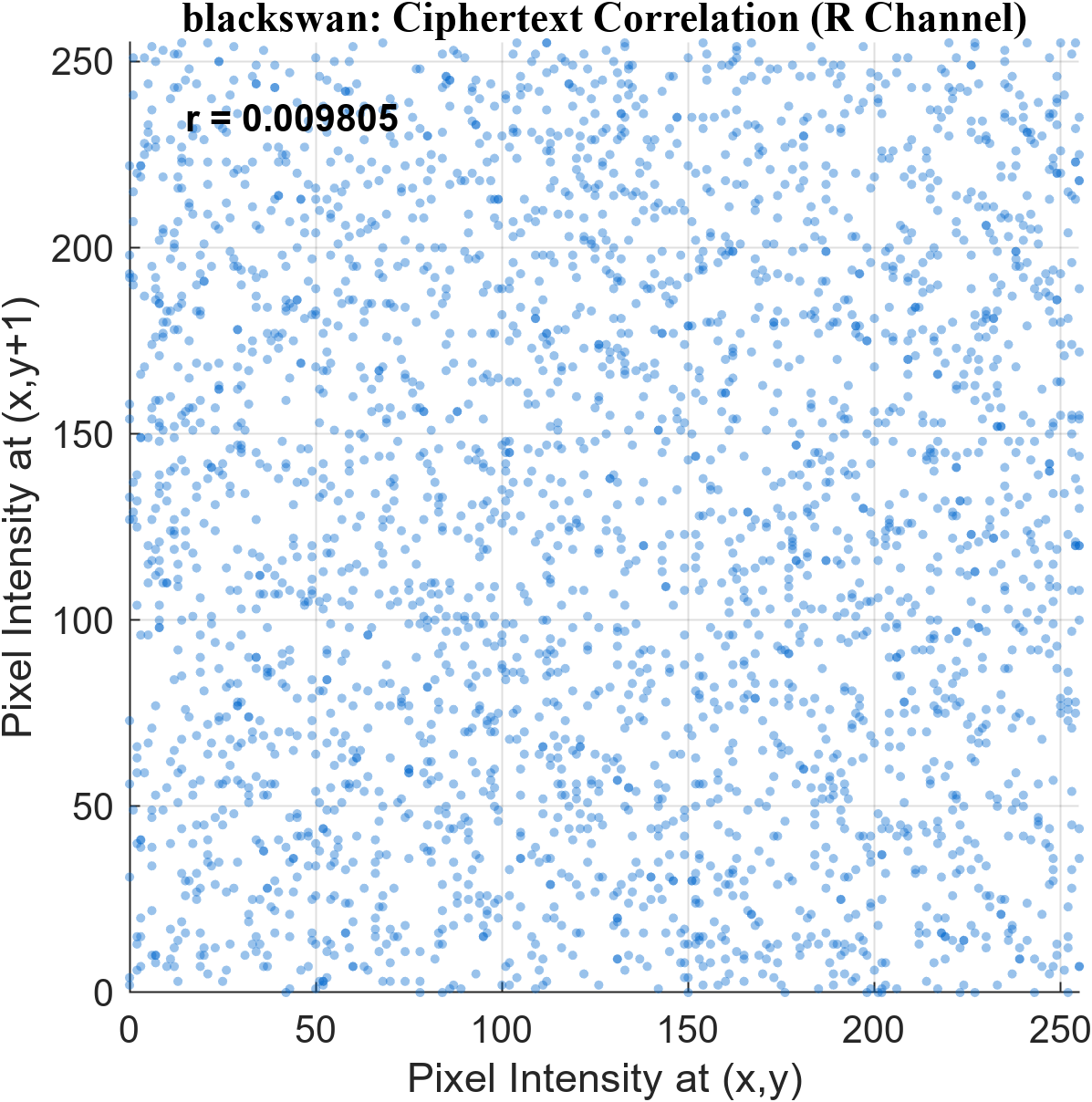}
\end{minipage}
\hfill
\begin{minipage}[t]{0.3\linewidth}
\centering
{\scriptsize bmx-trees}\\[3pt]
\includegraphics[width=\linewidth]{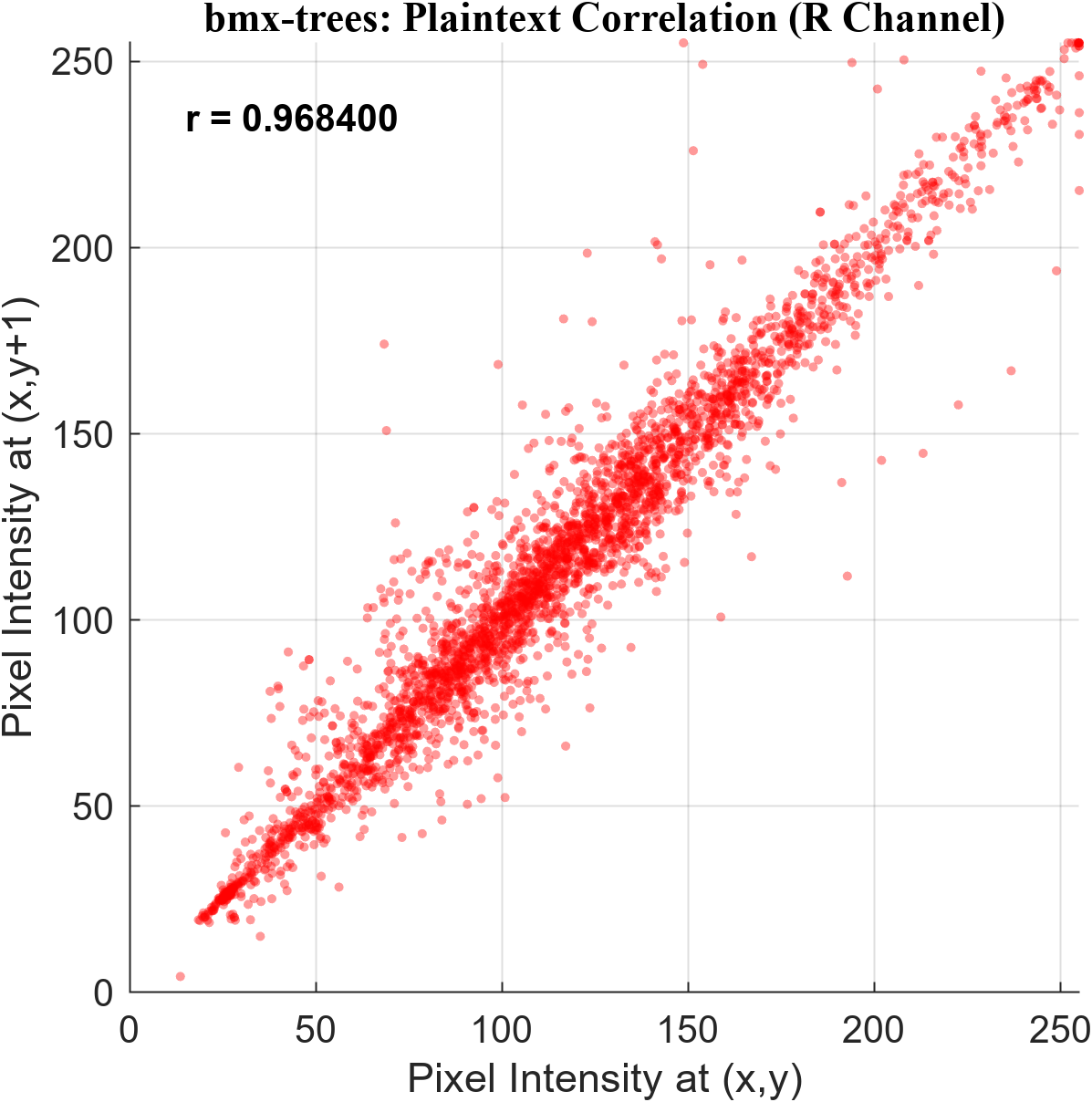}\\[-1pt]
\includegraphics[width=\linewidth]{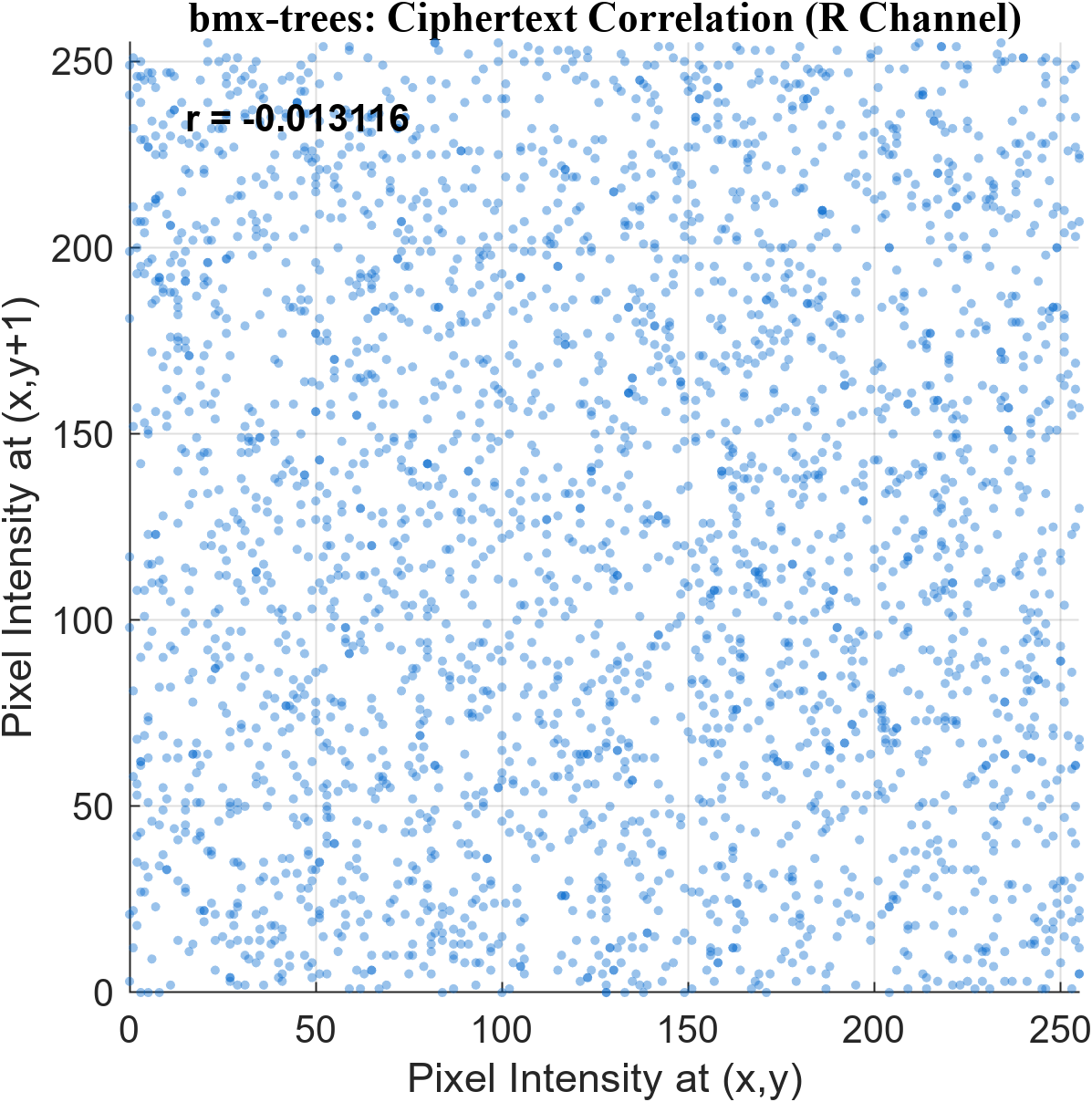}
\end{minipage}
\hfill
\begin{minipage}[t]{0.3\linewidth}
\centering
{\scriptsize car-roundabout}\\[3pt]
\includegraphics[width=\linewidth]{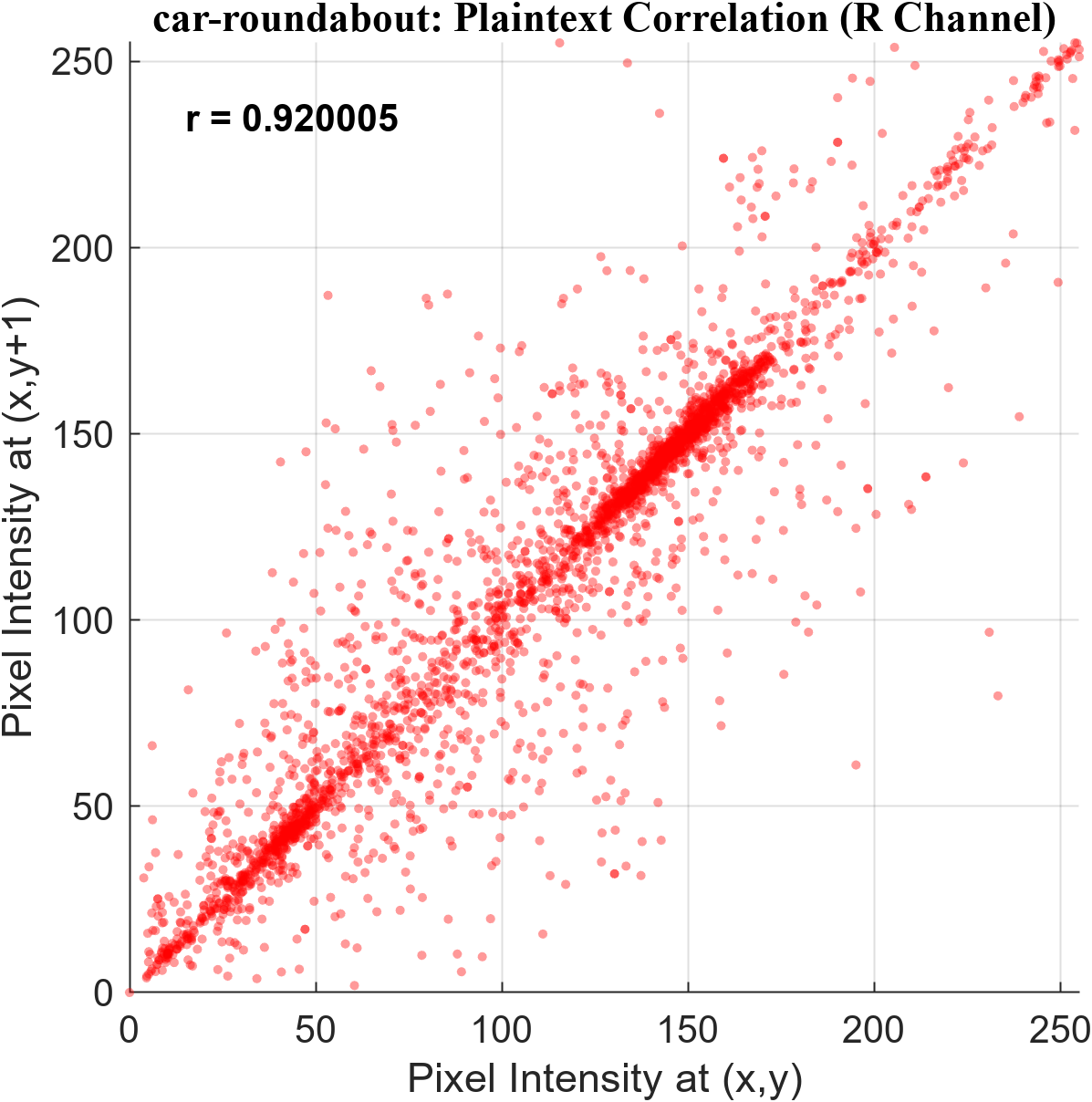}\\[-1pt]
\includegraphics[width=\linewidth]{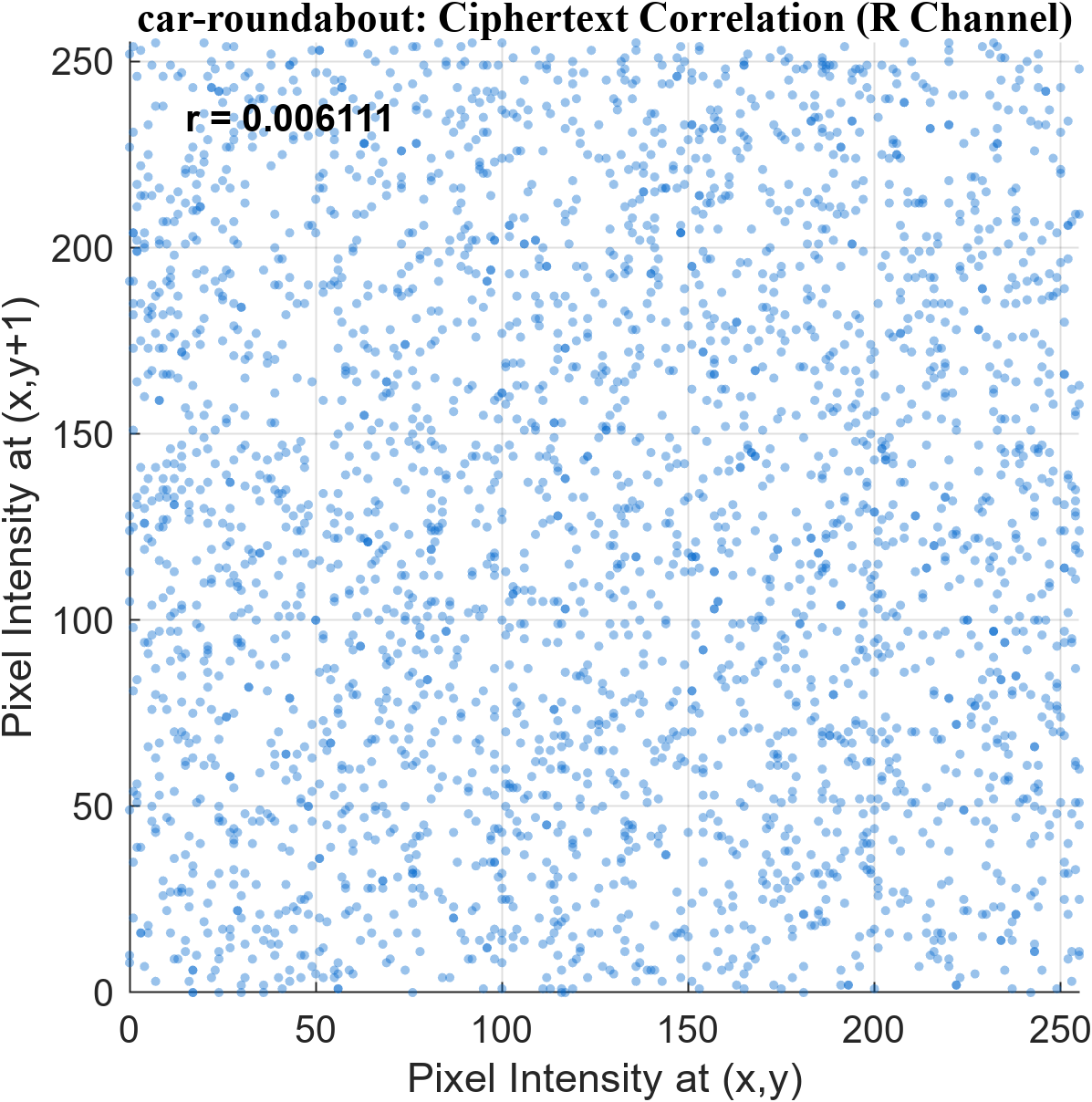}
\end{minipage}

\vspace{6pt}

% ---------- Second row: two videos ----------
\makebox[\linewidth][c]{%
\begin{minipage}[t]{0.3\linewidth}
\centering
{\scriptsize dance-twirl}\\[3pt]
\includegraphics[width=\linewidth]{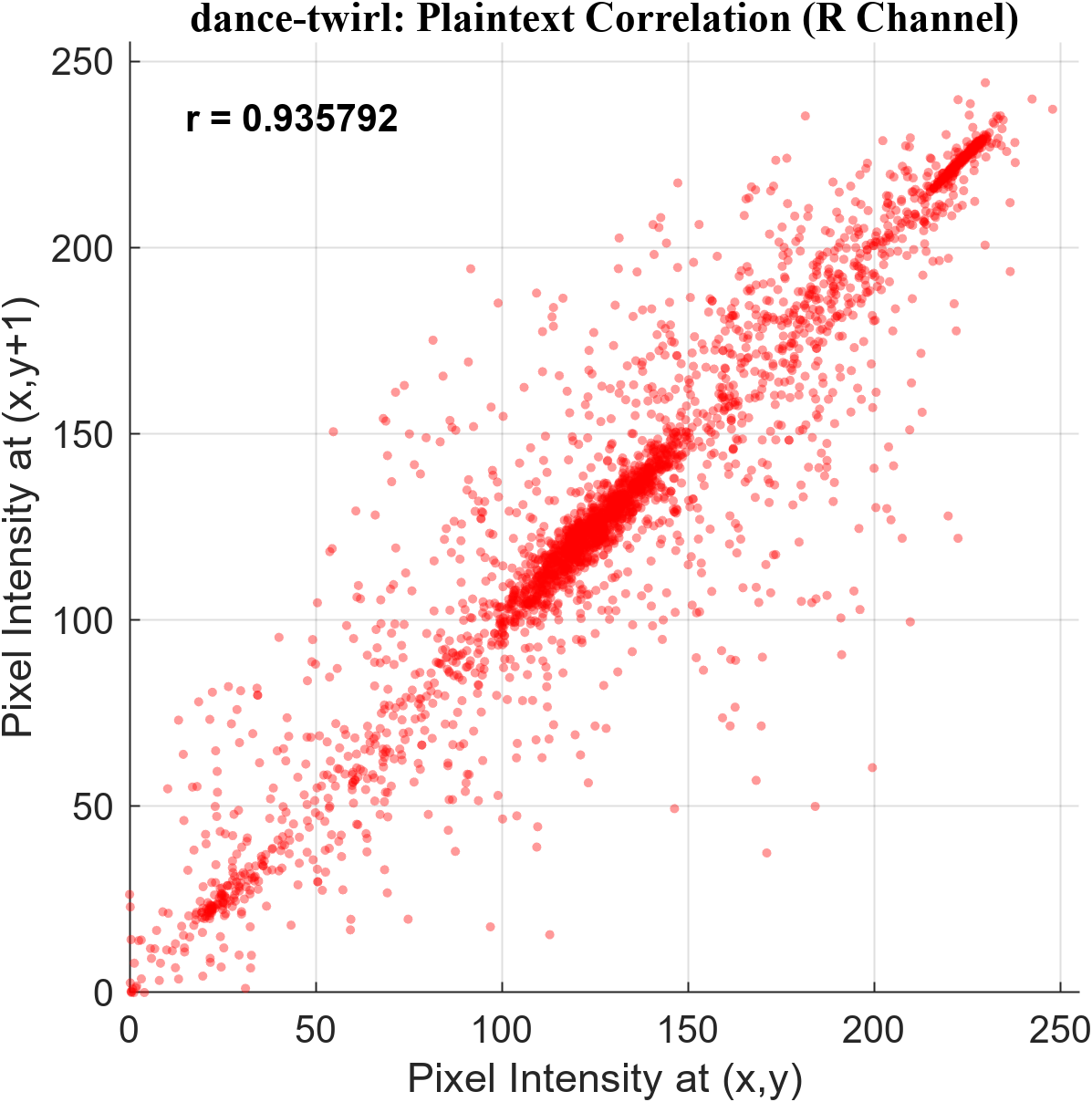}\\[-1pt]
\includegraphics[width=\linewidth]{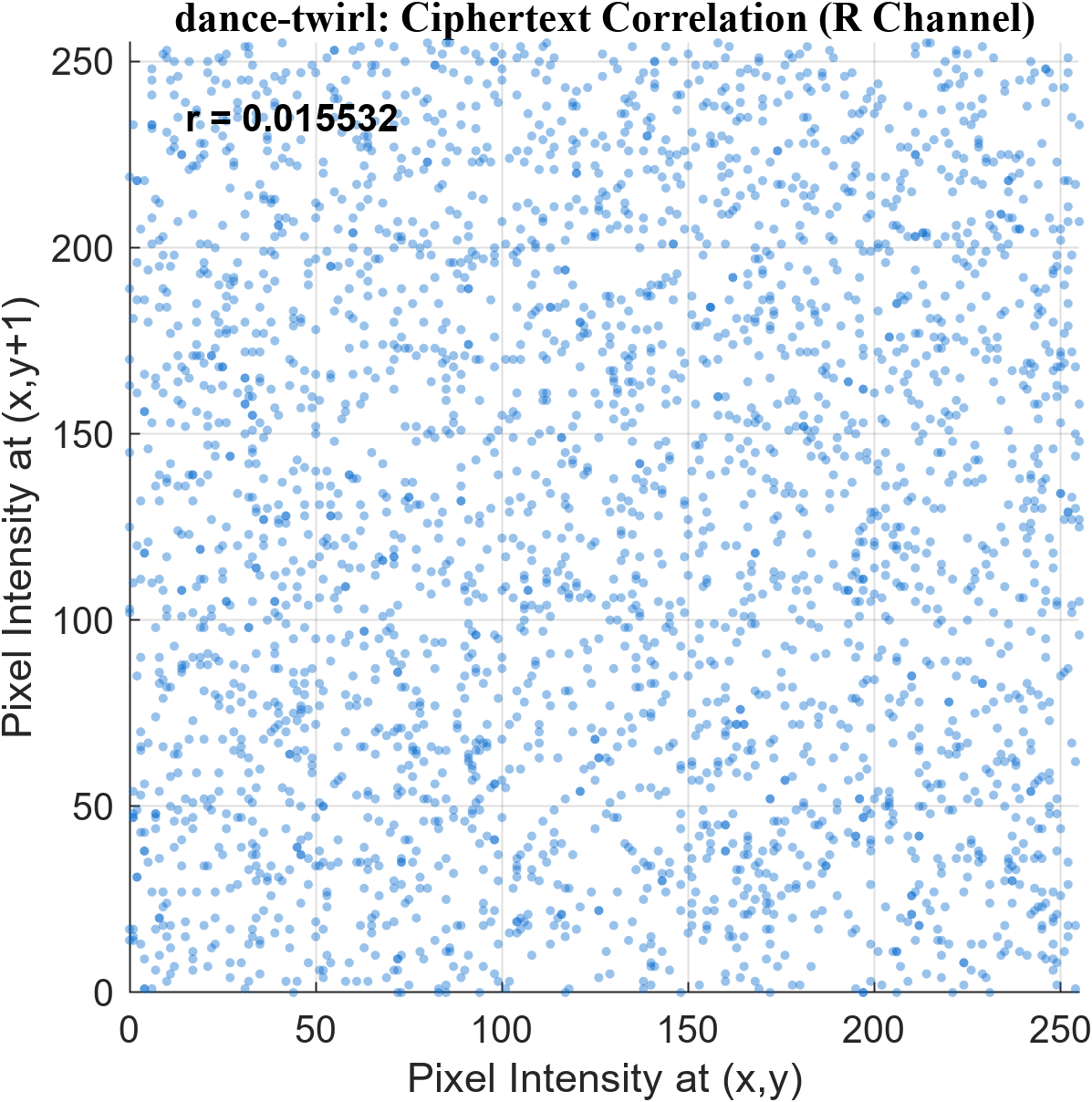}
\end{minipage}
\hspace{0.03\linewidth}
\begin{minipage}[t]{0.3\linewidth}
\centering
{\scriptsize dog}\\[3pt]
\includegraphics[width=\linewidth]{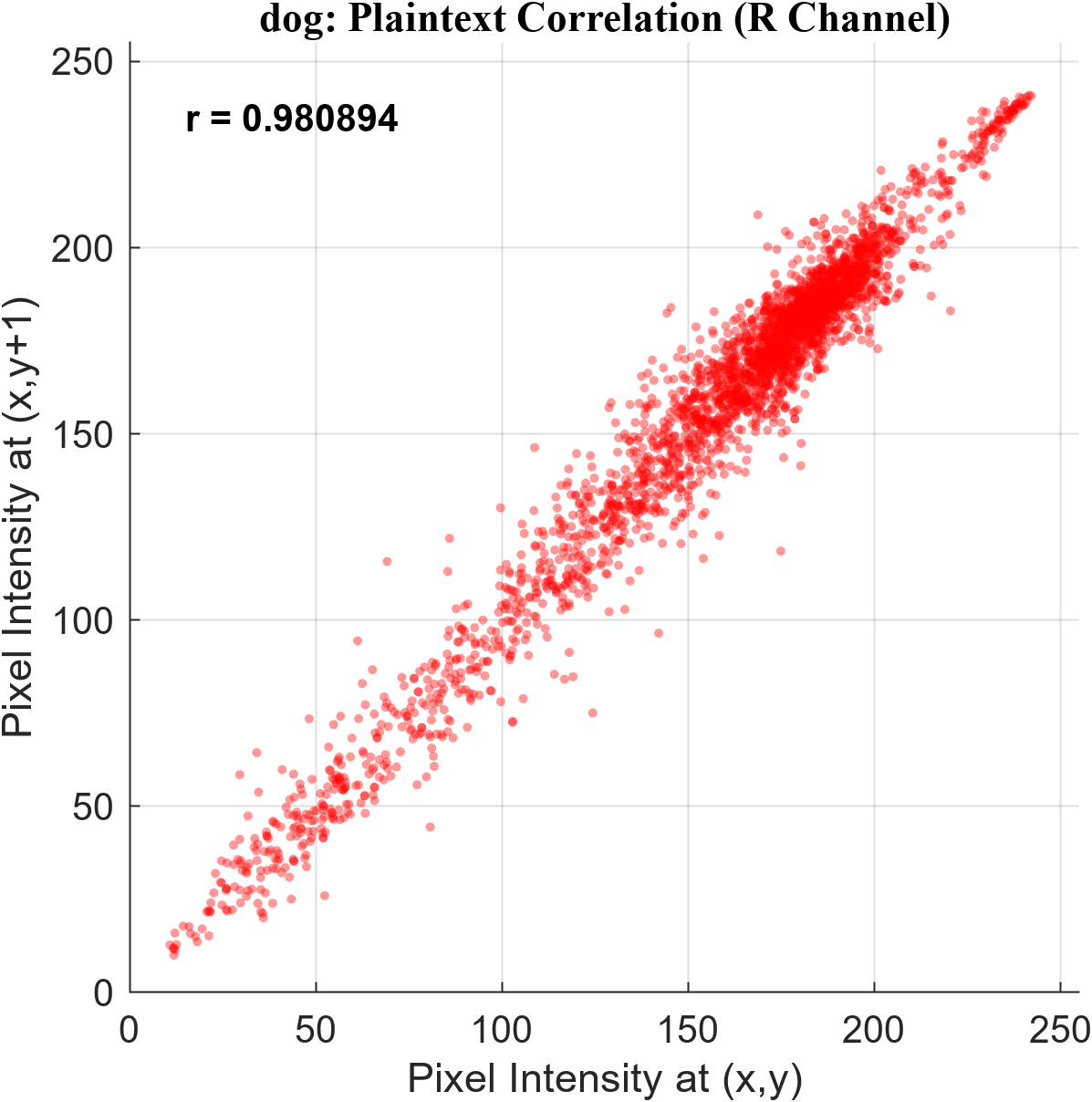}\\[-1pt]
\includegraphics[width=\linewidth]{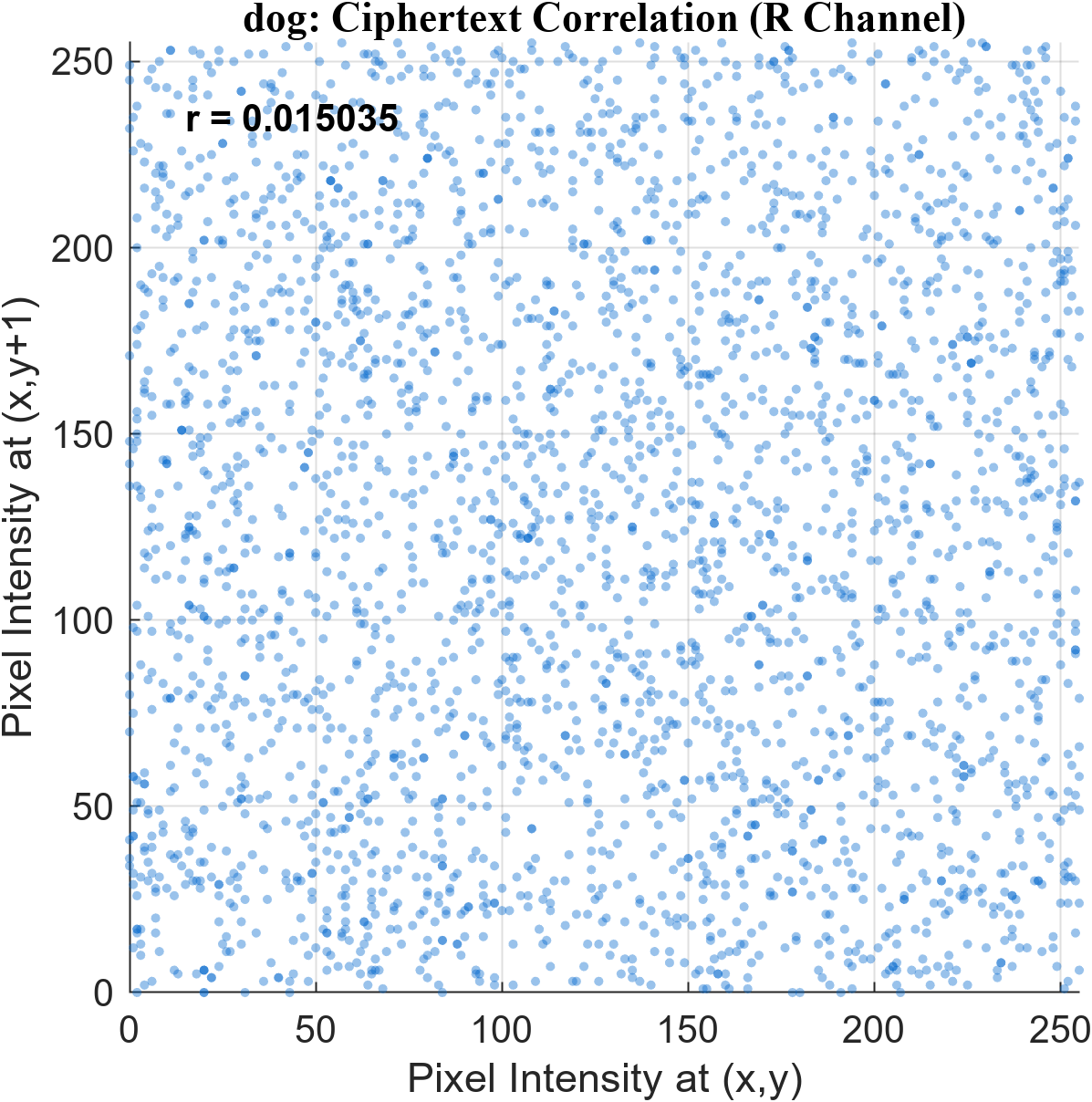}
\end{minipage}
}

\caption{QT-SVD adjacent-pixel correlation plots at frame 25 (R channel).}
\label{fig:correlations}
\end{figure}

Table~\ref{tab:security} compares the two methods quantitatively, where
$r_{\rm cipher}$ denotes the Pearson correlation coefficient computed from
3,000 horizontally adjacent pixel pairs in the encrypted R channel at
frame~25. The five-video mean absolute values of $r_{\rm cipher}$ are
$0.0119$ for QT-SVD and $0.0127$ for T-SVD
(Table~\ref{tab:securitysummary}). Neither method yields a smaller absolute
correlation for all five videos, indicating no consistent advantage of one
method over the other in terms of the measured horizontal adjacent-pixel
correlation.
\begin{table}[!htbp]
\centering\small
\caption{Statistical comparison for the five videos.}
\label{tab:security}
\setlength{\tabcolsep}{4pt}
\begin{tabular}{lrrrrrr}
\toprule
Video
& \multicolumn{2}{c}{NPCR (\%)}
& \multicolumn{2}{c}{UACI (\%)}
& \multicolumn{2}{c}{$r_{\rm cipher}$} \\
\cmidrule(lr){2-3}
\cmidrule(lr){4-5}
\cmidrule(lr){6-7}
& QT-SVD & T-SVD
& QT-SVD & T-SVD
& QT-SVD & T-SVD \\
\midrule
blackswan
& 99.6069 & 99.6144
& 33.4454 & 33.4740
& 0.0098 & 0.0257 \\

bmx-trees
& 99.6038 & 99.6093
& 33.4467 & 33.4511
& -0.0131 & -0.0004 \\

car-roundabout
& 99.6087 & 99.6104
& 33.4822 & 33.4612
& 0.0169 & 0.0250 \\

dance-twirl
& 99.6118 & 99.6107
& 33.4583 & 33.4546
& 0.0155 & 0.0038 \\

dog
& 99.6103 & 99.6088
& 33.4796 & 33.4631
& 0.0150 & 0.0037 \\
\bottomrule
\end{tabular}
\end{table}

\begin{table}[!htbp]
\centering\small
\caption{Equal-video summary of ciphertext statistics.}
\label{tab:securitysummary}
\setlength{\tabcolsep}{4pt}
\begin{tabular}{lrrr}
\toprule
Metric & QT-SVD & T-SVD & Random reference \\
\midrule
NPCR (\%) & 99.6083 & 99.6107 & 99.6094 \\
UACI (\%) & 33.4624 & 33.4608 & 33.4635 \\
Mean $|r_{\rm cipher}|$ & 0.0119 & 0.0127 & 0 \\
\bottomrule
\end{tabular}
\end{table}

\subsubsection{Differential Sensitivity}
As shown in Table~\ref{tab:securitysummary}, both methods yield mean NPCR and
UACI values close to the theoretical reference values of $99.6094\%$ and
$33.4635\%$, respectively. This indicates that a small plaintext perturbation produces widespread changes in the encrypted core, with comparable differential sensitivity for QT-SVD and T-SVD.

\subsection{Stability under Pre-Encryption Input Perturbations}
Three perturbations are applied to the clean RGB videos before decomposition: independent salt-and-pepper noise of density $0.01$, zero-mean Gaussian noise of variance $0.01$ in normalized intensity units, and a centered $12\times12$ zero occlusion in every frame and color channel. At the tested resolution the occlusion occupies rows 123--134 and columns 123--134. The random generator is reset to seed 20260716 for paired method comparisons, producing identical perturbed inputs.

Each perturbed video is decomposed afresh, its core is encrypted and recovered, and the video is reconstructed using its corresponding factors. Both the perturbed input and reconstructed output are compared against the same clean video. With $P$ denoting PSNR and $S$ denoting SSIM, define $\Delta P=P_{\rm out}-P_{\rm in}$ and $\Delta S=S_{\rm out}-S_{\rm in}$. This paired design isolates the additional quality change introduced by the processing pipeline from the distortion already present in its input.
\begin{table}[!htbp]
\centering\small
\caption{Paired pre-encryption perturbation results, averaged equally over the five videos.}
\label{tab:robustness}
\setlength{\tabcolsep}{4pt}
\begin{tabular}{llrrrrr}
\toprule
Perturbation & Method & $P_{\rm in}$ & $P_{\rm out}$ & $\Delta P$ & $S_{\rm in}$ & $S_{\rm out}$ \\
\midrule
Salt-and-pepper & QT-SVD & 25.2034 & 25.2067 & +0.0033 & 0.8295 & 0.8294 \\
Salt-and-pepper & T-SVD & 25.2034 & 25.1998 & -0.0036& 0.8295 & 0.8268 \\
\midrule
Gaussian & QT-SVD & 20.2122 & 20.2128 & +0.0006 & 0.4223 & 0.4223 \\
Gaussian & T-SVD & 20.2122 & 20.2103 & -0.0019 & 0.4223 & 0.4222 \\
\midrule
Occlusion & QT-SVD & 33.2326 & 33.2299 & -0.0027 & 0.9961 & 0.9959 \\
Occlusion & T-SVD & 33.2326 & 33.0139 & -0.2187 & 0.9961 & 0.9894 \\
\bottomrule
\end{tabular}
\par\smallskip\begin{minipage}{\linewidth}\footnotesize $P$ denotes PSNR (dB), $S$ denotes SSIM, and $\Delta P=P_{\rm out}-P_{\rm in}$. Both input and output are compared with the same clean reference video. Identical perturbed inputs are used by the two methods.\end{minipage}
\end{table}

For QT-SVD, the mean PSNR changes are $+0.003292$, $+0.000558$, and $-0.002790$~dB for salt-and-pepper noise, Gaussian noise, and occlusion, respectively. Its corresponding mean SSIM changes are $-0.000094$, $+0.000015$, and $-0.000205$. Thus, the additional average distortion is small under each tested input perturbation. The slight positive PSNR changes do not by themselves demonstrate a denoising capability.

The two methods show little change under salt-and-pepper and Gaussian
perturbations, whereas a clearer difference emerges under occlusion.
As shown in Table~\ref{tab:robustness}, under occlusion, the mean PSNR
decreases by $0.0027$ dB for QT-SVD and $0.2187$ dB for T-SVD, while
the corresponding mean SSIM decreases are $0.0002$ and $0.0067$,
respectively. Thus, QT-SVD exhibits smaller degradation than T-SVD
under the tested occlusion perturbation.

These experiments concern perturbed plaintext inputs. The ciphertext is not corrupted during transmission or storage, and the factors are recomputed after perturbation. Accordingly, the results measure stability of processing already disturbed videos; they do not establish error tolerance of encrypted data or recovery of missing scene content.

% BEGIN QT-SVD IMPLEMENTATION COMPARISON

\subsection{Decomposition Time Comparison}
Table~\ref{tab:timing} compares the decomposition times of QT-SVD and T-SVD for the five video sequences. QT-SVD requires less decomposition time for every tested sequence. The equal-video mean decomposition time is 16.189~s for QT-SVD and 37.352s for T-SVD, corresponding to a reduction of approximately 56.7\%. These results demonstrate the computational efficiency of the QT-SVD implementation in the tested video decomposition tasks.

\begin{table}[htbp]
\centering
\caption{Decomposition times for the five video sequences.}
\label{tab:timing}
\begin{tabular}{lrr}
\toprule
Video & QT-SVD (s) & T-SVD (s) \\
\midrule
blackswan      & 10.456  & 14.864 \\
bmx-trees      & 18.738 & 51.945 \\
car-roundabout & 17.099 & 39.765 \\
dance-twirl    & 22.442 & 53.298 \\
dog            & 12.211 & 37.352 \\
\midrule
Five-video mean & 16.189 & 37.352 \\
\bottomrule
\end{tabular}
\end{table}

\subsection{Discussion and Limitations}
The five-sequence study supports three conclusions under the stated settings. First, both QT-SVD and T-SVD accurately reconstruct the original videos from their decomposition factors. Second, QT-SVD yields higher PSNR and SSIM after decryption in every tested video. Third, both pipelines produce similar aggregate ciphertext sensitivity and low measured R-channel adjacency correlation, while QT-SVD introduces less additional average degradation for the tested occluded inputs.

The scope is limited to five sequences resized to $256\times256$, one set of masking/permutation parameters, and the retained auxiliary information used by the prototype. No claim is made about lossless recovery of the floating-point video, rank-compression performance, ciphertext corruption tolerance, or formal cryptographic security. Broader evaluation would require additional sequences and resolutions, controlled repeated timing, and an explicit key and auxiliary-data protection model. The present evidence identifies QT-SVD as a useful representation for this video recovery pipeline and motivates those further tests.

\section*{Funding}
\begin{itemize}
\item The first author was supported by the National Natural Science Foundation of China (NSFC) (No. 11901079), and China Postdoctoral Science Foundation (No. 2021M700751), and the Scientific and Technological Research Program Foundation of Jilin Province (No. JJKH20190690KJ; No. JJKH20220091KJ; No. JJKH20250851KJ).
% \item The fourth author is supported by the Ministry of Science, Technological Development and Innovations, Republic of Serbia (No. 451-03-137/2025-03/200124).
\end{itemize}

\section*{Conflict of Interest}
The authors declare that they have no potential conflict of interest.

\section*{Data Availability}
Data sharing is not applicable to this article, as no datasets were generated or analyzed during the current study.

\section*{Acknowledgments}

The authors are grateful to Michael K. Ng of Hong Kong Baptist University for his valuable suggestions, which have helped improve the quality of this paper.

\bibliographystyle{abbrv}
%\bibliography{mybibfile}

\end{document}